\tikzstyle{dmatrix}=[matrix of math nodes,row sep=2.5em, column sep=2.5em,
\DeclareSymbolFont{epsilon}{OML}{ntxmi}{m}{it}
\DeclareMathSymbol{\epsilon}{\mathord}{epsilon}{"0F}
\theoremstyle{plain}
\newtheorem{theorem}{Theorem}[section]
\newtheorem{lemma}[theorem]{Lemma}
\newtheorem{prop}[theorem]{Proposition}
\newtheorem{cor}[theorem]{Corollary}
\newtheorem{prop/Def}[theorem]{Propsition/Definition}
\newtheorem{theorem/Def}[theorem]{Theorem/Definition}
\theoremstyle{definition}
\newtheorem{Def}[theorem]{Definition}
\newtheorem{rem}[theorem]{Remark}
\newtheorem{exa}[theorem]{Example}
\def \Q {{\mathbb Q}}
\def \R {{\mathbb R}}
\def \Z {{\mathbb Z}}
\def \D {{\operatorname{D}}}
\def \S {{\mathbb S}}
\def \X {{ \mathcal{X}}}
\def \T {{\mathbb T}}
\def \div {{\operatorname{div}}}
\def \vol {{ \operatorname{vol}}}
\def \codim {{\operatorname{codim}}}
\newcommand{\on}[1]{\operatorname{#1}}
\newcommand{\ca}[1]{{\mathcal{#1}}}
\def \div {{ \operatorname{div}}}
\def \Spec {{ \operatorname{Spec}}}
\def \rec {{\operatorname{rec}}}
\def \CH {{\operatorname{CH}}}
\def \closed {{\operatorname{closed}}}
\def \closed {{\operatorname{closed}}}
\tikzstyle{dmatrix}=[matrix of math nodes,row sep=2.5em, column sep=2.5em,
\numberwithin{equation}{section}
\title{Equivariant non-archimedean Arakelov theory of toric varieties}
\author{Ana Mar\'ia Botero}
\date{}
\thanks{The author was supported by the by the Deutsche Forschungsgemeinschaft (DFG, German Research Foundation) – Project-ID 491392403 – TRR 358 and by collaborative research 
center SFB 1085. }
\subjclass{14M25; 14G40; 14G20; 14L30; 14C17; 14F43; 52A41}
\keywords{Arakelov theory, toric varieties, toric schemes over a DVR, equivariant Chow groups, polyhedral complexes, b-divisors}
\begin{document}

\maketitle

\begin{abstract}
We develop an equivariant version of the non-archimedean Arakelov theory of \cite{BGS} in the case of toric varieties.  We define the equivariant analogues of the non-archimedean differential forms and currents appearing in \emph{loc.~cit.} and relate them to piecewise polynomial functions on the polyhedral complexes defining the toric models. In particular, we give combinatorial characterizations of the Green currents associated to equivariant cycles and combinatorial descriptions of the arithmetic Chow groups.
\end{abstract}

\tableofcontents

\section{Introduction}

Let $X$ be a smooth projective variety over a number field $K$. Classical Arakelov theory considers algebraic cycles and vector bundles on a fixed regular model $\X$ of $X$ over the ring of integers of $K$, together with analytic data on the complex points of $X$ (\cite{arakelov1}, \cite{arakelov2}, \cite{Fa}, \cite{GS1}, \cite{GS2}). Adding this data can be thought of as a \emph{compactification} of $\X$ and leads to an arithmetic intersection theory on $X$. Note that this is in no way a compactification in the topological sense, only in the sense that is nice enough in order to have a well-defined intersection theory. 

Now, it is an old dream to handle archimedean and non-archimedean places in
a similar way. This means that we are looking for a description in terms of forms and
currents for the contributions of the non-archimedean places. Such a non-archimedean Arakelov theory at finite places was developed
by Bloch–Gillet–Soul\'e in \cite{BGS}, based on the Chow groups of \emph{all} regular proper models over the valuation ring. A drawback of this theory is that it relies strongly on the conjectured existence of resolution of singularities for models in mixed characteristics. Nevertheless, under this hypothesis, it establishes a complete dictionary between objects in differential geometry and intersection theory. For example, for any integer $k \geq 0$, the authors define non-archimedean analogues of the four groups: closed $(k,k)$-forms, $(k,k)$-forms modulo the image of $\partial$ and $\bar{\partial}$, closed $(k,k)$-currents and $(k,k)$-currents modulo the image of $\partial$ and $\bar{\partial}$. The non-archimedean counterparts are appropriate direct and inverse limits of Chow cycle groups and operational Chow cohomology groups of special fibers of the regular models.  They also define a non-archimedean analogue of the $dd^c$ operator in complex differential geometry  and show that these groups fit into natural exact sequences analogous to the ones occurring in the archimedean case.

The present article studies an equivariant version of this theory in the case of toric varieties over $K$, for which resolution of singularities is always satisfied.  There are two main objectives. The first one is to give a combinatorial characterization of the the objects involved. This provides convex geometrical and combinatorial tools to study non-archimedean Arakelov theory on toric varieties over $K$. These tools are general and explicit enough and this leads us to our second objective: to provide a large testing class where dealing with the non-archimedean analogue of singular metrics seems possible. It thus serves as a testing ground if one wishes to extend results in Arakelov geometry to consider singular metrics. 

Let us be a bit more precise. 
As we will see, this theory suggests that the non-archimedean analogue of a (smooth) toric metric on a toric vector bundle is the choice of a toric model and a piecewise polynomial function on the polyhedral complex associated to the toric model (see discussion after Theorem \ref{th:intro5}). This generalizes \cite{BPS}, where toric \emph{model} metrics on toric line bundles correspond to piecewise affine functions on the polyhedral complex.  

This gives a combinatorial interpretation of the first equivariant Chern form of such a metrized toric vector bundle. 

Potentially, the above analogy can be generalized to include singular metrics, hence allowing the study of equivariant Chern \emph{currents} on toric varieties over $K$ by combinatorial methods. This is explained in more detail below. 
 
The fact that one can mimic forms and currents on archimedean places by taking direct and inverse limits of Chow groups of regular models goes in line with recent attempts of several authors to view the first Chern class of a line bundle on a smooth projective variety $X$, endowed with a semi-positive singular metric, as a so called \emph{Weil b-divisor} encoding the singularities of the metric. Recall that a Weil b-divisor on $X$ is a tower of divisors indexed by modifications of $X$, compatible under pushforward. This is done by taking into account the so-called \emph{Lelong} numbers of the local potentials defining the singular semi-positive metric (see \cite{chern-weil} for details). If the psh metric is good enough, for example, if it has algebraic singularities, then this tower stabilizes and the resulting b-divisor lives in the direct limit. This is then an example of a \emph{Cartier} b-divisor.  Such an attempt was implemented in the archimedean case in \cite{chern-weil} and \cite{siegel-jacobi} leading to Chern-Weil type formulae. In the non-archimedean case, Boucksom, Favre and Jonsson gave an approach to non-archimedean plurisubharmonic functions via skeletons in the case of residue characteristic $0$ (see \cite{BFJ} and \cite{BFJ:valuations}).  

Another approach towards handling archimedean and non-archimedean places in the same way is an analytic description of non-archimedean Arakelov theory, based on a theory of differential forms and currents on Berkovich analytic spaces. It was initiated by Thuillier in his thesis, where he developed pluripotential theory on the Berkovich analytic space associated to a curve over a non-archimedean field. This was generalized to higher dimensions by Chambert--Loir and Ducros in \cite{CLD}, where they developed a theory of real differential forms and currents on Berkovich analytic spaces. The theory gives rise to classical formula such as the Poincaré--Lelong formula and it has the advantage that it is unconditional and does not relay on any conjecture on resolution of singularities. The theory of Chambert--Loir and Ducros has been generalized to allow a larger class of differential forms (see e.\,g.\,\cite{GK} and \cite{Mih}). 

In fact, the approach using Berkovich spaces can be thought of as a refined approach of Bloch--Gillet--Soulé, since Berkovich spaces are topological inverse limits of its skeleta, and the skeleta can be thought of as the (dual complexes) of the special fibres of SNC models.

The main reason for working with Bloch--Gillet--Soulé's approach in the present article instead of Chambert--Loir and Ducros, is that our main purpose is to exploit the torus action and to develop an \emph{equivariant} version of non-archimedean Arakelov theory. For this, we need either a theory of equivariant Chow groups, or a theory of equivariant differential forms on Berkovich spaces. Since the former has been developed by Edidin and Graham in \cite{EG}, and, at least from the author's knowledge, the latter has not, we chose to work with the existing equivariant theory. 
It would be interesting to develop an equivariant theory of differential forms on Berkovich spaces, and to compare it with the current theory in the case of toric varieties. 

Before we state our main results, we recall the main definitions and statements from \cite{BGS} in more detail. 

The setting is as follows. Let $R$ be an excellent discrete valuation ring with quotient field $K$ and residue field $\kappa$. Let $X$ be a smooth projective variety of dimension $n$ over $K$. Set $S = \on{Spec}(R) = \{\eta, s\}$, where $\eta$ and $s$ denote the generic and the special point, respectively. A \emph{model} of $X$ over $S$ is a proper and flat $S$-scheme $\X$ together with an isomorphism of the generic fiber $\X_{\eta} \simeq X$. Assume resolution of singularities of models of $X$ over $S$ (see \cite[pg.~432]{BGS} for the precise definition).  In particular this means that any model is dominated by an snc model (i.e. a model whose reduced special fiber is a simple normal crossings divisor). Hence the set $R(X)$ of snc models of $X$ is endowed with a directed set structure. 

For any $S$-scheme $\X$ one considers Chow homology groups $\CH_k(\X)$ and Fulton's operational Chow cohomology groups $\CH^k(\X)$ (\cite[Chapters 17, 20]{fulint}). Note that in the present article, as well as in \cite{BGS} and in \cite{chow-toric}, an \emph{absolute} notion of dimension is considered (see Definition \ref{def:abs-dim} and the remark after), which is independent of $S$. This differs from the \emph{relative} notion in \cite{fulint} by $+1$. The operational Chow cohomology group $\CH^*(\X)$ carries a graded ring structure, and taking cap products, $\CH_*(\X)$ becomes a graded $\CH^*(\X)$-module. 

For models in $R(X)$, the authors assume that Poincaré duality holds, i.e.~that capping with the fundamental class 
\[
\CH^{k}(\X) \longrightarrow \CH_{n+1-k}(\X), \; c \longmapsto c \cap [\X]
\]
is an isomorphism.  Apparently, this holds if one considers Chow groups tensored with $\Q$ by the arguments in Kleiman in \cite{KT} (see comment in \cite[pg.\,433]{BGS}).

We will show explicitly in Theorem \ref{th:dual2} that Poincaré duality holds in our equivariant setting for toric models over $S$.

Now, if $\X$ is an snc model of $X$ then it turns out that the Chow groups of the special fiber, seen as a (not necessarily irreducible nor reduced) scheme over $\kappa$ can be computed in terms of Chow groups of smooth varieties. Explicitly, write $\X_s^{[i]}$ for the disjoint union of $i$-fold intersections of irreducible components of $\X_s$. The Chow homology groups $\CH_k\left(\X_s\right)$ fit into an exact sequence 
\[
\CH_k\left(\X_s^{[2]}\right) \longrightarrow \CH_k\left(\X_s^{[1]}\right) \longrightarrow \CH_k\left(\X_s\right) \longrightarrow 0.
\]
Similarly, the operational Chow cohomology groups $\CH^k\left(\X_s\right)$ fit into an exact sequence 
\[
0 \longrightarrow \CH^k\left(\X_s\right) \longrightarrow \CH^k\left(\X_s^{[1]}\right) \longrightarrow \CH^k\left(\X_s^{[2]}\right).
\]
Both groups are co-and contravariant for maps of special fibers induced by maps of models. Taking direct and inverse limits, one obtains the four groups $\widetilde{A}^{k,k}(X), A^{k,k}_{\closed}(X), \widetilde{D}^{k,k}(X)$ and $D^{k,k}_{\closed}(X)$ of closed $(k,k)$-forms, $(k,k)$-forms modulo the image of $\partial$ and $\bar{\partial}$, closed $(k,k)$-currents and $(k,k)$-currents modulo the image of $\partial$ and $\bar{\partial}$, respectively.

For an snc model $\X$ in $R(X)$ let $\iota\colon \X_s \hookrightarrow \X$ denote the inclusion of the special fiber. Consider the map 
\[
\iota^*\iota_* \colon \CH_{n-k}(\X_s) \longrightarrow \CH^{k+1}(\X_s)
\]
consisting of pushing cycles forward from the special fiber to the model, then using Poincaré duality, and then pulling back. 

The first main result in \cite{BGS} states that the kernels of $\iota^*\iota_*$ for all choices of models $\X$ are isomorphic. And similar for the cokernels. We will see that in our toric equivariant setting, the existence of a toric canonical model, which is smooth whenever the toric variety we start with is smooth, implies that we can explicitly describe $\on{ker}(\iota^*\iota_*)$.  Moreover, in this case, it is isomorphic to $\on{coker}(\iota^*\iota_*)$.

Now, the map $\iota^*\iota_*$ is compatible with maps of special fibers induced by maps of models, hence one can define limit versions of this map:
\[
     dd^c \colon \widetilde{A}^{k,k}(X) \longrightarrow A_{\closed}^{k+1,k+1}(X)
   \quad \text{and} \quad
      dd^c \colon \widetilde{D}^{k,k}(X) \longrightarrow D_{\closed}^{k+1,k+1}(X). 
\]
which are shown to satisfy some regularity properties. 
These $dd^c$ maps are non-archimedean analogues of the $dd^c$ maps appearing in complex differential geometry. 

Let $Z \subseteq X$ be an algebraic cycle of codimension $k$. A \emph{Green current} for $Z$ is an element $g_Z \in \widetilde{D}^{k,k}(X)$ such that 
\[
dd^c(g_Z) + \delta_Z \in A_{\closed}^{k,k}(X).
\]
Here, $\delta_Z \in D^{k,k}_{\closed}(X)$ is the projective system consisting of the Zariski closures of $Z$ in the snc models of $X$. 

Finally, the $k$'th arithmetic Chow group $\widehat{\CH}^k(X)$ is defined mimicking the classical arithmetic Chow groups: generators are pairs of the form $(Z, g_Z)$, where $Z$ is an algebraic cycle of codimension $k$ on $X$ and $g_Z$ is a Green current for $Z$, and relations are given by rational equivalence. 

The second main result in \cite{BGS} states that there is a canonical isomorphism 
\begin{equation*}
\widehat{\CH}^*(X) = \varinjlim_{\X}\CH^*(\X).
\end{equation*}
The graded algebra structure on $\CH^*(\X)$ for any model $\X$ induces a graded algebra structure on the direct limit. Hence, via the above isomorphism, $\widehat{\CH}^*(X)$ inherits a graded algebra structure. 

One can interpret this result as saying that the non-archimedean analogue of choosing a (smooth) hermitian metric on a vector bundle $E$ on $X$ is the choice of a snc model $\X$ of $X$ together with an extension of $E$ to a vector bundle $\mathfrak{E}$ on $\X$ (see Remark \ref{rem:direct-toric}).

Finally, in \cite{GS-direct} the authors consider an extended version of the arithmetic Chow group $\widecheck{\CH}^k(X)\supseteq \widehat{\CH}^k(X)$. Generators are pairs of the form $(Z,g)$, where $Z$ is an algebraic cycle of codimension $k$ in $X$, and $g$ is any current in $\widetilde{D}^{k,k}(X)$ (not necessarily a Green current). Relations are given again by rational equivalence. It is shown in \emph{loc.~cit.~} that there is a canonical isomorphism 
\begin{equation*}
\widecheck{\CH}^*(X) \simeq \varprojlim_{\X}\CH^*(\X).
\end{equation*}

Note that since the pushforward is a map of groups, not of rings, there is a priori no ring structure on $\widecheck{\CH}^*(X)$.  However, one can think of elements in $\widecheck{\CH}^*(X)$ as some kind of arithmetic $b$-cycles and, it seems possible to define an intersection pairing if one is willing to restrict to some kind of ``positive'' elements. We will pursue this idea in the future. 

We remark that one can interpret the isomorphism above as saying that the non-archimedean analogue of choosing a (singular) hermitian metric on a vector bundle $E$ on $X$ is the choice of a tower of extensions $\left\{\mathfrak{E}_{\X}\right\}_{\X}$, where $\mathfrak{E}_{\X}$ is a vector bundle on $\X$, extending $E$, whose Chern classes, defined in the Chow groups of the special fibers, are compatible under pushforward (see Remark \ref{rem:proj}). 

\subsection{Statement of the main results.}
Let $R$,\,$K$,\,$\kappa$ and $S$ as before. Assume that $\kappa = \overline{\kappa}$ is algebraically closed. Let $\T_S$ be a split torus over $S$. We denote by $\T_K$ and by $\T{\kappa}$ the base
change to $\Spec(K)$ and to $\Spec(\kappa)$, respectively. Let $N$ be the lattice of cocharacters of $\T_K \simeq (K^*)^n$ and set $N_{\R} = N \otimes_{\Z} \R$. Let $X=X_{\Sigma}$ be a smooth projective toric variety of dimension $n$ over $K$ with corresponding (regular, complete) fan $\Sigma$ in $N_{\R}$. 

In this case, one can consider so called \emph{toric} models of $X$ over $S$ (see Definition~\ref{def:toric-model}). As it turns out, regular proper toric models of $X$ over $S$ are in bijection with regular, complete, strongly convex, rational polyhedral complexes $\Pi$ in $N_{\R}$ having recession fan equal to $\Sigma$ (we refer to Section \ref{sec:toric-schemes} for details). From now on, we will abbreviate \emph{strongly convex, rational} with "SCR". Given such a toric model $\X_{\Pi}$ corresponding to a SCR polyhedral complex $\Pi$, the special fiber $\X_{\Pi,s}$ is then a union of toric varieties glued in a way which is dictated by the combinatorics of the complex $\Pi$.  Moreover, consider $c(\Pi)$ in $N_{\R} \oplus \R_{\geq 0}$, the \emph{cone of} $\Pi$. This defines a rational polyhedral fan in $N_{\R} \oplus \R_{\geq 0}$, whose cones correspond (in order reversing correspondence) to torus orbits. We denote by $V(\sigma)$ the Zariski closure of the torus orbit corresponding to a cone $\sigma \in c(\Pi)$. If $\sigma \subseteq N_{\R} \times \{0\}$, then $V(\sigma)$ is a horizontal cycle, i.e. the structure morphism $V(\sigma) \to \S$ is dominant. Otherwise, it is vertical, i.e. it is contained in the special fibre.  

As we will see, an equivariant map of toric models corresponds to a map between SCR polyhedral complexes. The fact that given such a complex, one can always subdivide it into a regular one, implies that we always have resolution of singularities of toric models. In particular, the set $R(\Sigma)$ consisting of regular proper toric models of $X$ is directed. 

We shall consider the category of $\T_S$-schemes over $S$,  i.e. integral separated $S$-schemes of finite type $\X$ with a torus $\T_K \hookrightarrow \X_{\eta}$ and an $S$-action of $\T_S$ over $\X$ that extends the action of $\T_K$ on itself by translations. For a $\T_S$-scheme $\X$, we consider the equivariant Chow homology groups $\CH_*^{\T_S}(\X)$ and the equivariant operational Chow cohomology groups $\CH^*_{\T_S}(\X)$. These groups are introduced and studied in \cite[Section 6.2]{EG} for more general group actions. For the case of torus actions over a field, they have been studied in \cite{Brion-equi} (see also \cite{Gonz}). We recall the definitions in Section \ref{sec:eq-chow}.  As in the non-equivariant setting, the group $\CH_{\T_S}^*(\X)$ carries a graded algebra structure. 

Let $\widetilde{S}$ be the equivariant Chow homology group of our base scheme $S$. If $\X = \X_{\Pi}$ is a toric scheme corresponding to a SCR complex $\Pi$, then we show in Proposition~\ref{prop:chow-hom} that $\CH_*^{\T_S}(\X)$ is an $\widetilde{S}$-module, defined by generators $\left[V(\sigma)\right]$ for $\sigma \in c(\Pi)$. 
We also have a combinatorial description of the cohomology groups. Let $PP^*\left(c(\Pi)\right)$ denote the ring of piecewise polynomial functions on $c(\Pi)$ (see Section \ref{sec:toric} for the definition of this ring). It has the structure of an $\widetilde{S}$-module. The following is Theorem~\ref{th:equi-pol} in the text.
\begin{theorem}\label{th:intro1}
There is an $\widetilde{S}$-module isomorphism 
\begin{eqnarray}\label{eq:pp}
\CH^*_{\T_S}(\X) \simeq PP^*\left(c(\Pi)\right).
\end{eqnarray}
\end{theorem}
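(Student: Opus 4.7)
The plan is to adapt Payne's theorem, which identifies the equivariant operational Chow cohomology of a toric variety over a field with the ring of piecewise polynomial functions on the defining fan, to our mixed-characteristic setting over $S$. Since the toric scheme $\X_\Pi$ corresponds combinatorially to the fan $c(\Pi) \subseteq N_\R \oplus \R_{\geq 0}$ in order-reversing bijection with its $\T_S$-orbits, one expects the equivariant operational Chow ring to be controlled precisely by piecewise polynomial data on $c(\Pi)$.

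First I would construct the natural map $\Phi \colon PP^*(c(\Pi)) \to \CH^*_{\T_S}(\X_\Pi/S)$. The toric scheme $\X_\Pi$ admits an invariant affine open cover by charts $U_\sigma$ for $\sigma \in c(\Pi)$. For each such $\sigma$, the localization to the unique closed orbit inside $U_\sigma$ should identify $\CH^*_{\T_S}(U_\sigma/S)$ with the symmetric algebra $\on{Sym}(M_\sigma^\vee)$ of polynomial functions on $\sigma$, where $M_\sigma$ denotes the character lattice of the stabilizer at the closed orbit; this is the local building block and is essentially Brion's computation tensored up over the base ring $\widetilde{S}$ (which encodes the extra factor $\R_{\geq 0}$). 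Given a piecewise polynomial $f \in PP^*(c(\Pi))$, its restrictions $f_\sigma$ yield operational classes on each $U_\sigma$ which, by the compatibility condition built into the definition of $PP^*$, agree on intersections $U_\sigma \cap U_\tau = U_{\sigma \cap \tau}$ and hence glue to a well-defined equivariant operational class on $\X_\Pi$.

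For bijectivity I would follow the inductive strategy of Payne, proving the statement by induction on the number of maximal cones of $c(\Pi)$. The induction step uses a Mayer-Vietoris sequence for the $\T_S$-invariant open cover $\X_\Pi = (\X_\Pi \setminus V(\sigma_{\max})) \cup U_{\sigma_{\max}}$ for a chosen maximal cone, combined with the fact that operational classes are detected by their restrictions to invariant affine charts. Injectivity follows from the affine case, while surjectivity uses the gluing lemma: a collection of equivariant operational classes on the $U_\sigma$ compatible on overlaps comes from a class on $\X_\Pi$, and by the local identification with polynomials the overlap compatibility is exactly the piecewise polynomial condition. That $\Phi$ respects the graded algebra structure is immediate from the construction, since both the product in $PP^*(c(\Pi))$ and the cup product in $\CH^*_{\T_S}$ are defined chart-wise.

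The main obstacle will be the local calculation over $S$, namely establishing that for each affine toric chart $U_\sigma$ the equivariant operational ring $\CH^*_{\T_S}(U_\sigma/S)$ is the symmetric algebra of polynomial functions on $\sigma$. Over a field this is classical, but over $S$ one must account for the base $\widetilde{S}$, for the absolute dimension convention, and for the fact that the special fibers of $U_\sigma$ are in general non-reduced and non-irreducible. Once this is settled — using equivariant flat pullback from $S$ together with the structure of the $\T_S$-action — the gluing and induction proceed formally, and the resulting map $\Phi$ is an isomorphism of graded algebras.
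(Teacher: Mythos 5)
Your local computation and the overall shape of the argument (charts indexed by the cones of $c(\Pi)$, polynomial functions on each cone, compatibility along faces) are reasonable, but the global step is where the proposal breaks down. Operational Chow cohomology --- equivariant or not --- is not known to satisfy Zariski descent: there is no Mayer--Vietoris sequence for an open cover, and no ``gluing lemma'' asserting that a family of operational classes on the invariant affine charts $U_\sigma$, compatible on overlaps, descends to a class on $\X_{\Pi}$. A bivariant class is a compatible system of operations on the Chow homology of \emph{all} schemes mapping to $\X_{\Pi}$, and such an operation is neither determined by nor reconstructible from its restrictions to an open cover without further input. This is exactly the difficulty that forces Payne, over a field, to argue via Kimura's exact sequences for proper \emph{envelopes} rather than open covers, and it is why the paper does not argue chart-by-chart either. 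Both the injectivity and the surjectivity of your map $\Phi$ rest on this unproved descent property, so the induction on maximal cones cannot get started. (By contrast, your ``main obstacle,'' the local affine computation, is the comparatively routine part.)

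The paper's route avoids the issue entirely: it first establishes equivariant Poincaré duality for the regular proper toric scheme $\X_{\Pi}$ (Theorem \ref{th:dual2}, proved by producing orientation classes via equivariant Chow's lemma, flattening, and de Jong's alterations, following Kimura--Thorup), and then computes the equivariant Chow \emph{homology} $\CH_*^{\T_S}(\X/S)$. For homology the localization sequence $\CH_*^{\T_S}(O(\sigma)/S) \to \CH_*^{\T_S}(\X/S) \to \CH_*^{\T_S}(\X\setminus O(\sigma)/S) \to 0$ does exist, and this is what powers the induction on the number of cones of $c(\Pi)$. The identification with $PP^*(c(\Pi))$ is then made through Brion's equivariant multiplicities $e_\sigma$, sending $u$ to the tuple $\left(e_\sigma u / e_\sigma[\X]\right)_{\sigma}$, whose image is the subalgebra generated by the $\varphi_\sigma$ --- which is all of $PP^*(c(\Pi))$ by Proposition \ref{prop:generators}. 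To salvage your approach you would need to justify the descent step by importing the envelope machinery in the equivariant setting over $S$, at which point you would essentially be reproving Theorem \ref{th:dual2} in disguise.
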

The analogous  result for toric varieties over a field is given in \cite[Theorem 1]{Payne-equi} (see also \cite[Proposition 2.2]{BR}). 

As in the non-equivariant case, there is a natural Poincaré duality map 
\[
\CH_{\T_S}^k(\X) \longrightarrow \CH^{\T_S}_{n+1-k}(\X)
\]
given by capping with the fundamental class. As we mentioned before, we will show in Theorem \ref{th:dual2} that this map is an isomorphism. Moreover, if $\sigma \in c(\Pi)$ is a cone of dimension $k$, then under \eqref{eq:pp}, the equivariant class $[V(\sigma)] \in \CH_{n+1-k}(\X)$ is identified with the piecewise polynomial function $\rho_{\sigma}$ of degree $k$ from  Definition~\ref{def:generators}.

Assume that the special fiber $\X_s$ is reduced. As in the non-equivariant case, we show that the equivariant Chow groups of $\X_s$ can be computed in terms of the equivariant Chow groups of smooth irreducible toric varieties. Explicitly, consider $\Pi$ the SCR polyhedral complex associated to $\X$. In Proposition \ref{prop:exact-seq} we show that we have the following exact sequences
\[
\bigoplus_{\stackrel{\gamma \in \Pi(1)}{\gamma \text{ bdd}}}\CH_{\T_{\kappa}}^k\left(V(\gamma)\right) \xrightarrow{\gamma} \bigoplus_{v \in \Pi(0)}\CH_{\T_{\kappa}}^{k+1}\left(V(v)\right) \to \CH_k^{\T_{\kappa}}\left(\X_s\right) \to 0
\]
and 
\[
0 \to \CH^k_{\T_{\kappa}}\left(\X_s\right) \to \bigoplus_{v \in \Pi(0)}\CH_{\T_{\kappa}}^k\left(V(v)\right) \xrightarrow{\rho}\bigoplus_{\stackrel{\gamma \in \Pi(1)}{\gamma \text{ bdd}}}\CH_{\T_{\kappa}}^k\left(V(\gamma)\right),
\]
where for any natural number $\ell$, $\Pi(\ell)$ denotes the set of $\ell$-dimensional polyhedra in $\Pi$.
In Section \ref{sec:special} we give explicit descriptions of the maps $\gamma$ and $\rho$ in terms of piecewise polynomial functions. This leads to a characterization of $\CH^k_{\T_{\kappa}}\left(\X_s\right)$ in terms of so-called \emph{affine piecewise polynomial functions on $\Pi$} (see Definition \ref{def:affine-pp}). The following is Theorem~\ref{th:cohomology-special} in the text. 
\begin{theorem}\label{th:intro2}
The equivariant operational Chow ring of the special fiber $\CH_{\T_{\kappa}}^*(\X_s)$ can be identified with the graded ring of affine piecewise polynomial functions $PP^*(\Pi)$.
\end{theorem}
This theorem has been applied to the study of equivariant Chern classes of toric vector bundles over a DVR and connections to Bruhat--Tits buildings (see Remark \ref{rem:bruhat}).

Consider the smooth, complete toric variety $X = X_{\Sigma}$ over $K$. Both $\CH_k^{\T_{\kappa}}\left(\X_s\right)$ and $\CH^k_{\T_{\kappa}}\left(\X_s\right) $ are co-and contravariant for maps of special fibers induced by maps of toric models. Taking direct and inverse limits, one obtains the four groups $\widetilde{A}_{\T_K}^{k,k}(X), {A}^{k,k}_{\closed, \T_K}(X), \widetilde{D}_{\T_K}^{k,k}(X)$ and ${D^{k,k}}_{\closed, \T_K}(X)$ of closed equivariant $(k,k)$-forms, equivariant $(k,k)$-forms modulo the image of $\partial$ and $\bar{\partial}$, closed equivariant $(k,k)$-currents and equivariant $(k,k)$-currents modulo the image of $\partial$ and $\bar{\partial}$, respectively. In Proposition \ref{prop:com-forms-currents} we give combinatorial descriptions of these groups. In particular, we obtain the following description of the closed forms and currents.
\begin{prop}\label{prop:intro3}
Let notations be as above. 
\begin{itemize}
\item We have
\[
A^{*,*}_{\closed,\T_K}(X)\simeq \left\{ f \colon N_{\R} \to \R \; | \; \exists \Pi \in R(\Sigma) \text{ with } f \in PP(\Pi) \right\}.
\]
\item An element in $D_{\closed,\T_K}^{k,k}(X)$ is a tuple $\left(f_{\Pi'}\right)_{\Pi' \in R(\Sigma)}$, where $f_{\Pi'} \in PP(\Pi')$, compatible under the pushforward map.

\end{itemize}
\end{prop}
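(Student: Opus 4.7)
The plan is to combine Theorem \ref{th:intro2} with the definition (summarized in Proposition \ref{prop:com-forms-currents}) expressing forms and currents as suitable limits over the directed set $R(\Sigma)$ of regular toric models. By construction we have canonical identifications
\[
A^{k,k}_{\closed,\T_K}(X) \simeq \varinjlim_{\Pi \in R(\Sigma)} \CH^k_{\T_\kappa}(\X_{\Pi,s}/\kappa), \qquad D^{k,k}_{\closed,\T_K}(X) \simeq \varprojlim_{\Pi \in R(\Sigma)} \CH^k_{\T_\kappa}(\X_{\Pi,s}/\kappa),
\]
with transition maps given respectively by pullback and pushforward along the maps of special fibers induced by refinements of toric models.

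First I would apply Theorem \ref{th:intro2} to identify each of these Chow groups with the graded ring $PP^*(\Pi)$ of affine piecewise polynomial functions on $\Pi$. The key observation is that for any refinement $\Pi' \succeq \Pi$ in $R(\Sigma)$, the induced pullback on the combinatorial side is simply the inclusion $PP^*(\Pi) \hookrightarrow PP^*(\Pi')$, since a function which is polynomial on each polyhedron of $\Pi$ is automatically polynomial on each polyhedron of the finer complex $\Pi'$. Taking the direct limit then yields the nested union $\bigcup_{\Pi \in R(\Sigma)} PP^*(\Pi)$, which, viewed inside the space of functions $N_\R \to \R$ (recall $|\Pi| = N_\R$ by completeness), is precisely the set $\{f \colon N_\R \to \R \mid \exists\, \Pi \in R(\Sigma) \text{ with } f \in PP(\Pi)\}$. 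This gives the first bullet.

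For the second bullet, the same identification realizes $D^{k,k}_{\closed,\T_K}(X)$ as the inverse limit of the $PP^k(\Pi)$ along the pushforward maps dual to these inclusions. Hence, by definition of inverse limit, an element is a compatible tuple $(f_{\Pi'})_{\Pi' \in R(\Sigma)}$ with $f_{\Pi'} \in PP^k(\Pi')$, as claimed. For the case $k=1$, I would observe that elements of $PP^1(\Pi)$ are continuous piecewise-affine functions on $N_\R$, so each $f_{\Pi'}$ already defines a function on $N_\R$. To extract a well-defined function on $N_\Q$, I use that every rational point $x \in N_\Q$ can be realized as a vertex of some complex $\Pi_x \in R(\Sigma)$, and set $f(x) := f_{\Pi_x}(x)$. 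Independence of the choice of $\Pi_x$ follows by passing to a common refinement of any two choices and invoking the pushforward compatibility, which at vertices reduces to equality of values.

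The main obstacle is making the combinatorial pushforward in the inverse system sufficiently explicit. While the pullback is transparently the inclusion of piecewise polynomial rings, the pushforward requires a careful analysis of the induced proper maps $\X_{\Pi',s} \to \X_{\Pi,s}$ and their effect on the basis $\{\rho_\sigma\}$ from Definition \ref{def:generators}. Once this map is unwound, verifying that it respects evaluation at vertices —the precise fact needed for the $k=1$ argument —is the cleanest confirmation that a projective system of piecewise affine functions really does yield a function on the rational points of $N$.
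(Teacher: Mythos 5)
Your proposal is correct and follows essentially the same route as the paper: Definition \ref{def:forms&currents} realizes these groups as direct and inverse limits of $\CH^k_{\T_S}(\X_{\Pi,s}/S)$, Theorem \ref{th:cohomology-special} identifies each term with $PP^*(\Pi)$, and the transition maps ${\pi_s}^*$ and ${\pi_s}_!$ are identified combinatorially (in Section \ref{sec:forms-currents}) with the inclusion $PP^*(\Pi)\hookrightarrow PP^*(\Pi')$ and with restriction of the vertex data to $\Pi(0)$, respectively. The one point you defer — that the pushforward respects evaluation at old vertices — is exactly what the paper's explicit description of ${\pi_s}_!$ as the map $\left(f_{v'}\right)_{v'\in\Pi'(0)}\mapsto\left(f_{v}\right)_{v\in\Pi(0)}$ provides, so your vertex-evaluation argument for the $k=1$ claim goes through as stated.
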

In Example \ref{ex:b-div} we make a connection with so called toric b-divisors in the case $k=1$.

Now, also in this equivariant setting, given a model $\X = \X_{\Pi}$ of $X$ we have a map 

\[
\iota^*\iota_* \colon \CH^{\T_S}_{n-k}(\X_s) \longrightarrow \CH_{\T_S}^{k+1}(\X_s)
\]
consisting of pushing cycles forward from the special fiber to the model, then using Poincaré duality, and then pulling back.  We show in Section \ref{sec:iota} that one can compute this map via the composition $-\gamma \rho$, where $\gamma$ and $\rho$ are the maps given above. We provide an explicit combinatorial description of this composition in terms of piecewise polynomial functions. 

As we already mentioned, we show that $\on{ker}(\iota^*\iota_*)$ ( resp.~$\on{coker}(\iota^*\iota_*)$) is independent on the model. Hence, we may consider the canonical model $\X_{\Sigma}$, which is smooth, and we obtain the following corollary (which follows from Proposition~\ref{prop:ker=koker} and Corollary~\ref{cor:ker} in the text).
\begin{cor}\label{cor:intro4}
We have
\[
\on{ker}\left(\iota^*\iota_*\right) \simeq \on{coker}\left(\iota^*\iota_*\right) \simeq CH^{k}_{\T_K}(X_{\Sigma}).
\]
\end{cor}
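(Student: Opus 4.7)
The plan is to invoke the asserted model-independence of $\on{ker}(\iota^*\iota_*)$ and $\on{coker}(\iota^*\iota_*)$ so as to reduce the computation to the canonical toric model $\X_{\Sigma}=X_\Sigma\times_K S$. Since $X_\Sigma$ is smooth over $K$ and $S$ is regular, $\X_{\Sigma}$ is smooth over $S$; its associated SCR polyhedral complex is $\Sigma$ itself, regarded as a complex in $N_{\R}$ with the unique vertex at the origin and with $\Pi(1)_{\mathrm{bdd}}=\emptyset$.

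The first key step is to show that on this model $\iota^*\iota_*$ is the zero map. Two clean routes are available. Geometrically, $\iota\colon \X_s\hookrightarrow \X_{\Sigma}$ is the principal Cartier divisor cut out by a uniformizer $\pi\in R$, which is a torus-invariant section of $\O_{\X_{\Sigma}}(\X_s)\simeq\O_{\X_{\Sigma}}$; the equivariant normal bundle $N_{\X_s/\X_{\Sigma}}=\iota^*\O_{\X_{\Sigma}}(\X_s)$ is therefore equivariantly trivial, and the self-intersection formula yields $\iota^*\iota_*(\alpha)=c_1^{\T_S}(N_{\X_s/\X_{\Sigma}})\cap\alpha=0$. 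Combinatorially, the identification $\iota^*\iota_*=-\gamma\rho$ from Section~\ref{sec:iota} together with $\Pi(1)_{\mathrm{bdd}}=\emptyset$ forces both $\gamma$ and $\rho$ to have trivial domain or codomain, so their composition vanishes.

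The second step is to identify the source and target (which are now the kernel and cokernel) with $\CH^k_{\T_K}(X_\Sigma/K)$. Since $\Pi(1)_{\mathrm{bdd}}=\emptyset$, the exact sequences of Proposition~\ref{prop:exact-seq} collapse to isomorphisms between the equivariant Chow groups of $\X_s$ and those of the single component $V(0)=\X_{\Sigma,s}=X_\Sigma\times_K\kappa$. Combining Payne's theorem \cite{Payne-equi} and Theorem~\ref{th:intro1}, the equivariant Chow rings of both the smooth toric variety $X_\Sigma$ over $K$ and of its base change to $\kappa$ are canonically isomorphic to $PP^*(\Sigma)$; passing the source through the Poincaré duality isomorphism of Theorem~\ref{th:dual2} to reconcile dimension and codimension indices, we obtain the desired identifications.

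The main obstacle is the bookkeeping of degrees: one has to line up the absolute-dimension convention (absolute $=$ relative $+1$), the Poincaré duality on $\X_{\Sigma,s}$ (smooth only over $\kappa$, not over $S$), and the natural degree shift coming from the codimension-one inclusion $\iota$, so that the homological index $n-k$ on the source and the cohomological index $k+1$ on the target both collapse, after collapse of the exact sequences, onto $\CH^k_{\T_K}(X_\Sigma/K)$ rather than some adjacent shifted group.
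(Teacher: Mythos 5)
Your proposal is correct and follows essentially the same route as the paper: Proposition~\ref{prop:ker=koker} supplies $\on{ker}\simeq\on{coker}$ together with model-independence, and Corollary~\ref{cor:ker} then reduces to the canonical model $\X_{\Sigma}$, where $\iota^*\iota_*=0$ and the special fiber is the single smooth toric variety $X_{\Sigma,\kappa}$, so the kernel is all of $\CH^{\T_S}_{n-k+1}(\X_{\Sigma,s}/S)\simeq \CH^{k}_{\T_K}(X_{\Sigma}/K)$, exactly as in your collapse of the exact sequences. One caution: your geometric route via triviality of the normal bundle of $\X_s$ is only legitimate here because the canonical model's special fiber is irreducible and smooth, so that operational cohomology is identified with homology and $\iota^*\iota_*$ really is the Gysin self-intersection $c_1(N)\cap(-)$; on a general toric model $\X_s$ is still a principal Cartier divisor, yet $\iota^*\iota_*=-\gamma\rho$ is nonzero because Poincar\'e duality fails on the reducible special fiber, so that argument does not transfer and the combinatorial route ($\Pi(1)_{\mathrm{bdd}}=\emptyset$) is the one to rely on.
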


The map $\iota^*\iota_*$ is compatible with maps of special fibers induced by maps of toric models, hence one has the limit versions
\[
     dd^c \colon \widetilde{A}_{\T_K}^{k,k}(X) \longrightarrow A_{\closed, \T_K}^{k+1,k+1}(X)
   \quad \text{ and }\quad
      dd^c \colon \widetilde{D}_{\T_K}^{k,k}(X) \longrightarrow D_{\closed, \T_K}^{k+1,k+1}(X).
\]
We show that these satisfy some regularity properties (see Proposition~\ref{prop:regularity}). 

Now, given an equivariant cycle $Z \in \CH_*{\T_K}(X)$ we define an \emph{ equivariant Green current for} $Z$ to be an element $g_Z \in \widetilde{D}_{\T_K}^{k,k}(X)$ such that 
\[
dd^c(g_Z) + \delta_Z \in A_{\closed,\T_K}^{k,k}(X).
\]
Here, $\delta_Z \in D^{k,k}_{\closed}(X)$ is equivariant $\delta$-current defined in Definition \ref{def:delta}.

We defined the \emph{$k$'th equivariant arithmetic Chow group} $\widehat{\CH}_{\T_K}^k(X)$ as the group generated by pairs of the form $(Z, g_Z)$, where $Z$ is an equivariant cycle of degree $n-k$ on $X$ and $g_Z$ is a Green current for $Z$, modulo ``invariant'' rational equivalence (see Section \ref{sec:arith-chow1} for details). 

We have the following combinatorial description of $\widehat{\CH}_{\T_K}^k(X)$ (see~Theorem~\ref{th:direct} in the text).
\begin{theorem}\label{th:intro5}
There are canonical isomorphisms
\[
\widehat{\CH}_{\T_K}^k(X) \simeq \varinjlim_{\X\in R(\Sigma)}\CH_{\T_S}^k(\X) \simeq PP_{\Sigma}^k\left(N_{\R} \oplus \R_{\geq 0}\right),
\]
where
\[
PP_{\Sigma}^k\left(N_{\R} \oplus \R_{\geq 0}\right) \coloneqq  \left\{ f \colon N_{\R} \oplus \R_{\geq 0} \to \R \; | \; \exists \Pi \in R(\Sigma) \text{ with } f \in PP(c(\Pi)) \right\}. 
\]
\end{theorem}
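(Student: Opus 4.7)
My plan is to decompose the statement into two separate isomorphisms, handling the combinatorial one first and then the identification with the arithmetic Chow group.

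\textbf{Second isomorphism.} For each $\Pi \in R(\Sigma)$, Theorem~\ref{th:intro1} yields a graded ring isomorphism $\CH^*_{\T_S}(\X_\Pi/S) \simeq PP^*(c(\Pi))$. The plan is to show these are compatible with the directed system structure on $R(\Sigma)$. If $\Pi' \in R(\Sigma)$ refines $\Pi$, the equivariant toric morphism $\X_{\Pi'} \to \X_\Pi$ induces a pullback $\CH^*_{\T_S}(\X_\Pi/S) \to \CH^*_{\T_S}(\X_{\Pi'}/S)$; on the piecewise polynomial side, a function that is polynomial on each cone of $c(\Pi)$ is automatically polynomial on each cone of the refinement $c(\Pi')$, giving an inclusion $PP^*(c(\Pi)) \hookrightarrow PP^*(c(\Pi'))$. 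Commutativity of the resulting square is immediate from the explicit formula in Theorem~\ref{th:intro1} (sending the invariant cycle $V(\sigma)$ to $\rho_\sigma$, which is manifestly functorial under refinement). Passing to the colimit produces the identification $\varinjlim_\Pi \CH^k_{\T_S}(\X_\Pi/S) \simeq PP^k_\Sigma(N_\R \oplus \R_{\geq 0})$.

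\textbf{First isomorphism, forward direction.} Following the strategy of Bloch--Gillet--Soul\'e adapted to the equivariant toric setting, I construct
\[
\Psi \colon \varinjlim_{\Pi \in R(\Sigma)} \CH^k_{\T_S}(\X_\Pi/S) \longrightarrow \widehat{\CH}^k_{\T_K}(X).
\]
Given $c \in \CH^k_{\T_S}(\X_\Pi/S)$, Poincar\'e duality (Theorem~\ref{th:dual2}) produces a cycle class $c \cap [\X_\Pi] \in \CH^{\T_S}_{n+1-k}(\X_\Pi/S)$. Because cones of $c(\Pi)$ split canonically into horizontal ones (contained in $N_\R \times \{0\}$, indexed by $\Sigma$) and vertical ones, this cycle decomposes as $c \cap [\X_\Pi] = \overline{Z} + V_c$, where $\overline{Z}$ is the Zariski closure of an invariant codimension-$k$ cycle $Z$ on $X$ and $V_c$ is supported on $\X_{\Pi,s}$. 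The compatible family of classes obtained by equivariant proper pullback of $V_c$ along refinements defines a current $g_c \in \widetilde{D}^{\bullet,\bullet}_{\T_K}(X)$, and the Green current identity $dd^c(g_c) + \delta_Z \in A_{\closed,\T_K}^{k,k}(X)$ is exactly the relation $\iota^*\iota_*(V_c) + \iota^*[\overline{Z}] = \iota^*(c)$, which follows from the combinatorial $-\gamma\rho$ description of $\iota^*\iota_*$ from Section~\ref{sec:iota}. The assignment $c \mapsto (Z, g_c)$ thus defines $\Psi$, and it descends to the direct limit because refinement of $\Pi$ leaves both $Z$ and the class of $V_c$ in the inverse system unchanged.

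\textbf{Inverse and bijectivity.} To construct $\Psi^{-1}$, given a representative $(Z, g_Z)$ I extend $Z$ by Zariski closure to $\overline{Z}$ on the canonical model $\X_\Sigma$, and realize $g_Z$ on a sufficiently fine $\X_\Pi$ dominating $\X_\Sigma$ by a vertical cycle $V$. The Green current condition ensures that $\overline{Z} + V$ lies in the image of the Poincar\'e duality isomorphism $\CH^k_{\T_S}(\X_\Pi/S) \simeq \CH^{\T_S}_{n+1-k}(\X_\Pi/S)$, producing the desired cohomology class. For injectivity of $\Psi$, I use Corollary~\ref{cor:intro4}: if $(Z,g_c) = 0$ then $Z = 0$ on the generic fiber and $V_c \in \on{im}(\iota_*\iota^*)$, so $c \cap [\X_\Pi]$ lies in $\on{ker}(\iota^*\iota_*) \simeq \CH^k_{\T_K}(X_\Sigma/K)$ and is moreover supported off the generic fiber, forcing vanishing in the direct limit. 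Surjectivity follows directly from the construction.

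\textbf{Main obstacle.} The principal technical hurdle is verifying that the Green current condition defining $\widehat{\CH}^k_{\T_K}(X)$ captures precisely the compatibility needed for a family of special-fiber cycles to assemble into the $\iota^*$ of a class on a single total model, and that equivariant rational equivalence on pairs $(Z, g_Z)$ is matched exactly by the colimit equivalence on the right-hand side. I plan to handle this by reducing, via Theorem~\ref{th:intro1} and Theorem~\ref{th:intro2}, to explicit identities between piecewise polynomial functions on $c(\Pi)$ and on $\Pi$, where the relations become tractable combinatorial statements about subdivisions and restrictions.
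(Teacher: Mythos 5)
Your overall architecture coincides with the paper's proof of Theorem~\ref{th:direct}: first identify $\varinjlim_{\Pi}\CH^k_{\T_S}(\X_\Pi/S)$ with $PP^k_{\Sigma}(N_{\R}\oplus\R_{\geq 0})$ via Theorem~\ref{th:equi-pol} and the compatibility of pullback with subdivision, then send a class on a model to a pair (generic-fiber cycle, vertical correction) and invert by taking Zariski closures on a sufficiently fine model. The second isomorphism and the inverse map are essentially as in the paper.

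There is, however, a genuine gap in your construction of the Green current $g_c$. You define $g_c$ as the family of pullbacks along refinements of the single vertical cycle $V_c$ living on $\X_\Pi$. That family is a legitimate element of $\widetilde{D}^{k-1,k-1}_{\T_K}(X)$ (it is compatible under ${\pi_s}_*$), but it is not in general a Green current for $Z$: the whole purpose of the Green current is to absorb the failure of the tower $\delta_Z=\left(\iota^*[\overline{Z}^{\Pi'}]\right)_{\Pi'}$ to stabilize, and the pullbacks of $V_c$ carry no information about how the Zariski closures $\overline{Z}^{\Pi'}$ change under subdivision. Concretely, take $Z=V(\sigma)$ for $\sigma\in\Sigma(k)$ and $c=[\overline{Z}^{\Sigma}]$ on the canonical model: then $V_c=0$, so your $g_c=0$, yet $dd^c(0)+\delta_Z=\delta_Z$ corresponds to the tower of restrictions of the functions $\varphi_{\sigma,\Pi'}$ of Definition~\ref{def:generators}, which genuinely depend on $\Pi'$ (the star of $\sigma$ shrinks under subdivision), so $\delta_Z\notin A^{k,k}_{\closed,\T_K}(X)$ and $(Z,0)$ is not an arithmetic cycle. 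The same defect makes $\Psi$ ill-defined on the direct limit: applying your recipe to $\pi^*c$ on a refinement $\Pi'$ produces the vertical part ${\pi}^{!}V_c+\left(\pi^*\overline{Z}^{\Pi}-\overline{Z}^{\Pi'}\right)$, not ${\pi}^{!}V_c$. The correct current --- this is Proposition~\ref{prop:green-fun} --- is $g_{\Pi'}=\pi^*[\widehat{\eta}]-\overline{\eta}^{\Pi'}$ computed model by model, i.e.\ it must include the discrepancy $\pi^*\overline{Z}^{\Pi}-\overline{Z}^{\Pi'}$; with that choice $dd^cg_{\Pi'}+\iota^*\overline{\eta}^{\Pi'}=\pi_s^*\iota^*[\widehat{\eta}]$ is pulled back from the fixed model $\Pi$ and hence lies in $A^{k,k}_{\closed,\T_K}(X)$. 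Your single-model identity $\iota^*\iota_*(V_c)+\iota^*[\overline{Z}]=\iota^*(c)$ is correct on $\X_\Pi$ itself, but it does not by itself control the projective system, which is what the Green current condition asks for. Once $g_c$ is replaced by the paper's construction, the rest of your argument goes through.
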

Hence, $\widehat{\CH}_{\T_K}^*(X)$ inherits a graded algebra structure given by multiplication of piecewise polynomial functions (see Corollary~\ref{cor:direct}). 

As was mentioned at the beginning of the introduction, the above theorem suggests that the non-archimedean analogue of a (smooth) toric metric on a toric vector bundle is the choice of a toric model and a piecewise polynomial function on the (cone over the) polyhedral complex associated to the toric model. In particular, for $k=1$ Theorem \ref{th:intro5} is saying that the non-arquimedean analogue of choosing a (smooth) hermitian toric metric on a toric line bundle associated to a virtual support function $\psi$, is the choice of a piecewise affine function on $N_{\R}$ whose recession function agrees $\psi$( see Remark~\ref{rem:direct-toric}).  These correspond to the so-called \emph{toric model metrics} from \cite{BPS}.
Theorem \ref{th:intro6} below extends this analogy to include singular metrics. 

Similar to the non-equivariant case, we  define an extended equivariant arithmetic Chow group $\widecheck{\CH}_{\T_K}^k(X)$. In Theorem~\ref{th:inverse} we show:
\begin{theorem}\label{th:intro6}
There is a canonical isomorphism 
\begin{equation}\label{eq:sing}
\widecheck{\CH}_{\T_K}^*(X) \simeq \varprojlim_{\X}\CH_{\T_K}^*(\X).
\end{equation}
\end{theorem}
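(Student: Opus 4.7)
The plan is to mimic the strategy used in the non-equivariant case of \cite{GS-direct}, adapted to our equivariant toric setting and making essential use of the combinatorial descriptions established in Theorems~\ref{th:intro1}, \ref{th:intro2}, and \ref{th:intro5}. I would construct explicit maps $\Phi$ and $\Psi$ in both directions and verify that they are mutually inverse.

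First I would define the forward map $\Phi \colon \widecheck{\CH}_{\T_K}^*(X) \to \varprojlim_\X \CH_{\T_S}^*(\X/S)$. A generator of $\widecheck{\CH}_{\T_K}^k(X)$ is a pair $(Z, g)$, where $Z$ is an invariant cycle of codimension $k$ on $X$ and $g \in \widetilde{D}_{\T_K}^{k,k}(X)$ is an arbitrary (not necessarily Green) current. By Proposition~\ref{prop:intro3}, $g$ corresponds to a compatible family $(g_{\X})_{\X \in R(\Sigma)}$ of equivariant Chow homology classes on the special fibers (modulo the boundary image of $\iota^*\iota_*$). For each toric model $\X$, let $\overline{Z}^{\X}$ denote the Zariski closure of $Z$ in $\X$, an invariant cycle of codimension $k$ on $\X$. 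Using Poincaré duality (Theorem~\ref{th:dual2}) to identify $\CH_{n+1-k}^{\T_S}(\X/S) \simeq \CH_{\T_S}^k(\X/S)$, I set
\[
\Phi(Z,g)_{\X} \coloneqq \bigl[\overline{Z}^{\X}\bigr] + \iota_*(g_{\X}) \in \CH_{\T_S}^k(\X/S).
\]
Compatibility under pushforward for morphisms $\X' \to \X$ of toric models follows from the fact that Zariski closures and the inclusion $\iota$ commute with proper pushforwards, together with the compatibility of the family $(g_{\X})$. Invariance under invariant rational equivalence is checked by verifying that the divisor of an invariant rational function $f$, together with the associated correction term in the current, is sent to zero in each $\CH_{\T_S}^*(\X/S)$; this relies on the explicit description of $\iota^*\iota_*$ via $-\gamma \rho$ from Section~\ref{sec:iota}.

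For the inverse map $\Psi$, the key ingredient is a canonical decomposition of each class $c_{\X} \in \CH_{\T_S}^k(\X_\Pi/S) \simeq PP^k(c(\Pi))$ into a horizontal and a vertical part. Restriction to the generic fiber gives a surjection $\CH_{\T_S}^k(\X_\Pi/S) \to \CH_{\T_K}^k(X_{\Sigma}/K) \simeq PP^k(\Sigma)$, which combinatorially corresponds to restricting a piecewise polynomial function on $c(\Pi)$ to the subfan $\Sigma \subset c(\Pi)$ consisting of cones contained in $N_{\R} \times \{0\}$. A choice of splitting yields a decomposition $c_{\X} = c_{\X}^{\mathrm{hor}} + c_{\X}^{\mathrm{vert}}$, where $c_{\X}^{\mathrm{hor}}$ is the closure of an invariant cycle $Z$ on $X$ and $c_{\X}^{\mathrm{vert}} = \iota_*(h_{\X})$ for some class $h_{\X}$ supported on the special fiber. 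Compatibility of $(c_{\X})_{\X}$ under pushforward guarantees that the invariant cycles $Z$ assemble into a single class on $X$ and that the vertical parts $(h_{\X})_{\X}$ descend to an element $g \in \widetilde{D}_{\T_K}^{k,k}(X)$. I then set $\Psi((c_{\X})_{\X}) \coloneqq (Z,g)$.

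The verifications $\Psi \circ \Phi = \mathrm{id}$ and $\Phi \circ \Psi = \mathrm{id}$ are then a matter of unwinding the definitions. The main obstacle I anticipate is ensuring that the horizontal-vertical decomposition used in defining $\Psi$ is canonical modulo the invariant rational equivalence relation defining $\widecheck{\CH}_{\T_K}^k(X)$. Concretely, the ambiguity in lifting an invariant cycle $Z$ on $X$ to an invariant cycle on each $\X \in R(\Sigma)$ must be absorbed exactly by the equivalence relations on pairs $(Z,g)$. In our setting this ambiguity is combinatorially transparent via piecewise polynomial functions on $c(\Pi)$ and the explicit description of $\iota^*\iota_*$, which should make this step significantly cleaner than in the general framework of \cite{GS-direct}.
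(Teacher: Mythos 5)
Your proposal is correct and takes essentially the same route as the paper: the paper's map $\Theta'$ sends a compatible family $(f_{\Pi})_{\Pi}$ to $(f_{\Sigma},\, z)$ with $z_{\Pi} = Z_{f_{\Pi}} - \overline{Z_{f_{\Sigma}}}^{\Pi}$ (your horizontal/vertical decomposition, which is in fact canonical since the vertical part is just the difference of the cycle with the Zariski closure of its generic-fiber restriction), and its inverse sends $(\eta,g)$ to the family associated to $\overline{\eta}^{\Pi}+\iota_*(g_{\Pi})$, exactly your $\Phi$. The only discrepancy is a degree bookkeeping slip: the current $g$ attached to a codimension-$k$ class lives in $\widetilde{D}_{\T_K}^{k-1,k-1}(X)$, not $\widetilde{D}_{\T_K}^{k,k}(X)$.
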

The group $\widecheck{\CH}_{\T_K}^*(X)$ has the structure of an $\widehat{\CH}_{\T_K}^*(X)$-module. In Remark~\ref{rem:proj} comment on the interpretation of Equation \eqref{eq:sing}.

Finally, we expect that assuming some positivity properties we will be able to define an intersection product of (positive) elements in this extended equivariant arithmetic Chow group. This is then the analogue of developing an intersection theory of (positive) Chern currents on $X$. We will continue this line of work in the future.

\subsection{Outline of the paper}
In Section~\ref{sec:toric-schemes} we briefly recall the combinatorial characterization of toric schemes over a discrete valuation ring (DVR) in terms of strongly convex convex rational (SCR) polyhedral complexes. In Section~\ref{sec:eq-chow} we consider the equivariant Chow homology group and the equivariant operational Chow cohomology ring of a $\T_S$-scheme $\X$. We show that Poincaré duality holds for a toric scheme and that its equivariant operational Chow cohomology ring is isomorphic to the ring of piecewise polynomail functions on the cone over the polyhedral complex defining the toric scheme (Theorem~\ref{th:intro1}). In Section~\ref{sec:special} we give combinatorial descriptions of the equivariant Chow homology group and of the equivariant operational Chow cohomology ring of the special fiber of a toric scheme.  For this we show that these groups can be computed in terms of equivariant Chow groups of smooth  toric varieties over a field via an exact sequence. In particular, we show that the equivariant Chow cohomology ring of the special fiber can be identified with a ring of affine piecewise polynomial functions (Theorem~\ref{th:intro2}). 

Section~\ref{sec:currents} contains the main definitions of this paper, namely the equivariant forms and currents on a toric variety defined over a discretely valued field. We start by describing combinatorially the map $\iota^*\iota_*$ mentioned before. Then we define the equivariant forms and currents as limits of equivariant Chow groups of special fibers and characterize these combinatorially (Proposition~\ref{prop:intro3}). We also define an equivariant Green currents associated to an equivariant cycle. 

In Section~\ref{sec:equi-arith-chow} we define the equivariant arithmetic Chow group of a toric variety over a discretely valued field and an extended version thereof. We show that these can be identified with direct and inverse limits of equivariant Chow groups of toric models in Theorems~\ref{th:intro5} and \ref{th:intro6}), respectively.

\subsection{Conventions} For the whole article $K$ denotes a field equipped with a non-trivial discrete valuation $\operatorname{val}_K \colon K^{\ast} \to \Z$. We denote by $R$ the corresponding valuation ring, $\mathfrak{m}$ its maximal ideal with generator $\varpi$, and by $\kappa$ the residue field, which we assume to be algebraically closed. 

We set $S = \operatorname{Spec}\left(R\right)$ and denote by $\eta$ and by $s$ the generic and special points of $S$, respectively.  All schemes over $S$ are assumed to be integral and of finite type. Given a scheme $\mathcal{X}$ over $S$, we set $\mathcal{X}_{\eta} \coloneqq \mathcal{X} \otimes_S \operatorname{Spec}(K)$ for the generic, and $\mathcal{X}_{s} \coloneqq \mathcal{X} \otimes_S \operatorname{Spec}(k)$ for the special fiber. 

All of the Chow groups considered in this article are assumed to be tensored with $\Q$. 

\vspace{0.5cm}
\noindent
\emph{Acknowledgments:} I am thankful to José Burgos and Roberto Gualdi for useful and enlightening discussions.  I am especially thankful to Kiumars Kaveh for pointing out some inaccuracies regarding the definition of the space of affine piecewise affine polynomial functions on a polyhedral complex appearing in the previous version of this paper. I am especially thankful to anonymous referees for providing very insightful comments. This article would have not been possible without their support.

\section{Toric schemes over a DVR}\label{sec:toric-schemes}
We briefly recall the combinatorial characterization of toric schemes over a discrete valuation ring (DVR).  We refer to \cite[Chapter~3]{BPS} for definitions and results regarding strongly convex rational (SCR) polyhedral complexes and assume the reader is familiarized with such notions (see also \cite{chow-toric} for a recollection). \\

\textbf{Notation.} We use standard notation in toric geometry. We let $N$ be an $n$-dimensional lattice. We denote by $M = N^{\vee}$ its dual lattice. For any ring $R$ we write $N_R$ and $M_R$ for the tensor products $N \otimes_{\Z} R$ and $M_{R} = M \otimes_{\Z} R$, respectively.  Given a SCR polyhedral complex $\Pi$ in $N_{\R}$ we denote by $c(\Pi)$ in $N_{\R} \times \R_{\geq 0}$ the \emph{cone} over $\Pi$ and by $\rec(\Pi)$ its recession complex in $N_{\R}$. Recall that this is not always a fan. Indeed, in \cite{BS} the authors exhibit an example of a recession complex where the intersection of two cones is neither empty nor a common face. However, we will mostly deal with $\Pi$ being complete, in which case $\rec(\Pi)$ is indeed always fan.  $\Pi$ is said to be \emph{regular} if $c(\Pi)$ is regular, i.e.~if each cone in the fan $c(\Pi)$ is generated by a subset of a basis of $N \times \Z$.

 \subsection{Toric schemes over a DVR}\label{subsec:toric-schemes}

We recall the combinatorial characterization of proper toric schemes over $S$ in terms of complete SCR polyhedral complexes. 
We follow \cite[Section~3.5]{BPS}.

Assume that $N$ is the cocharacter lattice of a split torus $\mathbb{T}_K$ over $K$ of dimension $n$ which is the base change of a split torus $\mathbb{T}_S \simeq \mathbb{G}_{m,S}^n$ over $S$.  As before, $M = N^{\vee}$ denotes its dual lattice. Let $\widetilde{N} \coloneqq N \oplus \Z$, $\widetilde{M} \coloneqq M \oplus \Z$. 

The following definition is taken from \cite[Definition 3.5.1]{BPS}.  
\begin{Def}\label{def:toric-scheme}
A \emph{toric scheme over $S$} of relative dimension $n$ is a normal integral separated scheme of finite type $\mathcal{X}$, equipped with a dense open embedding $\mathbb{T}_K \hookrightarrow \mathcal{X}_{\eta}$ and an $S$-action of $\mathbb{T}_S$ over $\mathcal{X}$ that extends the action of $\mathbb{T}_K$ on itself by translations. 
\end{Def}
Note that if $\mathcal{X}$ is a toric scheme over $S$, then $\mathcal{X}_{\eta}$ is a toric variety over $K$ with torus $\mathbb{T}_K$. 

The following is \cite[Definition 3.5.2]{BPS}.
\begin{Def}\label{def:toric-model}
Let $X$ be a toric variety over $K$ with torus $\mathbb{T}_K$ and let $\mathcal{X}$ be a toric scheme over $S$ with torus $\mathbb{T}_S$. We say that $\mathcal{X}$ is a \emph{toric model of $X$ over $S$} if the identity morphism on $\mathbb{T}_K$ can be extended to an isomorphism from $X$ to $\mathcal{X}_{\eta}$.
If $\mathcal{X}'$ and $\mathcal{X}$ are two toric models of $X$ over $S$ and $\alpha \colon \mathcal{X}' \to \mathcal{X}$ is an equivariant $S$-morphsim, we say that $\alpha$ is a \emph{morphism of toric models} if its restriction to $\mathbb{T}_K$ is the identity. Two toric models are said to be \emph{isomorphic} if there exists a morphism of toric models between them which is an isomorphism.
\end{Def}
\begin{Def}
A toric model $\mathcal{X}$ is said to be \emph{regular} or \emph{proper},  if it is a regular, respectively, a proper scheme. 
\end{Def} 
\begin{rem}\label{rem:directed}
It follows from \cite[Chapter IV]{KKMD} that any toric model $\mathcal{X}$ of $X$ over $S$ is dominated by a regular one, i.e. there exists a regular toric model $\mathcal{X}'$ of $X$ over $S$ together with a morphism of toric models $\mu \colon \mathcal{X}'\to \mathcal{X}$.
\end{rem}

\begin{Def}\label{def:models} Let $X$ be a complete toric variety over $K$. The set $R(X)$ consists of all isomorphism classes of regular proper toric models of $X$ over $S$. 
It is endowed with the partial order given by $\mathcal{X}' \geq \mathcal{X}$ if $\mathcal{X}'$ dominates $\mathcal{X}$, i.e. if there exists a morphism of toric models $\mu \colon \mathcal{X}' \to \mathcal{X}$. By Remark \ref{rem:directed}, the partial order "$\geq$" defines a directed set structure on $R(X)$. 
\end{Def}

\subsubsection*{Combinatorial characterization}\label{subsec:correspondence}

Let $\widetilde{\Sigma}$ be a rational fan in $N_{\R} \oplus \R_{\geq 0}$. To $\widetilde{\Sigma}$ we can associate a toric scheme over $S$, which we denote by $\mathcal{X}_{\widetilde{\Sigma}}$. This is done in the usual way by associating an affine toric scheme $\mathcal{X}_{\sigma}$ to each cone $\sigma \in \widetilde{\Sigma}$ and then applying an appropriate gluing construction 
\begin{align}\label{eq:gluing}
\mathcal{X}_{\widetilde{\Sigma}} \coloneqq \bigcup_{\sigma \in \widetilde{\Sigma}}\mathcal{X}_{\sigma}.
\end{align}
We gather here some properties regarding this construction and refer for details to \cite[Section~3.5]{BPS}.

\begin{enumerate}
\item There are two different types of cones in $\widetilde{\Sigma}$. The ones that are contained in the hyperplane $N_{\R} \times \{0\}$ and the ones that are not.  In general, if $\sigma$ is contained in $N_{\R} \times \{0\}$, then $\mathcal{X}_{\sigma}$ is contained in the generic fiber and it agrees with the classical affine toric varity $X_{\sigma}$ over $K$.  Note that if $\widetilde{\Sigma}$ is complete in $N_{\R} \oplus \R_{\geq 0}$, then the cones contained in $N_{\R} \times \{0\}$ are the ones coming from recession cones of the polyhedra in the polyhedral complex $\Pi = \widetilde{\Sigma} \cap \left(N_{\R} \times \{1\}\right)$. 

If $\sigma$ is not contained in $N_{\R} \times \{0\}$ then $\mathcal{X}_{\sigma}$ is not contained in the generic fiber.

\item Set 
\[
\Pi \coloneqq \widetilde{\Sigma} \cap \left(N_{\R} \times \{1\}\right) \quad \text{and} \quad \Sigma \coloneqq \widetilde{\Sigma} \cap \left(N_{\R} \times \{0\}\right).
\]
Then, given polyhedra $\Lambda$, $\Lambda'$ in $\Pi$ with $\Lambda' \subseteq \Lambda$, we have a natural open immersion of affine toric schemes $\mathcal{X}_{\Lambda} \hookrightarrow \mathcal{X}_{\Lambda'}$. Moreover, if a cone $\sigma \in \Sigma$ is a face of a cone $c(\Lambda)$ for some $\Lambda \in \Pi$, then the affine toric variety $X_{\sigma}$ is also an open subscheme of $\mathcal{X}_{\Lambda}$. The gluing construction \eqref{eq:gluing} can be written as 
\[
\mathcal{X}_{\widetilde{\Sigma}} = \bigcup_{\Lambda \in \Pi}\mathcal{X}_{\Lambda} \cup \bigcup_{\sigma \in \Sigma}X_{\sigma}.
\]

\item There are open immersions
\[
\mathbb{T}_K \hookrightarrow \mathcal{X}_{\eta} \hookrightarrow \mathcal{X}_{\widetilde{\Sigma}}
\]
of schemes over $S$ and there is an action of $\mathbb{T}_S$ over $\mathcal{X}_{\widetilde{\Sigma}}$, constructed as in the case of toric varieties over a field, which extends the action of $\T_K$ on itself. It follows that $\mathcal{X}_{\widetilde{\Sigma}}$ is a toric scheme over $S$. 
\item $\Sigma$ is a fan and defines a toric variety $X_{\Sigma}$ over $K$ which coincides with the generic fiber $\mathcal{X}_{\widetilde{\Sigma},\eta}$. It follows that $\mathcal{X}_{\widetilde{\Sigma}}$ is a toric model of $X_{\Sigma}$.
\item The special fiber $\mathcal{X}_{\widetilde{\Sigma},s}$ has an induced action by $\mathbb{T}_k$ but, in general, it is not a toric variety over $k$ (it is not necessarily irreducible nor reduced). It is reduced if and only if the vertices of all $\Lambda \in \Pi$ are contained in the lattice $N$. The reduced schemes associated to its irreducible components are toric varieties over $k$ with this action (see Subsection \ref{subsec:orbits}). 

\item If the fan $\widetilde{\Sigma}$ is complete in $N_{\R} \times \R_{\geq 0}$,, then the scheme $\mathcal{X}_{\widetilde{\Sigma}}$ is proper over $S$. In this case, the set $\left\{\mathcal{X}_{\Lambda}\right\}_{\Lambda \in \Pi}$ is an open cover of $\mathcal{X}_{\widetilde{\Sigma}}$.

\end{enumerate}

In fact, any toric scheme over $S$ of relative dimension $n$ arises from such a rational fan in $N_{\R} \oplus \R_{\geq 0}$. The following follows from \cite[Section~IV. 3.]{KKMD}.
\begin{theorem}\label{th:kkmd}
The correspondence 
\[
\widetilde{\Sigma} \longmapsto \X_{\widetilde{\Sigma}}
\]
is a bijection between the set of rational fans in $N_{\R} \oplus \R_{\geq 0}$ and the set of isomorphism classes of toric schemes over $S$ of relative dimension $n$. The scheme $\X_{\widetilde{\Sigma}}$ is proper if and only if $\widetilde{\Sigma}$ is complete in $N_{\R} \times \R_{\geq 0}$. It is regular if and only $\widetilde{\Sigma}$ is regular. 
\end{theorem}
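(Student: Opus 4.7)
The plan is to construct an inverse to the assignment $\widetilde{\Sigma} \mapsto \X_{\widetilde{\Sigma}}$ by appealing to the relative analogue of Sumihiro's equivariant completion theorem over the DVR $S$. Namely, I would first show that any toric scheme $\mathcal{X}$ over $S$ of relative dimension $n$ admits a covering by $\mathbb{T}_S$-invariant affine open subschemes, each of which is of the form $\Spec\bigl(R[\sigma^{\vee} \cap \widetilde{M}]\bigr)$ for some strongly convex rational polyhedral cone $\sigma \subseteq N_{\R} \oplus \R_{\geq 0}$. The extra coordinate in $\widetilde{M} = M \oplus \Z$ corresponds to the uniformizer $\varpi \in R$, so the element $(0,1) \in \widetilde{M}$ maps to $\varpi$; the condition that $(0,1) \in \sigma^{\vee}$, equivalently that $\sigma$ lies in the closed half-space $N_{\R} \oplus \R_{\geq 0}$, is exactly what makes the semigroup ring an $R$-algebra of finite type and ensures flatness over $S$.

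Second, I would use the gluing compatibility among the chosen invariant affine pieces to show that the corresponding cones form a rational fan $\widetilde{\Sigma}$ in $N_{\R} \oplus \R_{\geq 0}$: intersections of open affines correspond to common faces, and the face relations encode the open immersions $\mathcal{X}_{\tau} \hookrightarrow \mathcal{X}_{\sigma}$ whenever $\tau \prec \sigma$. This defines the inverse map $\mathcal{X} \mapsto \widetilde{\Sigma}$. To verify it is a two-sided inverse of the construction \eqref{eq:gluing}, one checks that $\widetilde{\Sigma}$ is recovered from $\X_{\widetilde{\Sigma}}$ via the order-reversing correspondence between $\mathbb{T}_S$-orbits and cones, and that $\X_{\widetilde{\Sigma}}$ is reconstructed from its invariant affine cover.

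Finally, the properness and regularity criteria follow by standard arguments. For properness, one applies the valuative criterion equivariantly: maps from valuation rings into $\X_{\widetilde{\Sigma}}$ extend uniquely precisely when the support of $\widetilde{\Sigma}$ equals all of $N_{\R} \oplus \R_{\geq 0}$, which is the completeness condition. For regularity, it suffices to check affine pieces, and the classical lemma that $\Spec\bigl(R[\sigma^{\vee} \cap \widetilde{M}]\bigr)$ is regular iff $\sigma$ is generated by part of a $\Z$-basis of $\widetilde{N} = N \oplus \Z$ transfers verbatim to the relative setting since $R$ is a regular ring and the toric semigroup algebra is flat over it.

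The main obstacle is the very first step: establishing the $\mathbb{T}_S$-equivariant invariant affine cover over a non-field base. Over an algebraically closed field this is Sumihiro's theorem, and the delicate extension to the relative setting over a DVR, involving normality, flatness, and the correct treatment of the special fiber, is precisely what is carried out in \cite[Chapter~IV]{KKMD}. Consequently, my proof would be organized as a citation of \emph{loc.~cit.} for the existence of the invariant affine cover, followed by the combinatorial bookkeeping described above for the fan, properness, and regularity statements.
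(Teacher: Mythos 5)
Your proposal is correct and takes essentially the same route as the paper, which offers no argument of its own beyond citing \cite[Section~IV.3]{KKMD} for exactly the classification, properness, and regularity statements you outline. The only imprecision is notational: the affine piece is not $\Spec\bigl(R[\sigma^{\vee}\cap\widetilde{M}]\bigr)$ as a free semigroup algebra but its quotient identifying $\chi^{(0,1)}$ with $\varpi$ (equivalently $\Z[\sigma^{\vee}\cap\widetilde{M}]\otimes_{\Z[t]}R$), which you do indicate in words immediately afterwards.
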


Moreover,  in the complete case, using \cite[Corollary 2.1.13]{BPS}, we obtain the following (see also \cite[Theorem~3.5.3]{BPS}). 
\begin{theorem}\label{th:corr-comp}
The correspondence 
\[
\Pi \longmapsto \mathcal{X}_{c(\Pi)}
\]
gives a bijection between complete SCR polyhedral complexes in $N_{\R}$ and isomorphism classes of proper toric schemes over $S$ of relative dimension $n$. Moreover, $\Pi$ is regular if and only if $\mathcal{X}_{c(\Pi)}$ is regular.

\end{theorem}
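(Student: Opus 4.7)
The plan is to reduce Theorem \ref{th:corr-comp} to the already established Theorem \ref{th:kkmd} combined with the purely combinatorial bijection \cite[Corollary 2.1.13]{BPS}. Theorem \ref{th:kkmd} provides a bijection between rational fans $\widetilde{\Sigma} \subseteq N_{\R} \oplus \R_{\geq 0}$ and isomorphism classes of toric schemes over $S$ of relative dimension $n$, together with the compatibilities ``proper $\Leftrightarrow$ complete'' and ``regular $\Leftrightarrow$ regular''. The missing ingredient is therefore only the translation between complete SCR polyhedral complexes in $N_{\R}$ and complete rational fans in $N_{\R} \oplus \R_{\geq 0}$, which is the cone construction.

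First I would spell out the combinatorial bijection. Given a complete SCR polyhedral complex $\Pi$ in $N_{\R}$, the cone $c(\Pi) \subseteq N_{\R} \oplus \R_{\geq 0}$ is a rational fan, and its completeness in $N_{\R} \oplus \R_{\geq 0}$ follows from the completeness of $\Pi$ together with the fact that the cones $c(\Pi)\cap (N_\R \times \{0\})$ coincide with the recession cones of the polyhedra of $\Pi$, so that the boundary hyperplane $N_{\R} \times \{0\}$ is covered by $\rec(\Pi)$. Conversely, starting with a complete rational fan $\widetilde{\Sigma}$ in $N_{\R} \oplus \R_{\geq 0}$, the slice $\Pi \coloneqq \widetilde{\Sigma} \cap (N_{\R} \times \{1\})$ is a complete SCR polyhedral complex in $N_{\R}$ whose recession complex is $\Sigma \coloneqq \widetilde{\Sigma}\cap (N_\R \times \{0\})$. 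By \cite[Corollary 2.1.13]{BPS} these two constructions are mutually inverse, giving the bijection between complete SCR polyhedral complexes in $N_{\R}$ and complete rational fans in $N_{\R} \oplus \R_{\geq 0}$.

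Second, I would compose. By Theorem \ref{th:kkmd}, the assignment $\widetilde{\Sigma} \mapsto \X_{\widetilde{\Sigma}}$ restricts to a bijection between complete rational fans in $N_{\R} \oplus \R_{\geq 0}$ and isomorphism classes of proper toric schemes over $S$ of relative dimension $n$. Composing with $\Pi \mapsto c(\Pi)$ yields the desired bijection $\Pi \mapsto \X_{c(\Pi)}$.

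Finally, for the regularity statement, by definition (stated in the notation paragraph of Section~\ref{sec:toric-schemes}) $\Pi$ is regular exactly when the fan $c(\Pi)$ is regular, and by Theorem \ref{th:kkmd} the scheme $\X_{c(\Pi)}$ is regular if and only if the fan $c(\Pi)$ is regular. The two conditions therefore coincide, completing the proof. There is no genuine technical obstacle here; the only point requiring a little care is verifying that completeness of $\Pi$ in $N_{\R}$ matches completeness of $c(\Pi)$ in $N_{\R} \oplus \R_{\geq 0}$ under the cone construction, which is handled by the cited reference.
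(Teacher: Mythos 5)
Your proposal is correct and follows essentially the same route as the paper, which obtains the theorem by combining Theorem \ref{th:kkmd} with the combinatorial bijection of \cite[Corollary 2.1.13]{BPS} between complete SCR polyhedral complexes in $N_{\R}$ and complete rational fans in $N_{\R}\oplus\R_{\geq 0}$ via the cone construction. You have merely made explicit the steps (the slicing inverse, the matching of completeness, and the fact that regularity of $\Pi$ is by definition regularity of $c(\Pi)$) that the paper leaves to the cited reference.
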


Now, fix a complete fan $\Sigma$ in $N_{\R}$. 
\begin{cor}\label{th:models}
The correspondence 
\[
\Pi \longmapsto \mathcal{X}_{c(\Pi)}
\]
gives a bijection between complete SCR polyhedral complexes in $N_{\R}$ such that $\rec(\Pi) = \Sigma$ and isomorphism classes of proper toric models of $X_{\Sigma}$ over $S$. Moreover, $\Pi$ is regular if and only if $\mathcal{X}_{c(\Pi)}$ is regular.

\end{cor}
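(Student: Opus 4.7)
The plan is to deduce this corollary as a direct specialisation of Theorem~\ref{th:corr-comp}, which already furnishes a bijection between all complete SCR polyhedral complexes in $N_{\R}$ and all isomorphism classes of proper toric schemes over $S$ of relative dimension $n$, together with the corresponding regularity statement. It therefore suffices to identify, among all such complexes, those $\Pi$ for which $\X_{c(\Pi)}$ is a toric \emph{model} of the fixed toric variety $X_{\Sigma}$ in the sense of Definition~\ref{def:toric-model}.

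The key step uses property~(4) in the list from Section~\ref{subsec:correspondence}. Since the cones of $c(\Pi)$ contained in the hyperplane $N_{\R} \times \{0\}$ are precisely the cones of the recession fan $\rec(\Pi)$, that property provides a canonical identification
\[
\X_{c(\Pi),\eta} \simeq X_{\rec(\Pi)}
\]
of toric varieties over $K$ which restricts to the identity on the common open dense torus $\T_K$. Consequently, there exists an isomorphism $X_{\Sigma} \xrightarrow{\sim} \X_{c(\Pi),\eta}$ extending the identity on $\T_K$ if and only if $\rec(\Pi) = \Sigma$ as fans in $N_{\R}$, which is exactly the condition imposed by Definition~\ref{def:toric-model}.

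Combining this with Theorem~\ref{th:corr-comp} gives the claimed bijection, and the regularity assertion transfers unchanged because $\Pi$ is by definition regular precisely when $c(\Pi)$ is regular. The only technical point worth checking is that the equivalence relation on the model side (isomorphisms of toric models must restrict to the identity on $\T_K$) is compatible with the one on the toric-scheme side used in Theorem~\ref{th:corr-comp}: any isomorphism of toric schemes between $\X_{c(\Pi)}$ and $\X_{c(\Pi')}$ with $\rec(\Pi) = \rec(\Pi') = \Sigma$ can be adjusted by composition with a torus translation on the generic fiber so that it becomes the identity on $\T_K$, whence no information is lost. I expect this torus-bookkeeping to be the only, and essentially routine, obstacle in the argument.
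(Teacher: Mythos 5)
Your proposal is correct and is essentially the derivation the paper intends: the corollary is stated without proof as an immediate specialization of Theorem~\ref{th:corr-comp}, obtained by restricting the bijection to those $\Pi$ with $\rec(\Pi)=\Sigma$ and using the identification of the generic fiber $\X_{c(\Pi),\eta}\simeq X_{\rec(\Pi)}$ from property~(4) of the gluing construction. The only (harmless) imprecision is your final remark about adjusting by a torus translation: equivariant automorphisms of $\T_K$ may also involve lattice automorphisms, which a translation cannot absorb, but no adjustment is needed because the isomorphisms implicit in Theorem~\ref{th:corr-comp} already respect the fixed torus embedding, so restricting to isomorphisms of toric models loses nothing.
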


Given a complete polyhedral complex $\Pi$, in order to simplify notation, we write $\mathcal{X}_{\Pi}$ for $\mathcal{X}_{c(\Pi)}$. 
\begin{Def}
Note that $\Sigma$ is also a complete SCR polyhedral complex in $N_{\R}$ with $\rec(\Sigma) = \Sigma$. The toric scheme $\mathcal{X}_{\Sigma}$ is hence a model over $S$ of $X_{\Sigma}$ which is called the \emph{canonical model}. Its special fiber $\mathcal{X}_{\Sigma,s} = X_{\Sigma,k}$ is the toric variety over $k$ defined by the fan $\Sigma$.

\end{Def}
\begin{Def}\label{def:toricmodels}
Consider the set $R(\Sigma)$ of all complete regular SCR polyhedral complexes $\Pi$ in $N_{\R}$ such that $\operatorname{rec}(\Pi) = \Sigma$.  It is endowed with the partial order given by
\[
\Pi' \geq \Pi \quad \text{iff} \quad \text{ $c(\Pi')$ is a subdivision of $c(\Pi)$}.
\]

Given two elements in $R(\Sigma)$ there is always a third one dominating both. This gives a directed set structure. 
\end{Def}
Now, given $\Pi' \geq \Pi$ in $R(\Sigma)$ we have an induced proper morphism of toric models $\pi \colon \X_{\Pi'} \to \X_{\Pi}$ and conversely, any such morphism is induced by a subdivision(see \cite{KKMD}).  Hence $R(\Sigma)$ agrees with the directed set $R(X_{\Sigma})$ of regular proper toric models of $X_{\Sigma}$ from Definition \ref{def:models}.
\begin{rem}
Note that the canonical model defines a minimal element in $R(\Sigma)$. The existance of this canonical model, which is smooth, will play an important role afterwards. 
\end{rem}

\subsection{Torus orbits}\label{subsec:orbits}

As before, $N$ is the cocharacter lattice of a split torus $\mathbb{T}_K$ over $K$ of dimension $n$ and $M = N^{\vee}$ is its dual lattice.
Let $\Pi$ be a complete regular SCR polyhedral complex in $N_{\R}$ and let $\X_{\Pi}$ be the corresponding proper regular toric scheme over $S$. Set $\Sigma = \rec(\Pi)$. 

We recall the combinatorial characterization of torus orbits in $\X_{\Pi}$. We mainly follow \cite[Section~3.5]{BPS}.

There are two kinds of toric orbits. 
\begin{itemize}
\item First, there is a bijection between cones in $\Sigma$ and the set of orbits under the action of $\T_K$ on $\X_{\Pi, \eta}$ taking $\sigma$ to the Zariski closure $V(\sigma)$ in $\X_{\Pi}$ of the orbit $O(\sigma)\subseteq \X_{\Pi,\eta} = X_{\Sigma}$. Then $V(\sigma)$ is a horizontal scheme over $S$ in the sense that the structure morphism $V(\sigma) \to S$ is dominant, of relative dimension $n-\dim(\sigma)$. 

Moreover, $V(\sigma)$ has itself the structure of a regular proper toric scheme over $S$. Indeed, let $N(\sigma) = N/(N \cap \R\sigma)$, $M(\sigma) = N(\sigma)^{\vee}$ and $\pi_{\sigma} \colon N_{\R} \to N(\sigma)_{\R}$ the linear projection. Consider the set
\[
\Pi(\sigma) = \left\{ \pi_{\sigma}(\Lambda) \; \big{|} \; \Lambda \in \Pi \; , \; \sigma \subseteq \rec(\Lambda) \right\} \subseteq N(\sigma)_{\R}.
\]
Then $\Pi(\sigma)$ is a complete regular SCR polyhedral complex in $N(\sigma)_{\R}$ and we have an isomorphism of toric schemes over $S$
\[
V(\sigma) \simeq \X_{\Pi(\sigma)}
\]
(see \cite[Proposition 3.5.7]{BPS}).
\item On the other hand, there is a bijection between polyhedra in $\Pi$ and the set of orbits under the action of $\T_{\kappa}$ on the special fiber $\X_{\Pi,s}$. Let $\widetilde{N} = N \oplus \Z$. Given $\Lambda \in \Pi$, set $\widetilde{N}(\Lambda) = \widetilde{N}/(\widetilde{N} \cap \R c(\Lambda))$, $\widetilde{M}(\Lambda) = \widetilde{N}(\Lambda)^{\vee}$ and $\pi_{\Lambda} \colon \widetilde{N}_{\R} \to \widetilde{N}(\Lambda)_{\R}$ the linear projection. Then to $\Lambda$ one associates a vertical cycle $V(\Lambda)$ (vertical in the sense that it is contained in the special fiber). It has the structure of a toric variety over $\kappa$ with torus $\T(\Lambda) = \operatorname{Spec}\left(k\left[\widetilde{M}(\Lambda)\right]\right)$ and corresponding fan given by 
\[
\Pi(\Lambda) = \left\{\pi_{\Lambda}\left(c\left(\Lambda'\right)\right) \; \big{|} \; \Lambda' \in \Pi \; , \; \Lambda \prec \Lambda' \right\} \subseteq \widetilde{N}(\Lambda)_{\R}
\]
(see \cite[Proposition 3.5.8]{BPS}). Here, recall that \enquote{$\prec$} stands for \enquote{is a face of}. 

In particular, there is a one-to-one correspondence between the vertices of $\Pi$ and the irreducible components of the special fiber: 
\[
v \leftrightarrow V(v).
\]

The toric orbits contained in $V(v)$ correspond to the polyhedra $\Lambda \in \Pi$ containing $v$. In particular, the components given by two vertices $v$ and $v'$ share an orbit of dimension $\ell$ if and only if there exists a polyhedron of dimension
$n - \ell$ containing both $v$ and $v'$.
\end{itemize}
\begin{rem}
The special fiber is not necessarily reduced. It is reduced if and only if all of the vertices of $\Pi$ are in $N$ (see \cite[Example 3.6.11]{BPS}).  
\end{rem}

\section{Equivariant Chow groups of toric schemes}\label{sec:eq-chow}
Assume that $N$ is the cocharacter lattice of a split torus $\mathbb{T}_K$ over $K$ of dimension $n$ which is the base change of a split torus $\mathbb{T}_S \simeq \mathbb{G}_{m,S}^n$ over $S$. 
We consider the category of $\T_S$-schemes over $S$, i.e. integral separated $S$-schemes of finite type with a torus $\T_K \hookrightarrow \X_{\eta}$ and an $S$-action of $\T_S$ over $\X$ that extends the action of $\T_K$ on itself by translations. Morphisms of $\T_S$-schemes are $\T_S$-equivariant. 

In this section we study properties of the equivariant Chow homology and the equivariant operational Chow cohomology group of a $\T_S$-scheme $\X$. The latter carries a ring structure. We show that Poincaré duality holds for a toric schemes and that its equivariant operational Chow cohomology ring is isomorphic to the ring of piecewise polynomial functions on the cone over the polyhedral complex defining the toric scheme. This generalizes the case of toric varieties over a field (see \cite[Theorem~1]{Payne-equi}). We will make use of \cite{EG},\cite{Brion-equi}, \cite{BR} and \cite{KT}.

Recall that we assume that the Chow groups are tensored with $\Q$.
\subsection{Equivariant Chow homology groups of toric schemes}\label{subsec:hom-schemes}

\begin{Def}\label{def:abs-dim}
For any scheme $\X$ over $S$ we write $\CH_k(\X)$ for the Chow homology group of absolute dimension $k$-algebraic cycles modulo rational equivalence.  Note that this is independent of $S$.
\end{Def}
\begin{rem}\label{rem:dim}
Our notion of absolute dimension coincides with the notion of \emph{relative} dimension given in \cite[Section 20.1]{fulint} plus 1. We choose to work with this absolute notion because it coincides with the one used in~\cite{chow-toric} and in \cite{BGS}. 
%We refer to \cite[Section 3.1]{chow-toric} for more explanations regarding this notion. 
\end{rem}

Now, for any $\T_S$-scheme $\X$ over $S$ (that is, an $S$-scheme with an action of $\T_S$) of (absolute) dimension $n+1$, we consider the equivariant Chow groups $CH_k^{\T_S}(\X)$ in the sense of \cite[Section 6.2]{EG}.  Recall that these are given by choosing a finitely generated projective $S$-module $E$, such that $\T_S$ acts freely on an open set $U \subseteq E$ whose complement has arbitrarily high codimension ($> n+1-k$). Then the $k$'th equivariant Chow group of $\X$ is defined as $\CH_{k+\ell-n-1}\left(\X\times^{\T_S}U \right)$, where $\ell = \dim(U)$ and where $\X\times^{\T_S}U$ is the quotient space $(\X\times U)/{\T_S}$.
\begin{rem}
Existence of the sets $E, U$ above satisfying the desired properties is given in \cite[Lemma 7]{EG}. Moreover, by Proposition~23 in \emph{loc.~cit.} the quotient space $\X\times^{\T_S}U$ is again a scheme (which by the argument below can be even taken to be toric). The well-definiteness of $CH_k^{\T_S}(\X)$ (i.e. the fact that it is independent of the choice of $E,U$) follows as in \cite[Definition/Proposition~1]{EG} using \cite[Lemma 9]{EG}.
Set
\[
CH_*^{\T_S}(\X) \coloneqq \bigoplus_{k \in \Z}CH_k^{\T_S}(\X).
\]
\end{rem}

\begin{rem}
Note that unlike for ordinary Chow groups, the equivariant Chow groups $\CH^{\T_S}_k(\X)$ can be non-zero for any $k \leq n+1$.
\end{rem}
Similar to the complex case described in \cite[Section 2.2]{BR}, the approximation $U$ can be chosen such that $U$ has a faithful action of a large $S$-torus $\T_{U,S}$ with dense generic orbit, and such that the action of $\T_S$ on $U$ is given by an inclusion $\T_S \hookrightarrow \T_{U,S}$. Then the quotient space $\X\times^{\T_S}U$ has an action of $\widetilde{\T}_S = \T_{U,S}/\T_{S}$. If $\X$ is a toric scheme, then this gives $\X\times^{\T_S}U$ the structure of a toric scheme. It is moreover smooth, regular, complete, iff the scheme $\X$ has these properties. 

\begin{exa}\label{exa:point}
\begin{enumerate}
\item Consider the base scheme $S$. Then $\CH^{\T_S}_{*}(S)$ can be identified with the symmetric algebra $\widetilde{S}=\on{Sym}(\widetilde{M})$ over the character group $\widetilde{M} = M \oplus \Z$ of the torus $\T_S$ (actually over $\widetilde{M}_{\Q} = M_{\Q} \oplus \Q$ since we are tensoring with $\Q$). Moreover, for any positive integer $k$ the degree $-k$ part $\CH^{\T_S}_{-k}(S)$ can be identified with the degree $k$ part $\on{Sym}^k(\widetilde{M})$. For the proof of this fact we refer to \cite[Lemma 2]{EG-char}, where the result is proven over a field, but generalizes to the DVR case.  Indeed, the identification $\CH_{-k}(S) = \on{Sym}^k(\widetilde{M})$ is given explicitly in the following way: let $\chi \in \widetilde{M}$ a character and let $k_{\chi} \colon \T_S \to \mathbb{G}_{m,S}$ the corresponding $1$-dimensional $\T_S$-representation. Denote by $L_{\chi}$ the line bundle $U \times^{\T_S} k_{\chi} \to U / \T_S$.
The map $\chi\mapsto c_1(L_{\chi})$ extends to a graded ring isomorphism $\widetilde{S} \to \CH^{\T_S}_{*}(S)$, where the latter is negatively graded.

\item More generally, any $\CH_*^{\T_S}(\X)$ is an $\widetilde{S}$-module, where $\widetilde{M}$ acts on $\CH_*^{\T_S}(\X)$ by homogeneous elements of degree $-1$. 
\end{enumerate}
\end{exa}

Now, since the approximations can be taken within the toric world, standard results on Chow groups of toric schemes can be carried out to the equivariant setting. In particular, we have the following description of the equivariant Chow groups.

\begin{prop}\label{prop:chow-hom}
Let $\X_{\Pi}$ be a regular, proper toric scheme over $S$ with corresponding SCR polyhedral complex $\Pi$ in $N_{\R}$. Consider $c(\Pi)$ in $N_{\R} \oplus R_{\geq 0}$ the cone over $\Pi$. Then $CH_*^{\T_S}(\X_{\Pi})$ is an $\widetilde{S}$-module defined by generators $\left[V(\sigma)\right]$, for $\sigma \in c(\Pi)$.  Moreover, these generators (as an $\widetilde{S}$-module) are divided into ``horizontal'' and ``vertical'' cycles, depending on whether the cone is contained in $N_{\R} \times\{0\}$ or not. 
\end{prop}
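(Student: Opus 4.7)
The plan is to reduce to the non-equivariant Chow-homology description of toric schemes over a DVR obtained in \cite{chow-toric}, by carrying out the Borel-type approximation defining equivariant Chow groups entirely within the toric category.

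For a fixed $k\in\Z$, I would invoke \cite[Lemma~7]{EG} to pick a finitely generated projective $S$-module $E$ and an open subset $U\subseteq E$ on which $\T_S$ acts freely, with $\codim(E\setminus U)>k+1$, and such that (as recalled in the paragraph preceding the proposition) $U$ additionally carries a faithful action of a larger split $S$-torus $\T_{U,S}\supseteq \T_S$ with dense generic orbit. Setting $\ell\coloneqq\dim(U/S)$ and $\widetilde{\T}_S\coloneqq \T_{U,S}/\T_S$, the mixed quotient
\[
\mathcal{Y}\coloneqq \X_{\Pi}\times^{\T_S}U
\]
inherits the structure of a regular proper toric $S$-scheme for the torus $\widetilde{\T}_S$, and by definition
\[
\CH_k^{\T_S}(\X_{\Pi}/S)=\CH_{k+\ell-n-1}(\mathcal{Y}/S).
\]

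Next, I would apply the non-equivariant toric result of \cite{chow-toric} to $\mathcal{Y}$: the ordinary Chow homology $\CH_*(\mathcal{Y}/S)$ is generated as a $\Q$-vector space by the classes of $\widetilde{\T}_S$-orbit closures. Under the natural projection $\mathcal{Y}\to U/\T_S$, every such orbit closure can be written as the cap product of the class of some $[V(\sigma)\times^{\T_S}U]$, $\sigma\in c(\Pi)$, with a class pulled back from $U/\T_S$. Passing to the direct limit over the approximations $(E,U)$, the classes pulled back from $U/\T_S$ assemble into the coefficient ring $\widetilde{S}=\on{Sym}(\widetilde{M})_{\Q}=\CH_*^{\T_S}(S/S)$, while the ``$\X$-part'' contributes precisely the geometric generators $[V(\sigma)]\in\CH_*^{\T_S}(\X_{\Pi}/S)$ for $\sigma\in c(\Pi)$. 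Hence $\CH_*^{\T_S}(\X_{\Pi}/S)$ is generated as an $\widetilde{S}$-module by the classes $[V(\sigma)]$.

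The horizontal/vertical dichotomy is immediate from Section~\ref{subsec:orbits}: a cone $\sigma\subseteq N_{\R}\times\{0\}$ lies in the recession fan $\Sigma=\rec(\Pi)$, so $V(\sigma)$ is the closure in $\X_{\Pi}$ of a $\T_K$-orbit in the generic fiber and dominates $S$; otherwise $\sigma$ corresponds to a polyhedron of $\Pi$ (or a face of its cone at positive height) and $V(\sigma)$ is supported in the special fiber $\X_{\Pi,s}$. The main obstacle is the second paragraph above: one needs a clean description of the fan of $\mathcal{Y}$ and must verify that the classes $[V(\sigma)]$ are independent of the choice of approximation $(E,U)$, i.e.\ compatible with the inverse system of mixing constructions. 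This is the DVR analogue of the Brion--Edidin--Graham equivariant Chow computation \cite{Brion-equi, EG} and is where most of the care must be taken.
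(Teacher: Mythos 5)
Your proposal is correct in outline but takes a genuinely different route from the paper's own proof. The paper never unwinds the Borel construction: it writes down the localization sequence (special fibre $\to$ total space $\to$ generic fibre $\to 0$) in both the equivariant and the non-equivariant settings, notes that the bottom row is exact by \cite{chow-toric}, that the top row is exact using invariant subvarieties and eigenfunctions, that the outer vertical comparison maps are isomorphisms by the field case (citing \cite{fmss}, since $\X_{\Pi,s}$ and $\X_{\Pi,\eta}$ live over the fields $\kappa$ and $K$), and then concludes by the five lemma together with the orbit-closure description of $\CH_*(\X_{\Pi}/S)$ from \cite{chow-toric}. You instead apply \cite{chow-toric} directly to the mixed quotient $\X_{\Pi}\times^{\T_S}U$, which is legitimate because (as recorded in the paragraph before the proposition) this quotient is again a regular proper toric scheme over $S$ for the torus $\T_{U,S}/\T_S$. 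What your route buys is a transparent origin for the $\widetilde{S}$-module structure --- $\on{Sym}(\widetilde{M})$ acts by capping with classes pulled back from the approximation $U/\T_S$ of the classifying space --- and it is the standard Edidin--Graham/Brion mechanism, consistent with the paper's remark that ``the approximations can be taken within the toric world.'' The cost is exactly the step you flag yourself: one must describe the fan of $\X_{\Pi}\times^{\T_S}U$, check that each of its orbit closures has the form $V(\sigma)\times^{\T_S}Z$ with $Z$ a $\T_{U,S}$-orbit closure in $U$ so that it factors as $\bigl[V(\sigma)\times^{\T_S}U\bigr]$ capped with a class from $U/\T_S$, and verify independence of the pair $(E,U)$, which is handled by \cite[Lemma 9]{EG}. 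The paper's route outsources this bookkeeping to the exactness of the localization sequences and to the known field case; your route keeps everything inside the toric category at the price of that fan computation, which you have identified but not carried out. The horizontal/vertical dichotomy is treated identically in both arguments.
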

\begin{proof}
Consider the following commutative diagram 
%\begin{figure}[H]
\begin{center}
    \begin{tikzpicture}
      \matrix[dmatrix] (m)
      {
      0 & 0&0  \\
      \widetilde{M} \CH_k^{\T_S}\left(\X_{\Pi,s}\right)  &  \widetilde{M}\CH_k^{\T_S}\left(\X_{\Pi}\right)   &   \widetilde{M}  \CH_k^{\T_S}\left(\X_{\Pi,\eta}\right) & 0 \\
        \CH_k^{\T_S}\left(\X_{\Pi,s}\right)  & \CH_k^{\T_S}\left(\X_{\Pi}\right)  & \CH_k^{\T_S}\left(\X_{\Pi,\eta}\right) & 0 \\
        \CH_k\left(\X_{\Pi,s}\right)  & \CH_k\left(\X_{\Pi}\right)  & \CH_k\left(\X_{\Pi,\eta}\right) & 0 \\
       0 & 0 & 0  \\
      };
      \draw[->] (m-3-1) to node[above]{$\iota_*^{\T_S}$}  (m-3-2);
       \draw[->] (m-3-2) to node[above]{$j_{\T_S}^*$}  (m-3-3);
        \draw[ ->] (m-3-3)to (m-3-4);
      \draw[ ->] (m-3-1)to (m-4-1);
       \draw[ ->] (m-3-2)to (m-4-2);
        \draw[ ->] (m-3-3)to (m-4-3);
        
      \draw[->] (m-4-1) to node[above]{$\iota_*$} (m-4-2);
       \draw[->] (m-4-2) to node[above]{$j^*$} (m-4-3);
        \draw[ ->] (m-4-3)to (m-4-4);
          \draw[ ->] (m-4-1) to (m-5-1);
            \draw[ ->] (m-4-2) to (m-5-2);
              \draw[ ->] (m-4-3) to (m-5-3);
               \draw[ ->] (m-1-1) to (m-2-1);
            \draw[ ->] (m-1-2) to (m-2-2);
              \draw[ ->] (m-1-3) to (m-2-3);
               \draw[ ->] (m-2-1) to (m-3-1);
            \draw[ ->] (m-2-2) to (m-3-2);
              \draw[ ->] (m-2-3) to (m-3-3);
               \draw[ ->] (m-2-1) to (m-2-2);
                \draw[ ->] (m-2-2) to (m-2-3);
                 \draw[ ->] (m-2-3) to (m-2-4);

     \end{tikzpicture}
     \end{center}
   %  \end{figure}
where $\iota \colon \X_{\Pi,s}\hookrightarrow \X_{\Pi}$ and $j \colon \X_{\Pi,\eta}\hookrightarrow \X_{\Pi}$ denote the inclusions of the special and the generic fiber, respectively. 

By excision, the bottom row is exact (see \cite[Equation 3.1]{chow-toric}, also \cite[Section 1.9]{fulint}).  Using equivariant excision formula (see \cite[Lemma 4]{EG}), we have that the middle row is also exact. Hence, $\CH_k^{\T_S}\left(\X_{\Pi}\right)$ is generated by the images of the generators of $\CH_k^{\T_S}\left(\X_{\Pi,s}\right)  $ and by a choice of preimages of the generators of $\CH_k^{\T_S}\left(\X_{\Pi,\eta}\right)$. 

Now, from \cite[Section 2.3, Corollary 1]{Brion-equi}, we have that the left and right columns are exact. Therefore, from the proof of \cite[Theorem~1.1]{chow-toric} and \cite[Theorem~2.1]{Brion-equi}, it follows that, as an $\widetilde{S}$-module, $\CH_k^{\T_S}\left(\X_{\Pi,s}\right)$ is generated by classes of vertical cycles corresponding to cones $\sigma \in c(\Pi)$ not contained in $N_{\R} \times \{0\}$, and $\CH_k^{\T_S}\left(\X_{\Pi,\eta}\right)$ is generated by horizontal cycles corresponding to cones $\sigma \in c(\Pi)$ which are contained in $N_{\R} \times \{0\}$. This concludes the proof. 

%By a simple diagram chasing we see that the middle column is also exact. Thus the result follows from the description of the Chow groups of toric schemes over $S$ in \cite[Theorem~1.1]{chow-toric}.  In particular, a cone in $c(\Pi)$ of dimension $n+1-k$ corresponds to an equivariant $k$-cycle.  As in Section \ref{subsec:orbits}, these cycles are divided into ``horizontal'' and ``vertical'' cycles, depending on whether the cone is contained in $N_{\R} \times\{0\}$ or not. 
\end{proof}
%From \cite[Theorem~2.1]{Brion-equi}

\begin{prop}\label{prop:rel-inv}
In $\CH_*^{\T_S}(\X_{\Pi})$ we have the relations
\begin{equation}\label{eq:rel}
\left[\on{div}_W(f)\right]-\chi\left[W\right]=0,
\end{equation}
 where $W$ is an invariant subvariety of $\X_{\Pi}$ and $f$ is a non-constant rational function on $W$ which is an eigenvector of $\T_S$ of weight $\chi \in \widetilde{M}$, i.e. such that 
\[
g \cdot f = \chi(g) f \quad \forall g \in \T_S.
\]

In \eqref{eq:rel}, $\chi\left[W\right]$ denotes the action of $\widetilde{M}$ on $\CH_*^{\T_S}(\X_{\Pi})$ by homogeneous maps of degree $-1$ as in Example \ref{exa:point}.
\end{prop}
 
 \begin{proof}
Let $W$ and $f$ as in the statement of the proposition. Let $L_{\chi}$ be the line bundle over $U/\T_S$ from example \ref{exa:point}. Then $f$ can be considered as a rational section of the pullback of $L_{\chi}$ to $(W \times U)/\T_S$, whith divisor $\left[\div_W(f)\right]$. It follows that in the $\widetilde{S}$-module $\CH_*^{\T_S}(\X)$ the relation $\left[\div_W(f)\right] = \chi[W]$ holds. 
 \end{proof}
 
 \begin{rem}
 In the field case, the above relations describe \emph{all} relations. We don't know if this is the case here.
 \end{rem}

Our next goal is to study the equivariant operational Chow cohomology rings of toric schemes over $S$.  These are the equivariant versions of the operational Chow rings considered by Fulton in \cite[Chapters 17, 20]{fulint}. We show that these can be identified with rings of piecewise polynomial functions on the cones over the SCR polyhedral complexes defining the toric schemes.  

First we recall the case of toric varieties over a field.  
\subsection{Equivariant operational Chow cohomology groups of toric varieties}\label{sec:toric}
We recall the identification of the equivariant operational Chow cohomology ring of a toric variety over a field with the ring of piecewise polynomial functions on the associated fan.  We follow mainly \cite{BR}.  

\subsubsection*{Preliminaries on generalized fans}
Let $N$ be a lattice of dimension $n$. 

In order to study equivariant cohomology groups of toric varieties and functoriality properties thereof we need to distinguish between toric varieties in the classical sense, i.e. normal toric varieties containing a torus $T$ as a dense opens subset and an action of the torus on the variety which extends the action of the torus on itself, and toric varieties in the sense of Brion \cite{BR}, that is, normal varieties with an action of $T$ such that there is a dense obit and the stabilizers of points are connected. In the latter, the dimension of the torus may be bigger than the dimension of the variety. For example, if we want to think of the inclusion of a torus orbit inside the toric boundary as a T-equivariant map of toric varieties (i.e. we want the same torus to be acting on both sides) then we have to consider Brions notion. One can of course take the quotient of the torus by the stabilizer of a point in the dense orbit to a toric variety in the classical case, but in doing so, we don't longer have the action of the original torus. We will need this formalism when studying equivariant cohomology of toric varieties, in particular, when defining the pushforward map, since here the torus which is acting is fixed. 

From now on, we will use the term \emph{toric variety} in the sense of Brion. These are classified in terms of \emph{generalized fans}. We start by recalling this notion. 

A \emph{generalized fan} is a collection of rational polyhedral cones $\{\sigma\}$ satisfying the usual axioms of a fan. Hence, the only difference with the classical definition is that the cones are no longer required to be strongly convex. The usual notions of \emph{support} $|\Sigma|$ (union of cones), \emph{complete}, smooth and simplicial extend to generalized fans in the obvious way.  We write $L(\sigma)$ for the linear space $\sigma - \sigma$. 

If $\Sigma$ is a generalized fan, then $\sigma_0 \coloneqq \bigcap_{\sigma \in \Sigma}\sigma$ is a linear subspace of $N_{\R}$. Let $\overline{N} = N/\left(\sigma_0 \cap N\right)$ with quotient map $N \to \overline{N}$. Clearly, $\Sigma$ is a (usual) fan if and only if $\sigma_0 = \{0\}$ and $\overline{\Sigma} = \left\{\overline{\sigma} \; | \; \sigma \in \Sigma \right\}$ is a fan in $\overline{N}_{\R}$.  A generalized fan which is a fan the usual sense is said to be \emph{non-degenerate}.

If $\Sigma$ is a generalized fan with linearity space $\sigma_0$, then for any natural number $k$ we denote by $\Sigma(k)$ the set of cones of dimension $\dim(\sigma_0) + k$.

Let $\sigma$ be a cone of a generalized fan $\Sigma$ in $N_{\R}$. The \emph{star $\Sigma(\sigma)$} is the generalized fan in $N_{\R}$ with linear subspace $\sigma_0=L(\sigma)$ and cones corresponding to the cones in $\Sigma$ containing $\sigma$. 

The construction of a toric variety from a generalized fan and vice-versa is done similarly as in the classical case. We refer to \cite[Section 2.1]{BR} for details. In particular, there is a bijection between the cones in $\Sigma$ and the torus orbits in $X_{\Sigma}$.

\begin{rem} In \cite[pg.\,278]{CLS} the  toric variety $X_{\Sigma}$ of a generalized fan $\Sigma$ is defined to be the (classical) toric variety of the non-degenerate fan $\overline{\Sigma}$. Since we are dealing with equivariant cohomology, we prefer to work with Brion's definition. 
\end{rem}

\begin{exa}\label{exa:star} Figure \ref{fig:star} shows an example of a fan $\Sigma$ (in this case a complete, non-degenerate fan) and the generalized star fan $\Sigma(\tau)$. The shaded regions correspond to the two dimensional cones. 
\begin{figure}[H]
\begin{center}
\begin{tikzpicture}[scale=1.5]
\draw (-4.5,0.7) node{$\sigma'_1$};
\draw (-4.5,-0.7) node{$\sigma'_2$};
\draw (0.5,0.7) node{$\sigma_1$};
\draw (0.3,-0.7) node{$\sigma_2$};
    \draw[very thick] (0,0) to (1,0)  node[right]{$\tau$} ;
    \draw[very thick] (0,0) to  (0,1);
      \draw[very thick] (0,0) -- (-1,-1);
     \draw (0.5,-1.5) node{$\Sigma$};     
 \draw[very thick] (-6, 0) to node[above]{$\tau$} (-3,0);
       \draw (-4,-1.5) node{$\Sigma(\tau)$};     
       \filldraw[blue, opacity = 0.1] (0,0) -- (1,0) -- (1,1)--(0,1)--(0,0); 
       \filldraw[blue, opacity = 0.1] (0,0) -- (1,0) -- (1,-1)--(-1,-1)--(0,0); 
       \filldraw[blue, opacity = 0.1] (0,0) -- (-1,-1) -- (-1,1)--(0,1)--(0,0); 
        \filldraw[blue, opacity = 0.1] (-6,0) -- (-3,0) -- (-3,1)--(-6,1)--(-6,0); 
         \filldraw[blue, opacity = 0.1] (-6,0) -- (-3,0) -- (-3,-1)--(-6,-1)--(-6,0); 
 \end{tikzpicture}
\end{center}\caption{The star fan as a generalized fan}\label{fig:star}

 \end{figure}
\end{exa}
\begin{Def}\label{def:morphism-gen-fan}
A \emph{morphism between two generalized fans $\Sigma'$ and $\Sigma$} in $N_{\R}$ is a map $\mu \colon \Sigma' \to \Sigma$ satisfying the following properties:
\begin{itemize}
\item$\mu$ is order preserving with respect to inclusions of cones in $\Sigma'$, resp. $\Sigma$, 
\item For any $\sigma' \in \Sigma'$ we have that 
\[
\sigma' \subseteq \mu(\sigma')+L(\mu(\sigma'_0)).
\]
\end{itemize}
\end{Def}
\begin{exa}\label{exa:map-gen}
The map $\Sigma(\tau)$ to $\Sigma$ sending $\tau $ to $\tau$ and $\sigma'_i$ to $\sigma_i$ in Example \ref{exa:star} is an example of a morphism of generalized fans. 
\end{exa}
\begin{rem} A map of generalized fans $\mu  \colon \Sigma' \to \Sigma$ does not induce a morphism of non-degenerate fans $\overline{\mu} \colon \overline{\Sigma'} \to \overline{\Sigma}$ in the sense of (see e.\,g.\,\cite[Definition 3.3.1]{CLS}). Indeed, this latter notion notion does not include an equivariant version of the case of the closure of a toric variety contained in the boundary of a toric variety i.e. where the same torus is acting on both spaces. The definition above, which is due to Brion \cite[Proposition~2.1 (i)]{BR}, allows to handle this situation, as in the example above. 
%
%
%We consider here morphisms of generalized fans which may live in different vector spaces $N_{\R}$ and $N'_{\R}$. In \cite[Section 2.1]{BR}, the author considers only morphisms of generalized fans living in the same vector space.
\end{rem}
The following proposition can be found in \cite[Proposition 2.1]{BR}.
\begin{prop}\label{prop:induced-map}
Let $\Sigma, \Sigma'$ be two generalized fans in $N_{\R}$. There is a bijection between equivariant morphisms $\pi \colon X_{\Sigma'} \to X_{\Sigma}$ of toric varieties and maps of generalized fans $\mu \colon \Sigma' \to \Sigma$. Moreover, write $\mu_{\pi}$ for the map of generalized fans corresponding to an equivariant map $\pi$. Then
\begin{enumerate}
\item[(i)] $\pi$ is dominant (resp.\,a closed immersion) if and only if $\mu_{\pi}(\sigma'_0) = \sigma_0$ (resp.\,$\sigma' = \mu_{\pi}(\sigma') + L(\mu_{\pi}(\sigma'_0))$ for all $\sigma ' \in \Sigma'$).
\item[(ii)] $\pi$ is proper if and only of $|\Sigma'| = |\Sigma| + \sigma_0'$.
\end{enumerate}
\end{prop}
\begin{exa}
The map from Example \ref{exa:map-gen} induces an equivariant closed immersion between the toric variety associated to $\Sigma(\tau)$, i.\,e.\,the closure of the torus orbit corresponding to $\tau$ in $\Sigma$ (considered as a toric variety in the sense of Brion) and the toric varity of $\Sigma$.
\end{exa}
%\begin{rem}\label{rem:induced-map}
%By definition, a morphism of generalized fans $\mu \colon \Sigma' \to \Sigma$ induces a toric map of toric varieties $\pi_{\mu}\colon X_{\Sigma'} \to X_{\Sigma}$ in the sense of \cite[Definition 3.3.3]{CLS}. Conversely, any toric morphism $\pi \colon X_{\Sigma'} \to X_{\Sigma}$ is associated to a map of non-degenerate fans $\overline{\mu_{\pi}}\colon |\Sigma'| \to |\Sigma|$ which lifts to a morphism of the generalized fans $\mu_{\pi}\colon \Sigma' \to \Sigma$. The map $\pi_{\mu}$ is proper if and only if $|\Sigma'| = \mu^*|\Sigma| + \sigma'_0$.
%\end{rem}
%\begin{exa}\label{exa:star-morphism}
%In the setting of Example \ref{exa:star}, the map $\mu \colon \Sigma(\tau) \to \Sigma$ given by $L(\tau) \mapsto \tau$ and mapping $\sigma + L(\tau) \in \Sigma(\tau)$ to $\sigma$ is a morphism of generalized fans. It is moreover proper. It corresponds to the closed immersion $\mathbb{P}^1 \simeq V(\tau) \hookrightarrow X_{\Sigma} \simeq \mathbb{P}^2$, where $V(\tau)$ denotes the invariant subvariety corresponding to $\tau$.
%\end{exa}
\subsubsection*{Rings of piecewise polynomial functions}
\begin{Def}\label{def:piecewisepoly} Let $\Sigma$ be a generalized fan in $N_{\R}$. A map $f \colon |\Sigma| \to \Q$ is said to be \emph{piecewise polynomial} if for any $\sigma \in \Sigma$, the restriction $f|_{\sigma} \colon \sigma \to \Q$ extends to a polynomial function $f_{\sigma}$ on the linear space $L(\sigma)$. In other words, a piecewise polynomial function on $\Sigma$ is a collection of polynomial functions $f_{\sigma} \colon L(\sigma) \to \Q$ such that $f_{\sigma}|_{L(\tau)} = f_{\tau}$ whenever $\tau$ is a face of $\sigma$. 
\end{Def}

Note that a piecewise polynomial function is continuous. The set of piecewise polynomial function on $\Sigma$ is denoted by $PP(\Sigma)$. We endow it with a ring structure given by pointwise addition and multiplication.

A piecewise polynomial function $f$ decomposes uniquely as a sum of homogeneous piecewise polynomial functions $f = \sum_kf_k$, where $f_k = (f_{\sigma, k})$ are the homogeneous components of degree $k$ of the $f_{\sigma}$'s. We write $PP^k(\Sigma)$ for the set of piecewise polynomial functions on $\Sigma$ of degree $k$. Then $PP(\Sigma)$ has a structure of a graded ring 
\[
PP^*(\Sigma)  = \bigoplus_{k \geq 0}PP^k(\Sigma).
\]

Denote by $PP^*(N_{\Q})$ the algebra of polynomial functions on $N_{\Q}$ (seen as a generalized fan). The decomposition of piecewise polynomial functions as sums of their homogeneous components defines a grading on $PP^*(\Sigma)$ with $PP^*(N_{\Q})$ as a graded subalgebra.  Hence we may view $PP^*(\Sigma)$ as an $PP^*(N_{\Q})$-module. 

We now recall from \cite[Section 1.2]{BR} a set of generators of $PP^*(\Sigma)$ as an $PP^*(N_{\Q})$-module in the case that $\Sigma$ is simplicial.  

\begin{Def} Let  $\Sigma$ be a simplicial generalized fan. Recall that $\Sigma(k)$ denotes the set of cones of dimension $\dim \sigma_0 + k$. For each cone $\tau \in \Sigma(1)$ we write $v_{\tau}$ for a representative in $\tau \cap N$ of the unique generator of the quotient $(\tau \cap N)/(\sigma_0 \cap N)$. 
If $\sigma \in \Sigma$ contains $\tau$, then there exists a unique maximal face $\tau'$ of $\sigma$ which does not contain $\tau$. Moreover,  
there exists a unique linear function $\varphi_{\sigma, \tau}\colon L(\sigma) \to \Q$ such that $\varphi_{\tau, \sigma}$ is identically zero on $\tau'$ and such that $\varphi_{\tau, \sigma}(v_{\tau}) = 1$. Since $\sigma_0 \subset \tau'$ it follows that $\varphi_{\tau, \sigma}$ vanishes on $\sigma_0$ and thus does not depend on the choice of $v_{\tau}$. 
\end{Def}

The linear forms $\varphi_{\tau,\sigma}$ glue together to a piecewise linear function $\varphi_{\tau}$ on the star $\Sigma(\tau)$ of $\tau$ (as defined at the beginning of the section). Moreover, we may view $\varphi_{\tau}$ as a function on the whole generalized fan $\Sigma$ by setting it to be zero outside the cones containing $\tau$. Thus, we may view $\varphi_{\tau}$ as an element in $PP^1(\Sigma)$. It follows from Proposition \ref{prop:generators} below that the piecewise linear functions $\varphi_{\tau}$, for $\tau \in \Sigma(1)$, generate the whole algebra $PP^*(\Sigma)$. 

\begin{Def}\label{def:generators}
Let  $\Sigma$ be a simplicial generalized fan. For any cone $\sigma \in \Sigma$ we set $\varphi_{\sigma} = \prod_{\tau \in \sigma(1)}\varphi_{\tau}$. Then $\varphi_{\sigma}$ is a homogeneous piecewise polynomial function on $\Sigma$ of degree $\dim(\sigma)-\dim(\sigma_0)$. 
\end{Def}

 The following is \cite[Corollary~1.2]{BR}.

\begin{prop}\label{prop:generators}
Let  $\Sigma$ be a simplicial generalized fan. Then $PP^*(\Sigma)$ is generated (as a $PP^*(N_{\Q})$-algebra) by the $\varphi_{\sigma}$ for $\sigma \in \Sigma$. In particular, it is finite as a $PP^*(N_{\Q})$-algebra, generated by finitely many elements of degree one. 
\end{prop}

\subsubsection*{Pullback and pushforward}\label{subsec:pull}

Consider two regular generalized fans $\Sigma$, $\Sigma'$ in $N_{\R}$ and a morphism of generalized fans $\mu \colon \Sigma' \to \Sigma$ as in Definition \ref{def:morphism-gen-fan}. By Proposition~\ref{prop:induced-map}, this induces an equivariant morphism of toric varieties $\pi\colon X_{\Sigma'} \to X_{\Sigma}$. Note that for $\sigma' \in \Sigma'$, the linear span $L(\sigma')$ of $\sigma'$ is contained in the linear span $L(\mu(\sigma'))$ of $\mu(\sigma')$. Hence, any polynomial function on $\mu(\sigma')$ defines a polynomial function on $\sigma'$.  Hence, we can make the following definition.

\begin{Def}\label{def:pullback} The \emph{pullback} map
\[
\pi^* \colon PP^*(\Sigma) \longrightarrow PP^*(\Sigma').
\]
 is the morphism of graded algebras over $PP^*(N_{\Q})$ (where $PP^*(N_{\Q})$ acts on $PP^*(\Sigma')$ via $\pi^*$) determined by 
 \[
 f = (f_{\sigma})_{\sigma \in \Sigma} \longmapsto \pi^*f = \left(\pi^*f\right)_{\sigma' \in \Sigma'},
 \]
 where $ \left(\pi^*f\right)_{\sigma'} = f_{\mu(\sigma')}$.
\end{Def}

If $\pi$ is proper then we also have a \emph{pushforward} map.

\begin{Def}\label{def:push}
Assume that $\pi$ is proper. The \emph{pushforward} map
\[
\pi_*\colon PP^*(\Sigma') \longrightarrow PP^*(\Sigma)
\]
is the morphism of $PP^*(N_{\Q})$-modules determined by the following properties
 \begin{enumerate}
\item[(i)] For any $\sigma' \in \Sigma'$ we have 
\[
\pi_*\varphi_{\sigma'}= \begin{cases} \varphi_{\sigma} & \; \text{ if }\mu(\sigma') = \sigma \; \text{ and }\codim (\sigma') =  \codim (\sigma)  \\ 0 & \; \text{ otherwise}.\end{cases}
\]
%Here, the notation $\codim({\overline{\sigma'}})$ and $\codim({\overline{\sigma}})$ refers to the fact that we take the codimension in the fans $\overline{\Sigma'}$ and $\overline{\Sigma}$, respectively. In other words, the condition means that the corresponding torus orbits inside $X_{\Sigma'}$ and $X_{\Sigma}$, respectively, have the same dimension.
\item[(ii)] Assume that for all maximal cones $\sigma$ in $\Sigma$ and $\sigma'$ in $\Sigma'$ we have that $\codim(\sigma) = \codim(sigma')$, then for $f = (f_{\sigma'})_{\sigma' \in \Sigma'} \in PP^*(\Sigma')$ we have
\[
\left(\pi_*f\right)_{\sigma} = \varphi_{\sigma}\sum_{\mu(\sigma')= \sigma} \varphi_{\sigma'}^{-1}f_{\sigma'}
\]
for any maximal cone $\sigma \in \Sigma$.
\end{enumerate}
\end{Def}
%\begin{rem}
%If $N = N'$ then the pullback and pushforward maps agree with the ones in \cite[Theorem 2.3]{BR}.  
%\end{rem}

\begin{exa}\label{exa:push-pull}
This is a generalization of Example \ref{exa:map-gen}. Let $\Sigma$ be a non-degenerate fan and let $\gamma \in \Sigma$ be a cone. Consider the torus orbit closure $V(\gamma)$ and the closed immersion 
\[
\pi\colon V(\gamma) \hookrightarrow X_{\Sigma},
\]
The corresponding morphism of generalized fans is 
\[
\mu \colon \Sigma(\gamma) \to \Sigma,
\]
given by $L(\gamma) \mapsto \gamma$ and $ \sigma + L(\gamma) \mapsto \sigma$.

\begin{itemize}
\item Let $f = (f_{\sigma})  \in PP^*(\Sigma)$. Then 
\[
(\pi^*f)_{\sigma'} = f_{\sigma}
\]
where $\sigma' = \sigma + L(\gamma)$.
\item On the other hand, for $\sigma' = \sigma + L(\gamma) \in \Sigma(\gamma)$ we have 
\[
\pi_*(\varphi_{\sigma'}) = \varphi_{\sigma}.
\]

\end{itemize}
\end{exa}
\subsubsection*{Equivariant operational Chow cohomology groups of toric varieties}
For any smooth and projective variety $Y$ endowed with a torus action $T \curvearrowright Y$ we write $\CH_T^*(Y)$ for the equivariant Chow cohomology group, graded by codimension. It comes with a graded ring structure given by intersection product (see \cite[Section 6.6]{Brion-equi}).

Now, let $N$ be the cocharacter lattice of a torus $\T_F$ over some field $F$ and let $\Sigma$ be a smooth complete generalized fan in $N_{\R}$.

For $\sigma \in \Sigma$ we write $O(\sigma)$ for the corresponding torus orbit.

The following theorem is a reformulation of \cite[Proposition 2.2] {BR} (see also \cite[Theorem~1]{Payne-equi}).

\begin{theorem}\label{th:chow-payne}
\begin{enumerate}
\item The equivariant Chow ring $\CH^*_{\T_F}(O(\sigma))$ is an algebra isomorphic to $PP^*(L(\sigma))$. 
\item Write $\iota_{\sigma}$ for the inclusion of $O(\sigma)$ in $X_{\Sigma}$.  Then the map 
\[
\bigoplus_{\sigma \in \Sigma}\iota_{\sigma}^* \colon \CH^*_{\T_F}(X_{\Sigma}) \to \bigoplus_{\sigma \in \Sigma}\CH^*_{\T_F}(O(\sigma))
\]
is injective and its image is isomorphic to $PP^*(\Sigma)$. 
\item The above isomorphism maps the class of the torus orbit closure $V(\sigma)$ to $\varphi_{\sigma}$. It follows that the $PP(N_{\Q})$-modules $\CH^*_{\T_F}(X_{\Sigma})$ and $PP^*(\Sigma)$ are isomorphic.
\end{enumerate}

\end{theorem}
\begin{rem}
Given an equivariant morphism $\pi \colon X_{\Sigma'} \to X_{\Sigma}$ of smooth, complete toric varieties, one has a pullback map of equivariant Chow cohomology rings
\[
\pi^* \colon \CH^*_{\T_F}(X_{\Sigma}) \longrightarrow \CH^*_{\T_F}(X_{\Sigma'}),
\]
and if $\pi$ proper, one has a pushforward map 
\[
\pi_* \colon \CH^*_{\T_F}(X_{\Sigma'}) \longrightarrow \CH^*_{\T_F}(X_{\Sigma}).
\]
As we mentioned before, the pushforward map does not preserve the product structure, it defines only a group morphism. 

Under the identification in Theorem \ref{th:chow-payne}, the maps $\pi^*$ and $\pi_*$ above agree with the combinatorial maps from Definitions \ref{def:pullback} and \ref{def:push}. (see \cite[Proposition~3.2]{BR}).  
\end{rem}

\subsection{Equivariant operational Chow groups of $\T_S$-schemes}

We consider the category of $\T_S$-schemes over $S$ (that is, $S$-schemes with an action of $\T_S$ together with equivariant $S$-morphisms). In Subsection \ref{subsec:hom-schemes} we considered the homological equivariant Chow groups $\CH_*^{\T_S}(\X)$ of a $\T_S$-scheme $\X$. We now consider the cohomological version thereof, namely, the equivariant operational Chow groups in the sense of \cite[Section 6.2]{EG}. Before we recall the definition, we remark that most of the results for equivariant homological Chow groups over a field hold also for schemes over a DVR, for example the functorial properties with respect to (equivariant) proper, flat and l.\,c.\,i. maps hold (see \cite[pg.\,36 (1)]{EG}). In particular, if $f \colon \X \to \mathcal{Y}$ is an equivariant regular embedding (or equivariant l.\,c.i.\,morphism) of codimension $d$ of $\T_S$-schemes over $S$, then we have a collection of homomorphisms 
\[
f^! \colon \CH^{\T_S}_k\mathcal{Y}' \longrightarrow \CH^{\T_S}_{k -d}\X'
\]
for all $\mathcal{Y}' \to \mathcal{Y}$, $\X' = \X \times_{\mathcal{Y}} \mathcal{Y}'$ and all $k$. The equivariant operational Chow groups studies the class of such operations.

Fix $f\colon \ca X\to \ca Y$ an equivariant morphism of $\T_S$-schemes over $S$.  Then for any equivariant morphism of $\T_S$ schemes $g \colon \ca Y' \to \ca Y$ we let $\ca X'$ denote the fiber square
%\begin{figure}[H]
\begin{center}
    \begin{tikzpicture}
      \matrix[dmatrix] (m)
      {
        \X' & \mathcal{Y}'\\
       \X & \mathcal{Y} \\
      };
      \draw[->] (m-1-1) to node[above]{$f'$} (m-1-2);
      \draw[ ->] (m-1-1) to node[left]{$g'$}(m-2-1);
      \draw[->] (m-1-2)to node[right]{$g$} (m-2-2);
      \draw[->] (m-2-1) to node[above]{$f$} (m-2-2);
      
     \end{tikzpicture}
     \end{center}
   %  \end{figure} 
     Let $k$ be an integer. An element $c \in \CH_{\T_S}^k(f \colon \ca X \to \ca Y)$ is a collection of homomorphisms 
\[
c_g^m \colon \CH_m^{\T_S}(\ca Y') \longrightarrow \CH_{m-k}^{\T_S}(\ca X'/S)
\]
 for all  $m \in \Z$ and for all equivariant morphisms of $\T_S$ schemes $g \colon \ca Y' \to \ca Y$. These operations have to be  compatible with proper pushforward, flat pull-back and l.\,c.\,i.\,maps in the following way:
 \begin{enumerate}
 \item[(i)] If $h \colon \ca Y'' \to \ca Y'$ is an equivariant and proper map of $\T_S$-schemes, and $g \colon \ca Y' \to \ca Y$ any equivariant map of $\T_S$-schemes, and one forms the fiber diagram 
% \begin{figure}[H]
\begin{center}
    \begin{tikzpicture}
      \matrix[dmatrix] (m)
      {
      \X'' & \ca Y'' \\
        \X' & \mathcal{Y}'\\
       \X & \mathcal{Y} \\
      };
      \draw[->] (m-1-1) to node[above]{$f''$} (m-1-2);
      \draw[ ->] (m-1-1) to node[left]{$h'$}(m-2-1);
      \draw[->] (m-1-2)to node[right]{$h$} (m-2-2);
      \draw[->] (m-2-1) to node[below]{$f'$} (m-2-2);
       \draw[ ->] (m-2-1) to node[left]{$g'$}(m-3-1);
      \draw[->] (m-2-2)to node[right]{$g$} (m-3-2);
      \draw[->] (m-3-1) to node[below]{$f$} (m-3-2);
      
     \end{tikzpicture}
     \end{center}
   %  \end{figure} 
     then for all $\alpha \in \CH_m^{\T_S}(\ca Y'')$, we have that
     \[
     c_g^m\left(h_*(\alpha)\right) = h_*'c_{gh}^m(\alpha)
     \]
     in $\CH^{\T_S}_{m -k}(\X')$. 
     \item[(ii)] If $h \colon \ca Y'' \to \ca Y$ is an equivariant and flat map of $\T_S$-schemes of relative dimension $\ell$, and $g \colon \ca Y' \to \ca Y$ is any equivariant map, and one forms the diagram as above, then for all $\alpha \in \CH_m\ca Y'$ we have that 
     \[
     c_{gh}^{m + \ell}\left(h^*\alpha\right) = {h'}^*c_g^m(\alpha)
     \]
     in $\CH_{m + \ell - k}^{\T_S}(\X'')$. 
     \item[(iii)] If $g \colon \ca Y' \to \ca Y$ and $h \colon \ca Y \to \ca Z'$ are equivariant morphisms of $\T_S$-schemes, and $\iota \colon \ca Z'' \to \ca Z'$ is an equivariant regular embedding of codimension $e$ lf $\T_S$-schemes, and one forms the fiber diagram 
    % \begin{figure}[H]
    \begin{center}
    \begin{tikzpicture}
      \matrix[dmatrix] (m)
      {
      \X'' & \ca Y''& \ca Z'' \\
        \X' & \mathcal{Y}' & \ca Z'\\
        \X & \ca Y & \\
      };
      \draw[->] (m-1-1) to node[above]{$f''$} (m-1-2);
       \draw[->](m-1-2) to node[above]{$h'$} (m-1-3);
      \draw[ ->] (m-1-1) to node[left]{$\iota''$}(m-2-1);
      \draw[->] (m-1-2)to node[right]{$\iota'$} (m-2-2);
      \draw[->] (m-2-1) to node[below]{$f'$} (m-2-2);
       \draw[ ->] (m-2-1) to node[left]{$g'$}(m-3-1);
      \draw[->] (m-2-2)to node[right]{$g$} (m-3-2);
      \draw[->] (m-3-1) to node[below]{$f$} (m-3-2);
      \draw[->] (m-2-2) to node[below]{$h$}(m-2-3);
      \draw[->] (m-1-3) to node[right]{$\iota$}(m-2-3);
      
     \end{tikzpicture}
     \end{center}
    % \end{figure} 
     then for all $\alpha \in \CH_m^{\T_S}(\ca Y')$, we have that 
     \[
     \iota^!c_g^m(\alpha) = c_{g\iota'}^{m -e}\left(\iota'\alpha\right)
     \]
     in $\CH^{\T_S}_{m-k-e}(\X'')$.

 \end{enumerate}
Conditions (i)--(iii) are the equivariant analogues of \cite[Properties $C_1$--$C_3$]{fulint}. for $\T_S$-schemes over $S$.
% he operations on equivariant Chow groups (see Remark \ref{rem:op-eq}). We refer to \cite[pg.16]{AP} for the precise statements. 
 \begin{exa} 
 \begin{enumerate}
 \item If $f \colon \mathfrak{Y} \to \mathfrak{Z}$ is an equivariant, flat morphism of $\T_S$ schemes with fiber dimension $d$, there is a canonical \emph{orientation class} 
 \[
 [f] \in \CH^{-d}_{\T_S}(f \colon \mathfrak{Y} \to \mathfrak{Z}).
 \]
 Indeed, for a morphism of $\T_S$-schemes $\X \to \mathfrak{Z}$ and $\alpha \in \CH_k^{\T_S}(\X)$, the pullback $\X \times_{\mathfrak{Z}}\mathfrak{Y} \to \X$ is equivariant and flat and one sets 
 \[
 [f](\alpha) = f^{-1}(\alpha) \in \CH^{\T_S}_{k+d}(\X\times_{\mathfrak{Z}}\mathfrak{Y}).
 \]
 \item If $f \colon \mathfrak{Y} \to \mathfrak{Z}$ is an equivariant l.\,c.\,i.\,of codimension $d$, then there is also a canonical orientation class 
 \[
 [f] \in \CH_{\T_S}^d(f \colon \mathfrak{Y} \to \mathfrak{Z}).
 \]
One can construct such an equivariant class mimicking the construction in the non-equivariant setting from \cite[Chapter~17]{fulint}.
 \end{enumerate}
 \end{exa}
 
 As in \cite[Chapter 17]{fulint}, the most salient functorial properties of equivariant operational Chow groups products,  pushforward and pullback, which we now describe.

 \subsubsection*{Products}
For all equivariant morphisms of $\T_S$-schemes $f \colon \X \to \ca Y$, $g \colon \ca Y \to \ca Z$ and for all integers $k,\ell$, there are homomorphism 
\[
\CH_{\T_S}^k(\X \xrightarrow{f} \ca Y) \times \CH_{\T_S}^{\ell}(\ca Y \xrightarrow{g} \ca Z) \longrightarrow \CH_{\T_S}^{k+\ell}(\X \xrightarrow{gf} \ca Z).
\]
The image of $(c, d)$ is denoted by $c \cdot d$ and is given in the following way. Given an equivariant morphism of $\T_S$-schemes $\ca Z' \to \ca Z$, form the fiber diagram 
\begin{figure}[H]
\begin{center}
    \begin{tikzpicture}
      \matrix[dmatrix] (m)
      {
        \X' & \mathcal{Y}' & \ca Z'\\
       \X & \mathcal {Y} &\ca Z& \\
      };
      \draw[->] (m-1-1) to node[above]{$f'$}  (m-1-2);
       \draw[->] (m-1-2) to node[above]{$g'$}  (m-1-3);
      \draw[ ->] (m-1-1)to (m-2-1);
       \draw[ ->] (m-1-2)to (m-2-2);
        \draw[ ->] (m-1-3)to (m-2-3);
      \draw[->] (m-2-1) to node[below]{$f$} (m-2-2);
       \draw[->] (m-2-2) to node[below]{$g$} (m-2-3);

     \end{tikzpicture}
     \end{center}\caption{Products}\label{fig:prod}
     \end{figure}
     
     If $\alpha \in \CH^{\T_S}_m(\ca Z')$ then $d(\alpha) \in \CH^{\T_S}_{m-\ell}(\ca Y')$ and $c(d(\alpha)) \in \CH^{\T_S}_{m-\ell-k}(\X')$. Then $c\cdot d$ is given by $c\cdot d(\alpha) = c(d(\alpha))$. 
     
     \subsubsection*{Proper pushforward}
If $f \colon \X \to \ca Y$ is an equivariant proper morphism of $\T_S$-schemes, $g \colon \ca Y \to \ca Z$ any equivariant morphism of $\T_S$-schemes and $k$ an integer, then there is a homomorphism 
\[
f_* \colon \CH_{\T_S}^k(X \xrightarrow{gf} \ca Z) \longrightarrow \CH_{\T_S}^k(\ca Y \xrightarrow{g} \ca Z).
\]
Given an equivariant morphism of $\T_S$-schemes $\ca Z' \to \ca Z$ form the fiber diagram in Figure \ref{fig:prod}. Let $c \in \CH_{\T_S}^k(X \xrightarrow{gf} \ca Z)$ and $\alpha \in \CH^{\T_S}_{m}(\ca Z')$. Then $c(\alpha) \in \CH^{\T_S}_{m-k}(\X')$ is given by $f_*(c)(\alpha) = f_*'(c(\alpha))$. 

\subsubsection*{Pullback}
Given equivariant morphisms of $\T_S$-schemes $f \colon \X \to \ca Y$ and $g \colon \ca Y_1 \to \ca Y$ form the fiber square 

%\begin{figure}[H]
\begin{center}
    \begin{tikzpicture}
      \matrix[dmatrix] (m)
      {
        \X_1 & \mathcal{Y}_1\\
       \X & \mathcal{Y} \\
      };
      \draw[->] (m-1-1) to node[above]{$f_1$}  (m-1-2);
      \draw[ ->] (m-1-1)to  (m-2-1);
      \draw[->] (m-1-2)to node[right]{$g$} (m-2-2);
      \draw[->] (m-2-1) to node[below]{$f$} (m-2-2);
      
     \end{tikzpicture}
     \end{center}
    % \end{figure}     
     Then for each $k$ there is a homomorphism 
     \[
     g^* \colon \CH_{\T_S}^k(\X \xrightarrow{f}\ca Y) \longrightarrow \CH_{\T_S}^k(\X_1 \xrightarrow{f_1}\ca Y_1).
     \] 
     Given $c \in \CH_{\T_S}^k(\X \xrightarrow{f} \ca Y)$, $\ca Y' \to \ca Y_1$ and $\alpha \in \CH^{\T_S}_m(\ca Y')$, then composing with $g$ gives an equivariant morphism of $S$-schemes $\ca Y' \to \ca Y$. Hence $c(\alpha) \in \CH^{\T_S}_{m-k}(\X')$, $\X' = \X \times_{\ca Y}\ca Y' = \X_1 \times_{\ca Y_1}\ca Y'$. Let $g^*(c)(\alpha) = c(\alpha)$. 

\begin{rem}\label{rem:axioms-ful}
Given the functorial properties of the equivariant operational Chow groups, one verifies that the operations above satisfy the equivariant analogue of the seven axioms $(A_1)$--$(A_{123})$ from \cite[Section 17.2]{fulint}.  In particular, we have 
\begin{enumerate}
\item[$(A_{13}$)] \emph{Product and pullback commute}. Let $c \in \CH^*_{\T_S}\left(\X \xrightarrow{f} \ca Y\right)$,  $d \in \CH^*_{\T_S}\left(\ca Y \xrightarrow{h} \ca Z\right)$ and $g \colon \ca Z_1 \to \ca Z$ an equivariant morphism of $\T_S$-schemes, from the fiber diagram 
%\begin{figure}[H]
\begin{center}
    \begin{tikzpicture}
      \matrix[dmatrix] (m)
      {
        \X_1 & \mathcal{Y}_1 & \ca Z_1\\
       \X & \mathcal{Y} & \ca Z \\
      };
      \draw[->] (m-1-1) to node[above]{$f'$}  (m-1-2) to node[above]{$h'$} (m-1-3);
      \draw[ ->] (m-1-1)to  (m-2-1);
      \draw[->] (m-1-2)to node[right]{$g'$} (m-2-2);
      \draw[->] (m-2-1) to node[above]{$f$} (m-2-2) to node[above]{$h$} (m-2-3);
      \draw[->] (m-1-3) to node[right]{$g$} (m-2-3);
      
     \end{tikzpicture}
     \end{center}
  %   \end{figure}     
Then 
\[
g^*(c \cdot d) = g'^*(c) \cdot g^*(d) \in \CH^*_{\T_S}(\X \xrightarrow{h' f'} \ca Z_1).
\] 
\item[$(A_{123}$)] \emph{Projection formula}.  Given a diagram 
%\begin{figure}[H]
\begin{center}
    \begin{tikzpicture}
      \matrix[dmatrix] (m)
      {
        \X' & \mathcal{Y}' & \\
       \X & \mathcal{Y} & \ca Z \\
      };
      \draw[->] (m-1-1) to node[above]{$f'$}  (m-1-2) ;
      \draw[ ->] (m-1-1)to node[left]{$g'$} (m-2-1);
      \draw[->] (m-1-2)to node[right]{$g$} (m-2-2);
      \draw[->] (m-2-1) to node[above]{$f$} (m-2-2) to node[above]{$h$} (m-2-3);

     \end{tikzpicture}
     \end{center}
   %  \end{figure}    
     with $g$ equivariant and proper,  the square a fiber square and $c \in \CH^*_{\T_S}(\X \rightarrow \ca Y)$, $d \in \CH^*_{\T_S}(\ca Y' \to \ca Z)$, then 
     \[
     c \cdot g_*(d) = g_*'(g^*(c)\cdot d) \in \CH^*_{\T_S}(\X \to \ca Z).
     \]
\end{enumerate}
\end{rem}

\begin{Def}
For any $\T_S$-scheme $\X$ and any integer $k$, the \emph{equivariant operational $k$'th cohomology group} $\CH_{\T_S}^k(\X)$ is defined by 
\[
\CH_{\T_S}^k(\X) \coloneqq \CH_{\T_S}^k(\X \xrightarrow{\on{id}} \X).
\]
\end{Def}
In other words, an element in $\CH_{\T_S}^k(\X)$ can be seen as a rule which associates to each equivariant morphism of $\T_S$-schemes $g\colon \mathcal{X}'\to\mathcal{X}$ and to each $\alpha\in\CH^{\T_S}_m(\mathcal{X}')$ a class~$c_g(\alpha)\in\CH_{m-k}(\mathcal{X}')$, satisfying the above mentioned compatibility properties (i)--(iii).

Set 
\[
\CH_{\T_S}^*(\X) \coloneqq \bigoplus_{k \in \Z}\CH_{\T_S}^k(\X).
\] 

\begin{itemize}
\item there are \emph{cup products} 
\[
\CH_{\T_S}^k(\mathcal{X}) \times \CH_{\T_S}^{\ell}(\mathcal{X}) \longrightarrow \CH_{\T_S}^{k+\ell}(\mathcal{X})
\]
given by 
\[
(c,d) \longmapsto c \cup d \coloneqq c\cdot d.
\]
\item There is an element $1 \in \CH_{\T_S}^0(\X)$ acting as the identity on $\CH_{\T_S}^k(\X)$ for any $k$.

This makes $\CH_{\T_S}^*(\mathcal{X})$ into an associative graded ring with one. Moreover, given that we have tensored with $\Q$, this ring is moreover commutative. This follows from the commutativity in the non-equivariant setting, which can be deduced from \cite[p.~335 and Proposition 3.8]{KT} and \cite[Example 17.3.2]{fulint}.
\item For any morphism of $\T_S$-schemes $g \colon \mathcal{X}' \to \mathcal{X}$, the pullback $g^* \colon \CH_{\T_S}^*(\mathcal{X}) \to \CH_{\T_S}^*(\mathcal{X}')$ is a ring homomorphism, functorial in $g$.  This follows from Axiom $(A_{13})$ (see Remark \ref{rem:axioms-ful}).
\item There are \emph{cap products}
\[
\CH_{\T_S}^k(\mathcal{X}) \times \CH^{\T_S}_m(\mathcal{X}) \longrightarrow \CH^{\T_S}_{m-k}(\mathcal{X}) 
\]
given by
\[
c \times \alpha \longmapsto c \cap \alpha \coloneqq c(\alpha).
\]
This makes $\CH^{\T_S}_*(\mathcal{X})$ into a left $\CH_{\T_S}^*(\mathcal{X})$-module.
\item Let $f \colon \mathcal{X}' \to \mathcal{X}$ be an equivariant proper morphism of $\T_S$-schemes. The projection formula (see Remark \ref{rem:axioms-ful})
\[
f_*\left(f^* \beta \cap \alpha \right) = \beta \cap f_*(\alpha)
\]
for $\alpha \in \CH^{\T_S}_*(\mathcal{X}')$ and $\beta \in \CH_{\T_S}^*(\mathcal{X})$ is a homomorphism of $\CH^*_{\T_S}(\X)$-modules. 
\end{itemize}

We start with the following Poincaré duality statement for regular proper $\T_S$-schemes.
Recall that we have denoted $\widetilde{S} = \CH_*^{\T_S}$. 

\begin{theorem}\label{th:dual2}
Let $\X$ be a regular, proper $\T_S$-scheme of (absolute) dimension $n+1$. Assume that the equivariant Chow homology groups are generated (as $\widetilde{S}$-modules) by the classes of finitely many invariant cycles. Then for any integer $k$, the natural map 
\begin{eqnarray}\label{equ}
\alpha\colon \CH^k_{\T_S}(\X) \longrightarrow \CH^{\T_S}_{n+1-k}(\X)
\end{eqnarray}
given by 
\[
c \longmapsto c[\X]
\]
is an isomorphism. In particular, the result holds for regular proper toric schemes over $S$.
\end{theorem}
\begin{proof}
We expand and adapt the arguments given in \cite[pg.~335]{KT} to the equivariant setting.
Let $f\colon \mathfrak{Y} \to \X$ be an equivariant morphism of $\T_S$-schemes. One says that $f$ is \emph{equivariently oriented} if it is equipped with a class $[f] \in \CH_{\T_S}^*( \mathfrak{Y} \xrightarrow{f} \X)$ such that $[f]\left([\X]\right) = [\mathfrak{Y}]$.

By Proposition \ref{prop:orienting} we may assume that any equivariant projective map $f \colon V \to \X$ of $\T_S$-schemes is equivariantly oriented.  
Using the equivariant Chow's lemma in \cite[Corollary 5.3.7]{Brion-lin}, we may further assume that any equivariant map to $\X$ has an orientation.

Now, define a $\widetilde{S}$-linear map
\[
\beta \colon \CH^{\T_S}_{n+1-k}(\X) \longrightarrow \CH^k_{\T_S}(\X)
\]
by sending the class $[V]$ of an invariant cycle $V$ in $\X$ to 
\[
{j_V}_*[j_V] \in \CH^k_{\T_S}(\X),
\]
where $j_V \colon V \hookrightarrow \X$ denotes the equivariant closed immersion and $[j_V] \in \CH^k_{\T_S}(j_V \colon V \to \X)$ an orientation class, which exists by assumption. 

We show that $\beta$ is well-defined. Suppose that $[V] = [V']$ in $\CH^{\T_S}_{n+1-k}(\X)$. 

We have to show that 
\[
{j_V}_*[j_V] = {j_{V'}}_*[j_{V'}]
\]
in $\CH^k_{\T_S}(\X)$. Let thus $h \colon \ca Z \to \X$ been equivariant map of $\T_S$-schemes. Using the projection formula we compute
\[
{j_V}_*[j_V][\ca Z] = {j_V}_*[j_V]h^*[\X] = h^*{j_V}_*[j_V][\X] = h^*[j_V] = h^*[j_{V'}] = \dotsc = {j_{V'}}_*[j_{V'}][\ca Z],
\]
which show that $\beta$ is well-defined. By construction its inverse map is $\alpha$. Hence the map \eqref{equ} this proving the statement in the case that any projective equivariant morphism has an orientation class.
The result then follows from the following Proposition. 
\end{proof}
\begin{prop}\label{prop:orienting}
Let $\X$ be a regular, proper $\T_S$-scheme of (absolute) dimension $n+1$. Then any projective equivariant morphism $f \colon V \to \X$ of $\T_S$-schemes is equivariantly oriented with orientation class $[f] \in \CH_{\T_S}^*(f \colon \mathfrak{Y} \to \X)$.
\end{prop}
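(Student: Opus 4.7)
The plan is to adapt the classical argument establishing orientation for regular schemes (see \cite[Chapter~17.4]{fulint} and the discussion in \cite[p.~335]{KT}) to the equivariant setting. The Edidin--Graham approximations in \cite[Section~6.2]{EG} can be chosen so that the mixed quotients $\X\times^{\T_S}U$ remain regular (since $U$ is smooth and the $\T_S$-action on $U$ is free), so every construction valid in the non-equivariant regular category descends to the equivariant category of $\T_S$-schemes.

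\textbf{Existence.} Given $f\colon \mathfrak{Y}\to \X$, factor it as
\[
\mathfrak{Y} \xrightarrow{\Gamma_f} \mathfrak{Y}\times_S\X \xrightarrow{p_2} \X,
\]
where $\Gamma_f$ is the graph and $p_2$ the second projection; both are $\T_S$-equivariant for the diagonal action on $\mathfrak{Y}\times_S\X$. Since $\mathfrak{Y}$ is integral of finite type over the DVR $R$, it is flat over $S$, hence $p_2$ is flat and carries a canonical equivariant orientation class $[p_2]$ by the flat example recalled above. On the other hand, $\Gamma_f$ is the base change of the diagonal $\Delta_{\X/S}$, which is a regular closed immersion (working with $\Q$-coefficients, as assumed throughout) because $\X$ is regular; hence $\Gamma_f$ is a local complete intersection morphism and carries a canonical equivariant orientation class $[\Gamma_f]$ by the l.c.i.\ example. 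Define $[f] := [\Gamma_f]\cdot[p_2]$ in $\CH_{\T_S}^*(f\colon \mathfrak{Y}\to\X/S)$. A short computation with flat pullback and Gysin pullback yields $[p_2]([\X]) = [\mathfrak{Y}\times_S\X]$ and $[\Gamma_f]([\mathfrak{Y}\times_S\X]) = [\mathfrak{Y}]$, so $[f]([\X]) = [\mathfrak{Y}]$.

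\textbf{Uniqueness.} If $c$ is a second orientation class for $f$, set $c' := c - [f]$, so that $c'([\X]) = 0$. We must show $c'_g(\alpha) = 0$ for every morphism $g \colon \X'\to\X$ of $\T_S$-schemes and every $\alpha \in \CH^{\T_S}_*(\X'/S)$. By linearity and proper pushforward along closed immersions of invariant subvarieties of $\X'$, it suffices to treat $\alpha = [\X']$ with $\X'$ integral. Factoring $g$ through its own graph and using the bivariant compatibility of $c'$ with flat pullback and Gysin pullback, one rewrites $c'_g([\X'])$ as the value of a suitable pullback of $c'$ on $[\X]$, which vanishes. This is the equivariant analogue of Kleiman's argument that a bivariant class on a regular scheme is determined by its value on the fundamental class.

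\textbf{Main obstacle.} The principal technical difficulty lies in the uniqueness step, where the bivariant compatibility axioms must be applied carefully through the graph factorization of $g$ in order to reduce $c'_g([\X'])$ to $c'([\X])$. One has to verify that the orientation classes of the factors of $g$ behave well under the Edidin--Graham approximation, i.e.\ that the induced morphisms between mixed quotients retain the flat resp.\ local complete intersection structure, and that the bivariant product is compatible with these approximations. Once this is settled, Kleiman's uniqueness argument applied on the regular mixed quotients $\X\times^{\T_S}U$ descends directly to the equivariant statement on $\X$.
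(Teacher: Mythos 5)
Your existence argument breaks down at the claim that the graph $\Gamma_f \colon \mathfrak{Y} \to \mathfrak{Y}\times_S\X$ is a regular closed immersion because $\X$ is regular. The graph is the base change of the relative diagonal $\Delta_{\X/S}\colon \X \to \X\times_S\X$, and that diagonal is a regular immersion precisely when $\X \to S$ is \emph{smooth}, not merely when $\X$ is a regular scheme. A regular proper $\T_S$-scheme over the DVR $R$ (e.g.\ a regular toric model $\X_{\Pi}$ other than the canonical one) is regular as a scheme but in general not smooth over $S$: its special fiber is typically reducible and possibly non-reduced, $\X\times_S\X$ fails to be regular along the diagonal, and $\Gamma_f$ is not a local complete intersection. (A minimal example: $\X=\on{Spec}\bigl(R[x]/(x^2-\varpi)\bigr)$ is regular, yet the diagonal ideal in $\X\times_S\X$ is generated by the zerodivisor $x\otimes 1-1\otimes x$.) Tensoring the Chow groups with $\Q$ does not repair this, since the failure is geometric. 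This is exactly the obstruction that forces the longer argument in the paper: one first reduces, via the equivariant Chow lemma, to $f\colon V\to\X$ projective with $V$ integral, factors through the scheme-theoretic image, applies generic flatness and the equivariant flattening technique (an equivariant $U$-admissible blow-up) to make the map flat, and then invokes de Jong's alterations to replace the base by a \emph{regular} integral scheme $f(V)''$. Only at that point is $\omega\colon f(V)''\to\X$ a projective morphism between regular schemes, hence l.c.i.\ with a canonical orientation class, and the orientation of $f$ is assembled as $\frac{1}{d}\omega'_*\bigl([f']\cdot[\omega]\bigr)$; the $\Q$-coefficients are needed precisely to divide by the degree $d$ of the alteration, not to make the diagonal regular.

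A secondary point: you locate the main difficulty in the uniqueness step, but your uniqueness argument also routes through a graph factorization of $g$ and therefore inherits the same gap. In the scheme of Kleiman that the paper follows, uniqueness is essentially formal once existence is known for \emph{every} integral scheme projective over $\X$ (combined with linearity, proper pushforward, and the bivariant compatibilities); the genuine work of the proposition is entirely in the existence construction sketched above.
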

\begin{proof}

We have to show that for any such $f$ there exists an orientation class $[f] \in \CH^*_{\T_S}(f \colon V \to \X)$ such that $[f]\left([\X]\right) = [V]$. We factor the equivariant projective morphism $f$ through its scheme theoretic image $f(V)$, so we get a diagram of equivariant morphisms of $\T_S$-schemes 
%\begin{figure}[H]
\begin{center}

    \begin{tikzpicture}
 \matrix[dmatrix] (m)
      {
       & V\\
       \X & f(V)\\
      };
      \draw[->] (m-1-2) to node[above]{$f$}  (m-2-1);
      \draw[ ->] (m-2-2)to  (m-2-1);
      \draw[->] (m-1-2)to node[right]{$\tilde{f}$} (m-2-2);
        \end{tikzpicture}
     \end{center}
    % \end{figure}
with $\tilde{f}\colon V \to f(V)$ equivariant, dominant and projective, and $f(V) \to \X$ an equivariant closed embedding. By \cite[IV Theorem 6.9.1]{EGA} we know that there is a non-empty subset $U \subseteq f(V)$ such that 
     \[
     \tilde{f}|_{\tilde{f}^{-1}(U)} \colon \tilde{f}^{-1}(U) \longrightarrow U
     \]
     is flat. With this, we use the ``equivariant flattening technique'' (see \cite[Theorem 8]{TV}) to extend the above diagram to
 %\begin{figure}[H]
\begin{center}
    \begin{tikzpicture}
      \matrix[dmatrix] (m)
      {
       & V& Z\\
       \X & f(V) & W\\
      };
      \draw[->] (m-1-2) to node[above]{$f$}  (m-2-1);
      \draw[ ->] (m-2-2)to  (m-2-1);
      \draw[->] (m-1-2)to node[right]{$\tilde{f}$} (m-2-2);
        \draw[->] (m-1-3)to node[right]{$h$} (m-2-3);
          \draw[->] (m-2-3)to node[below]{$g$} (m-2-2);
            \draw[ ->] (m-1-3)to  (m-1-2);

     \end{tikzpicture}
     \end{center}
   %  \end{figure}    
     where $h$ is an equivariant and flat morphism of $\T_S$-schemes, and $g$ an equivariant $U$-admissible blow up, i.e. $g$ is an equivariant morphism of $\T_S$-schemes given as a blow up along a $\T_S$-stable closed subscheme $Z$ with $Z \cap U = \emptyset$. In particular, $W$ is integral and $g$ is an equivariant, birational and projective morphism of $\T_S$-schemes. 
     
       We now use equivariant semi-stable reduction over $S$, for which we refer to \cite{lore-thesis} and we
 
       find an equivariant projective morphism of $\T_S$-schemes 
     \[
     \phi \colon W' \to W',
     \]
     (possibly after a base extension $S_1 \to S$), generically finite of degree $d$, with $W'$ regular. Now we expand further the diagram to
    % \begin{figure}[H]
\begin{center}
    \begin{tikzpicture}
      \matrix[dmatrix] (m)
      {
       & V& Z & Z'\\
       \X & f(V) & W & W'\\
      };
      \draw[->] (m-1-2) to node[above]{$f$}  (m-2-1);
      \draw[ ->] (m-2-2)to  (m-2-1);
      \draw[->] (m-1-2)to node[right]{$\tilde{f}$} (m-2-2);
        \draw[->] (m-1-3)to node[right]{$h$} (m-2-3);
          \draw[->] (m-2-3)to node[below]{$g$} (m-2-2);
          \draw[->] (m-2-4)to node[below]{$\phi$} (m-2-3);
            \draw[ ->] (m-1-3)to  (m-1-2);
              \draw[ ->] (m-1-4)to  (m-1-3);
                \draw[ ->] (m-2-4)to  (m-2-3);
                \draw[->] (m-1-4)to node[right]{$f'$} (m-2-4);
                  \draw[->, bend right =25] (m-1-4) to node[above]{$\omega'$}  (m-1-2);
                    \draw[->, bend left =25] (m-2-4) to node[below]{$\omega$}  (m-2-1);

     \end{tikzpicture}
     \end{center}
   %  \end{figure}    
     with maps as labeled. Then all morphism are equivariant, $\omega$ is projective, $f'$ is flat and $\omega'$ is proper and generically finite of degree $d$. Moreover, since $W'$ and $\X$ are regular, and since equivariant projective morphisms between regular schemes are equivariant local complete intersections (l.c.i.)we have that $\omega$ is an equivariant l.c.i. morphism. Hence it has an equivariant canonical orientation class $[\omega] \in \CH_{\T_S}^*(\omega \colon W' \to \X)$. The equivariant flat morphism $f'$ induces also a canonical orientation class $[f'] \in \CH_{\T_S}^*\left(f' \colon Z' \to W'\right)$. Consider now the class
     \[
     [f] \coloneqq \frac{1}{d}\omega'_*\left([f']\cdot [\omega]\right) \in \CH^*_{\T_S}(f \colon V \to \X).
     \]
     We check that $[f]$ is an orientation class:
\begin{align*}
[f]\left([\X]\right) =& \left(\frac{1}{d}\omega'_*\left([f']\cdot [\omega]\right)\right)\left([\X]\right)\\
=& \frac{1}{d}\omega'_*[f'][\omega]\left([\X]\right) \\
=& \frac{1}{d}\omega_*'[f']\left([W']\right) \\
=& \frac{1}{d}\omega'_*\left[Z'\right] \\
=& \frac{1}{d}\cdot d \cdot [V] = [V].
\end{align*}
 
\end{proof}

Let $f \colon \mathfrak{Y} \to \mathfrak{X}$ be an equivariantly oriented map of $\T_S$-schemes with orientation $[f] \in \CH_{\T_S}^{\ell}(f \colon \mathfrak{Y} \to \mathfrak{X})$. We denote by  
     \[
     f^! \colon \CH_k^{\T_S}(\mathfrak{X}) \longrightarrow \CH_{k-\ell}^{\T_S}(\mathfrak{Y})
     \]
the induced map on equivariant Chow homolog. If $f$ is moreover proper, we write
     \[
    f_! \colon \CH_{\T_S}^k(\mathfrak{Y}) \longrightarrow \CH_{\T_S}^{k+\ell}(\mathfrak{X})
    \]
  for the induced map on equivariant Chow cohomology. 
\begin{exa}\label{exa:orient}
For a toric scheme $\X$ over $S$, the special fiber $\iota\colon \X_s \hookrightarrow \X$ is a $\T_S$- invariant principal Cartier divisor so an orientation $[\iota] \in \CH_{\T_S}^1(\iota \colon \X_s \to \X)$ is defined. 

Moreover, given a map of toric models $\pi \colon \X' \to \X$ and the corresponding commutative diagram 

%\begin{figure}[H]
\begin{center}
    \begin{tikzpicture}
      \matrix[dmatrix] (m)
      {
        \X_s' & \X'\\
       \X_s & \X\\
      };
      \draw[->] (m-1-1) to node[above]{$\iota'$}  (m-1-2);
      \draw[ ->] (m-1-1)to node[left]{$\pi_s$}  (m-2-1);
      \draw[->] (m-1-2)to node[right]{$\pi$} (m-2-2);
      \draw[->] (m-2-1) to node[below]{$\iota$} (m-2-2);
      
     \end{tikzpicture}
     \end{center}
    % \end{figure}  
     all four maps are equivariantly orientable, i.e. there are orientation classes 
     \[
     [\pi] \in \CH^0_{\T_S}(\X' \to \X)\; , \; [\pi_s] \in \CH^0_{\T_S}(\X_s' \to \X_s).
     \]
These satisfy
     \begin{eqnarray*}
    & \iota^*[\pi]= [\pi_s] \in \CH^0_{\T_S}(\pi_s\colon \X_s'\to \X_s) \\
     & [\pi \iota'] = [\iota \pi_s] = [\pi_s]\cdot [\iota] = [\iota'] \cdot [\pi] \in \CH^1_{\T_s}(\X_s' \to \X).
     \end{eqnarray*}
    
     \end{exa}

\subsection{Equivariant operational Chow groups of toric schemes}\label{sec:op-toric}
We extend Theorem \ref{th:chow-payne} to the case of toric schemes over $S$ and we describe combinatorially the Poincaré duality map in Theorem \ref{th:dual2} in this case.

Fix a complete, regular SCR polyhedral complex $\Pi$ in $N_{\R}$ and let $\X = \X_{\Pi}$ the corresponding complete, regular toric scheme over $S$ (absolute) dimension $n+1$. 

Consider the conical polyhedral complex $c(\Pi)$ in $N_{\R} \oplus \R_{\geq 0}$  as in Section \ref{sec:toric-schemes}. Further, consider the ring of rational piecewise polynomial functions $PP^*(c(\Pi))$ on $c(\Pi)$ as in Definition \ref{def:piecewisepoly}.

\begin{theorem}\label{th:equi-pol}
There is an isomorphism (as $\widetilde{S}$-modules)
\[
\CH^*_{\T_S}(\X) \simeq PP^*(c(\Pi)).
\]
\end{theorem}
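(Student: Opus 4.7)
The strategy is to mimic the proof of Theorem~\ref{th:chow-payne} by constructing two mutually inverse maps of graded algebras, one obtained by restriction to torus orbits and the other by sending the natural combinatorial generators to equivariant first Chern classes of invariant Cartier divisors. First I would construct a natural map $\Psi\colon \CH^*_{\T_S}(\X/S) \to PP^*(c(\Pi))$ by restricting to the equivariant cohomology of each torus orbit. For every cone $\sigma \in c(\Pi)$, the orbit $O(\sigma)$ is a principal homogeneous space under the appropriate quotient torus, and a standard equivariant approximation argument -- parallel to the one used in Proposition~\ref{prop:chow-hom} and treating horizontal and vertical orbits uniformly -- gives $\CH^*_{\T_S}(O(\sigma)/S) \simeq \on{Sym}\widetilde{M}(\sigma)_{\Q}$. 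The restrictions $\iota_\sigma^* \colon \CH^*_{\T_S}(\X/S) \to \CH^*_{\T_S}(O(\sigma)/S)$ are automatically compatible along face inclusions $\tau \prec \sigma$, so they assemble into a ring map $\Psi$ landing in the subring $PP^*(c(\Pi)) \subseteq \bigoplus_{\sigma}\on{Sym}\widetilde{M}(\sigma)_{\Q}$.

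Next I would build a candidate inverse $\Phi\colon PP^*(c(\Pi)) \to \CH^*_{\T_S}(\X/S)$ by specifying it on generators. By Proposition~\ref{prop:generators}, $PP^*(c(\Pi))$ is generated as a $PP^*(\widetilde{N}_{\Q})$-module by the homogeneous pieces $\varphi_\sigma$, which are themselves products of the piecewise linear functions $\varphi_\tau$ for rays $\tau \in c(\Pi)(1)$. Each such ray corresponds to a $\T_S$-invariant prime Cartier divisor $D_\tau \subseteq \X$, whose equivariant first Chern class lies in $\CH^1_{\T_S}(\X/S)$. Sending $\varphi_\tau$ to $c_1^{\T_S}(\O(D_\tau))$ and extending multiplicatively and $\widetilde{S}$-linearly determines $\Phi$; well-definedness amounts to matching the combinatorial linear relations $u|_{\sigma} = \sum_\tau \langle u, v_\tau\rangle \varphi_{\tau,\sigma}$ for $u \in \widetilde{M}$ with the geometric relations $\on{div}(\chi^u) = \sum_\tau \langle u, v_\tau\rangle D_\tau$, which is exactly the equivariant analogue of Remark~\ref{rem:rel-inv}.

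The central task is then to verify $\Psi \circ \Phi = \on{id}$ and $\Phi \circ \Psi = \on{id}$. The first identity is immediate on generators: in the affine chart $\X_\sigma$, the restriction of $c_1^{\T_S}(\O(D_\tau))$ to $O(\sigma)$ coincides with the linear form $\varphi_{\tau,\sigma}$ when $\tau \prec \sigma$ and vanishes otherwise, by the standard toric formula for the class of an invariant divisor on an orbit. The second identity I would reduce to the affine case via a Mayer--Vietoris argument using the open cover of $\X_{\Pi}$ by the affine toric subschemes $\{\X_\Lambda\}_{\Lambda\in\Pi}$ together with $\{X_\sigma\}_{\sigma\in\Sigma}$ described in Section~\ref{subsec:correspondence}. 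In the affine case the claim becomes $\CH^*_{\T_S}(\X_\sigma/S) \simeq \on{Sym}\widetilde{M}(\sigma)_{\Q}$, which follows from an equivariant retraction of $\X_\sigma$ onto its unique closed orbit $O(\sigma)$ through a one-parameter subgroup in the relative interior of $\sigma$.

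The main obstacle I anticipate is the mixed-characteristic bookkeeping: the fan $c(\Pi)$ contains both cones lying inside $N_{\R}\times\{0\}$, whose affine pieces $X_\sigma$ are purely generic-fiber toric varieties over $K$, and cones meeting $N_{\R}\times\R_{>0}$, whose affine pieces $\X_\Lambda$ are genuine toric schemes over $S$ with non-trivial special fiber. One must check that the equivariant contraction onto the closed orbit and the Mayer--Vietoris gluing behave uniformly across this horizontal/vertical divide. This is where the global toric structure of the approximations $\X\times^{\T_S}U$ enters: choosing $U$ with a compatible toric action (as explained after Proposition~\ref{prop:chow-hom}) ensures that both kinds of affine pieces sit in a common toric ambient scheme over $S$, so that the non-equivariant toric machinery of \cite{chow-toric} can be applied. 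Once this is dealt with, the graded algebra structure matches on both sides automatically, since $\Phi$ is a ring homomorphism by construction and $\Psi$ is a ring map because restriction commutes with cup products.
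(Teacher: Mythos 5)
Your route is genuinely different from the paper's. You argue on the cohomology side in the style of Theorem~\ref{th:chow-payne}: restriction to torus orbits for one direction, an inverse built from equivariant first Chern classes of the invariant divisors $D_\tau$, and a Mayer--Vietoris reduction to affine charts. The paper instead passes to the homology side via Poincar\'e duality (Theorem~\ref{th:dual2}) and runs Brion's equivariant-multiplicity argument: the map $u\mapsto \left(e_\sigma u/e_\sigma[\X]\right)_{\sigma}$, surjectivity onto $PP^*(c(\Pi))$ via Proposition~\ref{prop:generators}, and injectivity by induction on the cones of $c(\Pi)$ using equivariant localization sequences.

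Unfortunately there is a genuine gap, precisely at the step you flag as ``the main obstacle'' and then dismiss. Your reduction of $\Phi\circ\Psi=\on{id}$ to the affine case rests on ``an equivariant retraction of $\X_\sigma$ onto its unique closed orbit through a one-parameter subgroup in the relative interior of $\sigma$.'' For every cone $\sigma\in c(\Pi)$ not contained in $N_{\R}\times\{0\}$ --- i.e.\ for exactly the cones that make this a statement about toric schemes rather than toric varieties --- the relative interior of $\sigma$ lies in $N_{\R}\times\R_{>0}$ and hence contains no cocharacter of $\T_S$, whose cocharacter lattice is $N\times\{0\}$. More fundamentally, no such retraction can exist at all: it would be an $S$-morphism carrying the nonempty generic fiber of $\X_\sigma$ into its special fiber, which is impossible since $S$-morphisms preserve fibers. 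Even granting some substitute for the contraction, the conclusion would come out wrong: for a maximal vertical cone $\sigma$ the closed orbit is a $\T_{\kappa}$-fixed point, whose $\T_S$-equivariant Chow ring is a polynomial ring in $n$ variables, whereas $PP^*$ of the fan of faces of $\sigma$ is $\on{Sym}\widetilde{M}_{\Q}$ in $n+1$ variables; the extra generator (the piecewise linear function cutting out the vertical divisors) is invisible to the closed orbit. The same off-by-one undermines the construction of $\Psi$: the stabilizer of a vertical orbit in $\T_S$ has dimension one less than $\dim\sigma$, so restriction to $O(\sigma)$ does not by itself produce a polynomial function on the full span of $\sigma$, and the tuple of restrictions does not obviously land in $PP^*(c(\Pi))$. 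This horizontal/vertical asymmetry is exactly what the paper's homological argument is designed to absorb: there the vertical contributions enter as cycle classes $[V(\sigma)]$ and the functions $\varphi_\sigma$ are recovered from equivariant multiplicities rather than from orbit restriction. A secondary, more repairable issue: well-definedness of $\Phi$ requires verifying not only the linear relations $u=\sum_\tau\langle u,v_\tau\rangle\varphi_\tau$ but also the Stanley--Reisner monomial relations among the $\varphi_\tau$, i.e.\ that products of Chern classes of invariant divisors with empty common intersection vanish in the operational ring.
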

\begin{proof}
From the Poincaré duality in Theorem \ref{th:dual2} we know that capping with the fundamental class gives an isomorphism
\[
\CH^k_{\T_S}(\X) \longrightarrow \CH_{n+1-k}^{\T_S}(\X).
\]
Hence, $\CH_*^{\T_S}(\X)$ inherits a graded algebra structure. Note that the grading is such that $\CH^k_{\T_S}(\X)$ vanishes for $k < 0$. It thus suffices to give an $\widetilde{S}$-module isomorphism
\[
g \colon \CH_*^{\T_S}(\X) \longrightarrow PP^*(c(\Pi)).
\]

This  was shown in the case of simplicial toric varieties over a field by Brion in \cite[Theorem 5.4]{Brion-equi} and the proof extends to the case of toric schemes over $S$ using the regularity of the fan $c(\Pi)$. We sketch the ideas. Let $\sigma \in c(\Pi)$. Let $\widetilde{N}_{\sigma}$ be the subgroup of $\widetilde{N}$ generated by $\widetilde{N} \cap \sigma$ and denote by $\Omega_{\sigma}$ the field of rational functions on the rational vector space $(\widetilde{N}_{\sigma})_{\Q}$ with rational coefficients, i.e. the quotient field of $PP\left((\widetilde{N}_{\sigma})_{\Q}\right)$. One defines an $\widetilde{S}$-linear map 
\begin{eqnarray}\label{eq:equi-mul}
e_{\sigma}\colon \CH_*^{\T_S}(\X) \longrightarrow \Omega_{\sigma}.
\end{eqnarray}
It is defined by Brion in \cite[Section 4.2]{Brion-equi} in the case of toric varieties over a field using the notion of \emph{equivariant multiplicities}. One can extend this notion to toric schemes over $S$ and the map in \eqref{eq:equi-mul} is the one induced by this notion. It is the $\widetilde{S}$-linear map
determined by the following properties (recall that $\CH_*^{\T_S}(\X)$ is generated as an $\widetilde{S}$-module by classes of invariant cycles):  let $\tau \in c(\Pi)$. Then
\begin{enumerate}
\item if $\tau$ is not contained in $\sigma$, then $e_{\sigma}[V(\tau)] = 0$,
\item if $\tau$ is contained in $\sigma$, then for any $\lambda$ in the relative interior of $\sigma$, the value at $\lambda$ of $e_{\sigma}[V(\tau)]$ is $\left(\dim(\sigma)-\dim(\tau)\right)! \cdot \vol\left( \on{Im}\left(P_{\sigma}(\lambda) \cap \tau^{\perp}\right)\right)$,
where
\[
P_{\sigma}(\lambda) \coloneqq \left\{x \in \sigma^{\vee} \; | \; \langle \lambda, x \rangle \leq 1 \right\}
\]
 and we take the image in $\tau^{\perp}/\sigma^{\perp}$. 
 Here, $\langle \cdot, \cdot \rangle$ denotes the pairing between $\widetilde{N}_{\R}$ and $\widetilde{M}_{\R}$. In particular, $e_{\sigma}[V(\sigma)] = 1$. 
\end{enumerate}

Moreover, one can show that for any $\tau \in c(\Pi)$ the following is satisfied
\[
\left(e_{\sigma}[V(\tau)]/e_{\sigma}[\X]\right)_{\sigma \in c(\Pi)} = \varphi_{\tau},
\]
where $\varphi_{\tau}$ is the homogeneous piecewise polynomial function on $c(\Pi)$ of degree $\dim(\tau)$ from Definition \ref{def:generators}. 

It follows that the map
\[
g \colon \CH_*^{\T_S}(\X) \longrightarrow \prod_{\sigma \in c(\Pi)}\Omega_{\sigma}
\]
given by 
\[
u \longmapsto \left(e_{\sigma}u/e_{\sigma}[\X]\right)_{\sigma \in c(\Pi)}
\]
has image the $\widetilde{S}$-module generated by the $\varphi_{\sigma}$, $\sigma \in c(\Pi)$. But this is exactly $PP^*(c(\Pi))$ by Proposition \ref{prop:generators}.  It remains to show that $g$ is injective. For this, we follow the arguments in \cite[Proposition 5.3]{Brion-equi} and prove the injectivity by induction over the number of cones in $c(\Pi)$. Choose a cone $\sigma$ in $c(\Pi)$. We denote by $O(\sigma)$ the corresponding torus orbit. Consider the commutative diagram 
% \begin{figure}[H]
\begin{center}
    \begin{tikzpicture}
      \matrix[dmatrix] (m)
      {
       &  \CH_*^{\T_S}(O(\sigma))&  \CH_*^{\T_S}(\X) & \CH_*^{\T_S}(\X\setminus O(\sigma)) & 0\\
      0 & \Omega_{\sigma} & \prod_{\tau \in c(\Pi)}\Omega_{\tau} & \prod_{\tau \in c(\Pi)\setminus \{\sigma\}}\Omega_{\tau} & 0\\
      };
      \draw[->] (m-1-2) to (m-1-3);
      \draw[ ->] (m-1-3)to  (m-1-4);
      \draw[->] (m-1-4)to (m-1-5);
        \draw[->] (m-2-1) to (m-2-2);
      \draw[ ->] (m-2-2)to  (m-2-3);
      \draw[->] (m-2-3)to (m-2-4);
       \draw[->] (m-2-4)to (m-2-5);
        \draw[->] (m-1-3)to node[right]{$g$} (m-2-3);
          \draw[->] (m-1-4)to (m-2-4);
      
     \end{tikzpicture}
     \end{center}
   %  \end{figure}  
     which induces a map $\CH_*^{\T_S}(O(\sigma)) \to \Omega_{\sigma}$. This map is $\widetilde{S}$-linear and sends $\left[O(\sigma)\right] = \left[V(\sigma)\right]$ to $e_{\sigma}\left[V(\sigma)\right]=1$. Moreover, $\CH_*^{\T_S}(O(\sigma))$ is the symmetric algebra over $\widetilde{M}(\sigma)$, therefore the map $\CH_*^{\T_S}(O(\sigma)) \to \Omega_{\sigma}$ identifies with the inclusion of this symmetric algebra into its quotient field $\Omega_{\sigma}$. Now it follows by induction that $g$ is injective. 

Hence,  we obtain an isomorphism of $\widetilde{S}$-modules
\[
g \colon \CH_*^{\T_S}(\X) \longrightarrow PP^*(c(\Pi))
\]
as claimed.

\end{proof}
\begin{cor}
Let $\sigma \in c(\Pi)$ of dimension $k$. The Poincaré duality isomorphism
\[
\CH_{n+1-k}^{\T_S}(\X) \simeq \CH^k_{\T_S}(\X)
\]
identifies $\left[V(\sigma)\right]$ with the piecewise polynomial function $\varphi_{\sigma}$.
\end{cor}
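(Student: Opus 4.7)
The plan is to derive the corollary directly from the computation already carried out inside the proof of Theorem~\ref{th:equi-pol}. First, I would unpack the two isomorphisms being composed: the Poincaré duality map $\alpha$ of Theorem~\ref{th:dual2}, which sends $\CH^k_{\T_S}(\X/S) \to \CH^{\T_S}_{n+1-k}(\X/S)$ by capping with $[\X]$, and the graded algebra isomorphism $g\colon \CH_*^{\T_S}(\X/S) \to PP^*(c(\Pi))$ of Theorem~\ref{th:equi-pol}. Together these yield an isomorphism $\CH^k_{\T_S}(\X/S) \xrightarrow{\sim} PP^k(c(\Pi))$, and the statement of the corollary is equivalent to the identity
\[
g\bigl([V(\sigma)]\bigr) = \varphi_\sigma.
\]

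Second, I would invoke the key formula established during the proof of Theorem~\ref{th:equi-pol}, namely
\[
\left(e_\tau[V(\sigma)]/e_\tau[\X]\right)_{\tau \in c(\Pi)} = \varphi_\sigma,
\]
which is by construction of $g$ exactly the assertion $g([V(\sigma)]) = \varphi_\sigma$. Recall that this identity is verified using the two defining properties of the equivariant multiplicity recalled in Theorem~\ref{th:equi-pol}: one has $e_\tau[V(\sigma)] = 0$ unless $\sigma \prec \tau$, and for $\sigma \prec \tau$ the rational function $e_\tau[V(\sigma)]/e_\tau[\X]$ evaluates on the relative interior of $\tau$ to a combinatorial volume which matches the value of $\varphi_\sigma$ there, in agreement with the characterisation of $\varphi_\sigma$ from Definition~\ref{def:generators} as the piecewise polynomial function supported on the star of $\sigma$ given on each maximal cone through $\sigma$ by the product $\prod_{\tau\in\sigma(1)}\varphi_\tau$.

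Third, I would check that the grading is respected. Since $\dim(\sigma) = k$, the piecewise polynomial function $\varphi_\sigma = \prod_{\tau \in \sigma(1)} \varphi_\tau$ is homogeneous of degree $k$ in $PP^*(c(\Pi))$, while $V(\sigma)$ is an invariant cycle of codimension $k$ in $\X$. Thus $[V(\sigma)] \in \CH^{\T_S}_{n+1-k}(\X/S)$ corresponds under Poincaré duality to an element of $\CH^k_{\T_S}(\X/S)$, which under $g$ lands in the correct graded piece $PP^k(c(\Pi))$.

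The main (and essentially only) obstacle is the verification of the equivariant multiplicity identity in the second step. But since this is already worked out inside the proof of Theorem~\ref{th:equi-pol}, the corollary reduces to an immediate observation.
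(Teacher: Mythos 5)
Your proposal is correct and matches the paper's route exactly: the corollary is stated there without separate proof precisely because it is the identity $\left(e_{\tau}[V(\sigma)]/e_{\tau}[\X]\right)_{\tau \in c(\Pi)} = \varphi_{\sigma}$ already established inside the proof of Theorem~\ref{th:equi-pol}, read through the composition of $g$ with Poincaré duality. Your unpacking of the two isomorphisms and the degree check are exactly the intended argument.
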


\section{Equivariant Chow groups of the special fiber}\label{sec:special}
As before, we fix a complete, regular SCR polyhedral complex $\Pi$ in $N_{\R}$ and let $\X = \X_{\Pi}$ be the corresponding complete, regular toric scheme over $S$ of $S$-absolute dimension $n+1$. 
The goal of this section is to provide combinatorial descriptions of the equivariant Chow homology and of the equivariant operational Chow cohomology groups of the special fiber $\X_s$.  We view $\X_s$ as a $\T_{\kappa}$-scheme over $\kappa$.  It turns out that these can be computed in terms of equivariant Chow cohomology groups of smooth  toric varieties over $\kappa$ via an exact sequence.
\subsection{The exact sequences}
We give a combinatorial description of the exact sequences computing the equivariant Chow groups of the special fiber. These are the equivariant analogues of the exact sequences in \cite[Appendix A]{BGS}. 

For any natural number $\ell$ we write $\Pi(\ell)$ for the set of $\ell$-dimensional polyhedra in $\Pi$. We set $r = |\Pi(0)|$ and recall that this equals the number of irreducible components of $\X_s$. 

For any $i \in \{1, \dotsc, r\}$ let $\X_s^{[i]}$ be the disjoint union of $i$-fold intersections of irreducible components of $\X_s$. Set also $\X_s^{[0]}=\X$ and $\X_s^{[r]}= \emptyset$ if $r >n$. 

We will mostly only consider the cases $i = 0,1,2$. Using the correspondence between vertical cycles and polyhedra in $\Pi$ we can write
\[
\X_s^{[1]} = \bigsqcup_{v \in \Pi(0)}V(v) \quad \text{ and }\quad \X_s^{[2]} = \bigsqcup_{\stackrel{\gamma \in \Pi(1)}{\gamma \text{ bounded}}}V(\gamma).
\]
Recall that $V(\zeta)$ denotes the vertical cycle corresponding to $\zeta \in \Pi$, which is regular since $\Pi$ is regular.

Let $v, \gamma \in \Pi$ with $v \prec \gamma $ and $v$ of codimension one in $\gamma$. Consider the inclusion of the corresponding toric varieties
\[
u_{v, \gamma}\colon V(\gamma) \hookrightarrow V(v).
\]
We have group homomorphisms 
\[
u_{v,\gamma}^*\colon \CH_{\T_{\kappa}}^k\left(V(v)\right) \longrightarrow \CH_{\T_{\kappa}}^k\left(V(\gamma)\right)
\]
and (using Poincaré duality)
\[
{u_{v,\gamma}}_*\colon \CH_{\T_{\kappa}}^k(V\left(\gamma)\right) \longrightarrow \CH_{\T_{\kappa}}^{k+1}\left(V(v)\right).
\]

The morphisms $u_{v,\gamma}^*$ are ring homomorphisms and the following projection formula holds: 
\[
{u_{v,\gamma}}_*\left(u_{v,\gamma}^*(x) \cdot y \right) = x \cdot {u_{v,\gamma}}_*(y).
\]
In particular, 
\[
{u_{v,\gamma}}_*u_{v,\gamma}^*(x) = x \cdot {u_{v,\gamma}}_*(1), 
\]
and, using \cite[Proposition 2.6 c)]{fulint} we get 
\[
u_{v,\gamma}^*{u_{v,\gamma}}_*(x) 
=  x \cdot u_{v,\gamma}^*{u_{v,\gamma}}_*(1).
\]
For any $\zeta \in \Pi$, recall that $V(\zeta)$ is a toric variety over $\kappa$ with fan given by $\Pi(\zeta)$ (see Section \ref{subsec:orbits}).  We view $\Pi(\zeta)$ as a generalized fan in $N_{\R}$ with linear subspace $L(\zeta)$ and cones corresponding to the polyhedra in $\Pi$ containing $\zeta$.

Then it follows from Section \ref{sec:toric} that we can identify $\CH_{\T_{\kappa}}^k\left(V(\zeta)\right)$ with the ring of piecewise polynomial functions $PP^k(\Pi(\zeta))$.

With this identification we can describe the maps $u_{v,\gamma}^*$ and ${u_{v,\gamma}}_*$ combinatorially using Definitions \ref{def:pullback}, \ref{def:push} and Example \ref{exa:push-pull}.  
For example we have that
\[
{u_{v,\gamma}}_* \colon PP^k(\Pi(\gamma)) \longrightarrow PP^{k+1}(\Pi(v)) 
\]
sends 
\begin{eqnarray}\label{eq:proj}
1 \longmapsto \varphi_{v,\gamma},
\end{eqnarray}
where $\varphi_{v,\gamma}$ denotes the piecewise linear function on $\Pi(v)$ corresponding to $\gamma$ from Proposition \ref{prop:generators}. 

We now define morphisms 
\[
\rho \colon \CH_{\T_{\kappa}}^k\left(\X_s^{[1]}\right)\simeq \bigoplus_{v \in \Pi(0)}\CH_{\T_{\kappa}}^k\left(V(v)\right)  \longrightarrow \bigoplus_{\stackrel{\gamma \in \Pi(1)}{\gamma \text{ bdd}}}\CH_{\T_{\kappa}}^k\left(V(\gamma)\right) \simeq   \CH_{\T_{\kappa}}^k\left(\X_s^{[2]}\right)
\]

and 

\[
\gamma \colon \CH_{\T_{\kappa}}^k\left(\X_s^{[2]}\right)\simeq \bigoplus_{\stackrel{\gamma \in \Pi(1)}{\gamma \text{ bdd}}}\CH_{\T_{\kappa}}^k\left(V(\gamma)\right) \longrightarrow \bigoplus_{v \in \Pi(0)}\CH_{\T_{\kappa}}^{k+1}\left(V(v)\right)  \simeq   \CH_{\T_{\kappa}}^{k+1}\left(\X_s^{[1]}\right)
\]

in the following way. First we choose an ordering of the vertices $v_1 \geq v_2 \geq \dotsc \geq v_r$. For any bounded $\gamma \in \Pi(1)$ we denote by $v_1^{\gamma} \geq v_2^{\gamma}$ the endpoints of $\gamma$. Then for $f = \left(f_v\right)_{v \in \Pi(0)}$ we set
\[
\rho(f) = \left(\tilde{f}_{\gamma}\right)_{\stackrel{\gamma \in \Pi(1)}{\gamma \text{ bdd}}},
\]
where
\[
\tilde{f}_{\gamma} = u_{v_1^{\gamma},\gamma}^*f_{v_1^{\gamma}}- u_{v_2^{\gamma},\gamma}^*f_{v_2^{\gamma}}.
\]

On the other hand, for $\tilde{f} = \left(\tilde{f}_{\gamma}\right)_{\stackrel{\gamma \in \Pi(1)}{\gamma \text{ bdd}}}$, then $\gamma\left(\tilde{f}\right)$ is defined componentwise by 
\[
\gamma\left(\tilde{f}_{\gamma}\right) = {u_{v_1^{\gamma},\gamma}}_*\tilde{f}_{\gamma}-{u_{v_2^{\gamma},\gamma}}_*\tilde{f}_{\gamma}.
\]
\begin{exa} Denote by $\mu_i \colon \Pi(\gamma) \to \Pi\left(v_i^{\gamma}\right)$ the morphism of generalized fans corresponding to the inclusion $v_i^{\gamma} \prec \gamma$, for $i =1,2$. 
\begin{itemize}
\item Let $f = \left(f_v\right)_{v \in \Pi(0)} \in \CH_{\T_{\kappa}}^k\left(\X_s^{[1]}\right)$. Then for $\sigma \in \Pi(\gamma)$, the restriction of the piecewise polynomial function $(\rho(f))_{\gamma}$ to $\sigma$ is given by the polynomial function
\[
 \left(f_{v_1^{\gamma}}\right)_{\mu_1(\sigma)} -  \left(f_{v_2^{\gamma}}\right)_{\mu_2(\sigma)}.
\]
\item Let $\tilde{f} = \left(f_{\gamma}\right)_{\stackrel{\gamma \in \Pi(1)}{\gamma \text{ bdd}}}$ for $f_{\gamma}= \varphi_{\sigma_{\gamma}}$ with $\sigma_{\gamma} \in \Pi(\gamma)$ of dimension $\dim(L(\gamma))+k$. Then 
\[
\gamma\left(\tilde{f}_{\gamma}\right) = \gamma\left(\varphi_{\sigma_{\gamma}}\right) = \varphi_{\mu_1(\sigma_{\gamma})}-\varphi_{\mu_2(\sigma_{\gamma})}.
\]
\end{itemize}
\end{exa}

\begin{prop}\label{prop:exact-seq}
The equivariant Chow cohomology and homology groups of the special fiber $\X_s$ fit into the following exact sequences 
\[
\CH_k^{\T_{\kappa}}\left(\X_s^{[2]}\right) \xrightarrow{\gamma} \CH_k^{\T_{\kappa}}\left(\X_s^{[1]}\right) \to \CH_k^{\T_{\kappa}}\left(\X_s\right) \to 0
\]
and 
\[
0 \to \CH^k_{\T_{\kappa}}\left(\X_s\right) \to \CH^k_{\T_{\kappa}}\left(\X_s^{[1]}\right) \xrightarrow{\rho} \CH^k_{\T_{\kappa}}\left(\X_s^{[2]}\right).
\]
In particular, we get 
\begin{eqnarray}\label{eq:coho}
\CH_{\T_{\kappa}}^k(\X_s)\simeq \on{ker}(\rho) 
\end{eqnarray}
and 
\begin{eqnarray}\label{eq:hom}
\CH_k^{\T_{\kappa}}(\X_s) \simeq \on{coker}(\gamma).
\end{eqnarray}
\end{prop}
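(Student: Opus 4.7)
The plan is to adapt the non-equivariant Mayer--Vietoris arguments from \cite[Appendix~A]{BGS} to the equivariant setting, exploiting the simple normal crossings structure of $\X_s$ that comes from the regularity of $\Pi$. The irreducible components of $\X_s$ are the smooth toric varieties $V(v)$ for $v \in \Pi(0)$, and two distinct components $V(v), V(v')$ meet nontrivially precisely when $v,v'$ are the endpoints of a bounded edge $\gamma \in \Pi(1)$; in that case $V(v) \cap V(v') = V(\gamma)$. Because the proposition only concerns the terms $\X_s^{[1]}$ and $\X_s^{[2]}$, I do not need to invoke the higher Mayer--Vietoris contributions coming from triple and higher intersections, though the full simplicial spectral sequence is present in the background.

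For the homological sequence I would start from the standard right-exact sequence for Chow homology of a union of closed subschemes (\cite[Example~1.3.1]{fulint}) and iterate it over the cover $\{V(v)\}_{v \in \Pi(0)}$ of $\X_s$, using the chosen ordering of vertices to fix signs so that the alternating map matches the map $\gamma$ defined in the text. The equivariant version is obtained by running the same argument on the approximations $\X_s \times^{\T_\kappa} U$ described in Section~\ref{sec:eq-chow}; these are again toric schemes and all the relevant maps are equivariant proper pushforwards. The possible non-reducedness of $\X_s$ causes no trouble since we work with rational coefficients and Chow groups of a scheme coincide with those of its reduction.

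For the cohomological sequence, the argument is formally dual but relies on descent of bivariant classes along the proper surjection $p\colon \X_s^{[1]} \to \X_s$. A class $c \in \CH^k_{\T_\kappa}(\X_s/\kappa)$ pulls back to a family $(u_v^*c)_{v\in\Pi(0)}$, and compatibility on the double intersections $V(\gamma)$ lands this family inside $\on{ker}(\rho)$; this defines the second map. Injectivity of this map I would deduce from the first sequence: if every $u_v^*c$ vanishes, then for any $\T_\kappa$-equivariant morphism $\X'\to\X_s$ and any class $\alpha \in \CH_*^{\T_\kappa}(\X'/\kappa)$, the right-exactness applied to the induced cover $\{\X' \times_{\X_s} V(v)\}$ of $\X'$ expresses $\alpha$ as a sum of pushforwards from the $V(v)$-strata, so $c \cap \alpha = 0$ by the projection formula. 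For exactness at the middle, given a compatible family $(c_v) \in \on{ker}(\rho)$, I would construct a bivariant class $c$ on $\X_s$ by prescribing its action on cycles of arbitrary equivariant base changes of $\X_s$; reducing each cycle to its contributions supported on preimages of the components $V(v)$, the axioms $C_1$--$C_3$ (in their equivariant form) are checked component by component using the compatibility hypothesis on the $V(\gamma)$.

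The main obstacle is this last step --- gluing a compatible family of bivariant classes on the components into a single bivariant class on $\X_s$ --- since bivariant classes must respect all base changes, not merely restrictions to the components. The verification amounts to a descent statement for equivariant operational Chow cohomology along the finite cover $\X_s^{[1]} \to \X_s$, which in the non-equivariant case is essentially the content of the corresponding part of \cite[Appendix~A]{BGS}; the equivariant version goes through unchanged because each $V(\zeta)$ is itself a regular toric variety and the approximations respect the cover structure. The identifications \eqref{eq:coho} and \eqref{eq:hom} are then immediate consequences of the two exact sequences.
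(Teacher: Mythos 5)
Your proposal is correct and follows essentially the same route as the paper: the paper's proof consists of citing \cite[Appendix~A]{BGS} together with the equivariant localization sequences of \cite[Appendix~A]{Gonz}, and what you write out is precisely that argument --- Fulton's Mayer--Vietoris sequence for Chow homology of a union of closed subschemes for the first sequence, and the Kimura-type descent of operational classes along the envelope $\X_s^{[1]} \to \X_s$ for the second, transported to the equivariant setting via the finite-dimensional approximations. The one step you flag as the main obstacle (gluing a compatible family of bivariant classes) is exactly the step the paper also delegates to the cited references, so no gap is introduced relative to the paper's own proof.
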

\begin{proof}
This is the equivariant version of \cite[Appendix A]{BGS} which follows using the equivariant localization sequences in \cite[Appendix A]{Gonz} (see in particular Remark~A.4).
\end{proof}
In the following section, we will see that we can identify $\on{ker}(\rho) $ with the ring of affine piecewise polynomial functions on $\Pi$.

\subsection{Rings of affine piecewise polynomial functions}
Let $\Pi$ be a  complete, regular SCR polyhedral complex in $N_{\R}$.  We now define the ring of \emph{affine} piecewise polynomial functions on $\Pi$.
\begin{Def}\label{def:affine-pp} 
An \emph{affine piecewise polynomial function of degree $k$ on $\Pi$} consists of a tuple of piecewise polynomial functions $\left(f_v\right)_{v \in \Pi(0)}$ indexed over the vertices of $\Pi$ subject to the following conditions:
\begin{enumerate}
\item[(i)] $f_v \in PP^k(\Pi(v))$ for any vertex $v$ of $\Pi$, i.e.~$f_v$ is a piecewise polynomial function of degree $k$ with respect to the fan $\Pi(v)$.

For any (full-dimensional) polyhedron $\Lambda \in \Pi$ containing a vertex $v$ in $\Pi$, denote by $f_{v,\Lambda}$ the polynomial in $N_{\R}$ which represents the restriction of $f_v$ to $\Lambda$.
\item[(ii)] For any (full-dimensional) polyhedron $\Lambda \in \Pi$ and any two vertices $v$ and $v'$on $\Lambda$, we have that $f_{v,\Lambda}=f_{v',\Lambda}$ as polynomials in $N_{\R}$.
\end{enumerate}
The set of degree $k$ affine piecewise polynomial functions on $\Pi$ is denoted by $PP^k(\Pi)$.

One can add and multiply piecewise polynomial functions and we get a graded ring structure on $PP^*(\Pi) \coloneqq \bigoplus_{\geq 0}PP^k(\Pi)$.

\end{Def}
\begin{rem} Note that in the above definition the cone $\Lambda$ at $v$ and the cone of $\Lambda$ at $v'$ are different cones. What we require is that the piecewise polynomial function $f_v$ restricted to $\Lambda$ and the piecewise polynomial function $f_{v'}$ restricted to $\Lambda$ are represented by the same polynomial. 
\end{rem}

\begin{rem}\label{rem:piecewise-affine} An affine piecewise polynomial function on $\Pi$ of degree one induces a continuous piecewise affine function on $\Pi$ up to a constant. Indeed, it does not specify a value at any vertex $v$, however, it specifies the slope along any edge containing $v$.  Fixing a value at any vertex determines the values at the other vertices. This is illustrated in the following example.

\end{rem}

\begin{exa}
Let $\Pi$ as in Figure \ref{fig:pp} consisting of two vertices $v_1$ and $v_2$, $5$ polyhedra of dimension $1$ and $4$ of maximal dimension $2$. In the figure we see $\gamma$ and $\gamma'$ of dimension $2$. Its recession cone is the fan of $\mathbb{P}^1 \times \mathbb{P}^1$. 

\begin{figure}[H]
\begin{center}
\begin{tikzpicture}[scale=1]
    \draw (0,0)node[above]{$v_1$} to (-1,0);
    \draw (0,0) -- (0,-1) ;
      \draw (0,0) -- (1,1);
       \draw (1,1) node[above right]{$v_2$} to (2,1);
       \draw (1,1) -- (1,2);
       \draw (0,1.3) node{$\gamma'$};
       \draw (1,0) node{$\gamma$};
       
       \draw(3,0.5) to (4,0.5) node[above]{$v_1$};
         \draw(5,1.5) -- (4,0.5);
          \draw(4,-0.5) -- (4,0.5);
          \draw (4.4,0) node{$\gamma$};
           \draw (3.8,1.3) node{$\gamma'$};
          
            \draw(6.5,0.5)node[above right]{$v_2$} to (7.5,0.5) ;
         \draw(6.5,0.5) -- (5.5,-0.5);
          \draw(6.5,0.5) -- (6.5,1.5);
          \draw(6.7,-0.2) node{$\gamma$};
          \draw (6,0.6) node{$\gamma'$};
 \end{tikzpicture}
\end{center}
\caption{The polyhedral complexes $\Pi$, $\Pi(v_1)$ and $\Pi(v_2)$, respectively}\label{fig:pp}
 \end{figure}

An affine piecewise polynomial function of degree one on $\Pi$ is given by two piecewise polynomial functions of (homogeneous) degree one:
\[
f_{v_1} = \left(f_{v_1,\sigma}\right)_{\sigma \succ v_1} \quad \text{ and }\quad f_{v_2} = \left(f_{v_2,\sigma}\right)_{\sigma \succ v_2} 
\]
 on the fans $\Pi(v_1)$ and $\Pi(v_2)$, respectively The compatibility condition says that 
\[
f_{v_1, \gamma} = f_{v_2, \gamma}
\]
as linear functions on $N_{\R} \simeq \R^2$, i.e. of the form $\ell_{\gamma}(x,y) = ax + by$. And similarly for $\gamma'$, say of the form $\ell_{\gamma'}(x,y) = cx + dy$. The functions $\ell_{\gamma}$ and $\ell_{\gamma'}$, seen as functions on $\Pi$, don't have any specified values at the vertices $v_1$, $v_2$. However, if we fix a value at one vertex, say at $v_1$, then the constant part of  $\ell_{\gamma}$ and $\ell_{\gamma'}$ are determined, hence also the value at $v_2$. 
\end{exa}

The following is the main result of this section.

\begin{theorem}\label{th:cohomology-special}
The equivariant operational Chow ring of the special fiber $\CH_{\T_{\kappa}}^*(\X_s)$ can be identified with the graded ring of affine piecewise polynomial functions $PP^*(\Pi)$.
\end{theorem}
\begin{proof}
By \eqref{eq:coho} it suffices to identify $\on{ker}(\rho)$ with $PP^*(\Pi)$.
Let $f = \left(f_v\right)_{v \in \Pi(0)} \in \on{ker}(\rho)$. For any bounded $\gamma \in \Pi(1)$, as before, we let $v_1^{\gamma} \geq v_2^{\gamma}$ be the endpoints of $\gamma$ and we denote by $\mu_i \colon \Pi(\gamma) \to \Pi(v_i^{\gamma})$ the corresponding morphisms of fans. Then $f$ being in the Kernel of $\rho$ exactly means that for all such $\gamma$ and for all $\sigma \in \Pi(\gamma)$ we have 
\begin{eqnarray}\label{eq:kernel}
\left(f_{v_1^{\gamma}}\right)_{\mu_1(\sigma)} = \left(f_{v_2^{\gamma}}\right)_{\mu_2(\sigma)}.
\end{eqnarray}
This is equivalent to condition (ii) of Definition \ref{def:affine-pp}.

\end{proof}
\begin{rem}
In \cite[Theorem A.9]{Gonz} the author also provides a description of the equivariant Chow ring of the special fiber, which is an example of a \emph{T-skeletal variety}. Indeed, the author identifies the equivariant Chow ring of the special fiber with the space of piecewise polynomial functions given by tuples of polynomial functions $\left(f_{\sigma}\right)_{\sigma\in \Pi(n)}$ (one polynomial for each full-dimensional polyhedron in the complex $\Pi$) subject to the following compatibility condition:

%\item each vertex $v \in \Pi(0)$, the collection $\left\{f_{\sigma}\right\}_{\sigma}$ where $\sigma$ has $v$ as a vertex form a piecewise polynomial function on $\Pi(v)$.

For full-dimensional polyhedra $\sigma_1$ and $\sigma_2$ (not necessarily distinct) containing a facet $\tau$, one has that $f_{\sigma_1} \equiv f_{\sigma_2} \; \operatorname{mod}(\chi_{12})$, where $\chi_{12}$ is the character associated to the torus orbit $O(\tau)$. 

For the proof that this space agree with the equivariant Chow ring,  the author refers to \cite[Theorem 5.4]{Gonz-pol} where the statement is shown in the context of equivariant operational $K$-theory.

By Theorem \ref{th:cohomology-special} we have that $PP(\Pi)$ can be identified with the space of piecewise polynomial functions described above.  
\end{rem}
\begin{rem}\label{rem:bruhat}
Theorem \ref{th:cohomology-special}  has been applied to the study of equivariant Chern classes of toric vector bundles over a DVR in connection to Bruhat--Tits buildings in \cite{BKM}. The idea is that toric vector bundles over a DVR of rank $r$ are classified by piecewise affine maps to the Bruhat--Tits building associated to the general linear group of rank $r$ (See \cite{KMT} for this classification and for terminology regarding Bruhat--Tits buildings.) Then the Chern classes of such a toric vector bundle (defined as the Chern classes of the restriction to the special fiber) are described by the composition of this piecewise affine map and the elementary symmetric functions on the building.
\end{rem}

\section{Equivariant forms and currents}\label{sec:currents}
Let $X = X_{\Sigma}$ be a smooth projective toric variety over the discretely valued field $K$. 
The goal of this section is to define equivariant analogues of the spaces of forms and currents on $X$ appearing in \cite{BGS} and to provide combinatorial descriptions of these spaces.

Recall that $R(\Sigma)$ denotes the directed set of complete, regular SCR polyhedral complexes $\Pi$ in $N_{\R}$ such that $\on{rec}(\Pi) = \Sigma$. This agrees with the directed set of of regular proper toric models of $\X_{\Sigma}$.  For $\Pi \in R(\Sigma)$ we denote by $\X_{\Pi}$ the corresponding toric model. 
\subsection{The map $\iota^*\iota_*$}\label{sec:iota}
Let $\iota\colon \X_s \hookrightarrow \X$ denote the inclusion of the special fiber. The map $\iota^*\iota_*$ between equivariant Chow groups then corresponds to taking the pushforward of $\iota$, then Poincaré duality and then pullback of $\iota$. We give a combinatorial description of this map in terms of piecewise polynomial functions. 

This map will induce maps between spaces of forms and currents analogous to the $dd^c$ map in differential geometry. 

First, consider the map 
\[
\bigoplus_i u_i^* \colon \CH^k_{\T_S}(\X) \longrightarrow \bigoplus_{v_i \in \Pi(0)}\CH^k_{\T_S}(V(v_i)),
\]
where $u_i \colon V(v_i) \hookrightarrow \X$ denotes the inclusion. It is easy to see that the composition
\[
\CH_{\T_S}^k(\X) \xrightarrow{\bigoplus_i u_i^*}\CH_{\T_S}^k(\X_s^{[1]}) \xrightarrow{\rho} \CH_{\T_S}^k(\X_s^{[2]})
\]
is zero.  Similarly, the composition 
\[
\CH_{\T_S}^{k-1}(\X_s^{[2]}) \xrightarrow{\gamma}\CH_{\T_S}^k(\X_s^{[1]}) \xrightarrow{\bigoplus_i {u_i}_*} \CH_{\T_S}^{k+1}(\X)
\]
is zero.
 
 Hence, the maps $\rho$ and $\gamma$ induce morphisms 
 \[
 \iota^* \colon \CH_{\T_S}^k(\X) \longrightarrow \CH_{\T_S}^k(\X_s)
 \]
 and 
 \[
 \iota_* \colon \CH^{\T_S}_{n-k+1}(\X_s)\longrightarrow \CH_{\T_S}^k(\X).
 \]
 \begin{lemma}
The composite map 
 \[
 \CH_{\T_S}^{k-1}(\X_s^{[1]}) \to \CH^{\T_s}_{n-k+1}(\X_s) \xrightarrow{ \iota^*\iota_*}\CH_{\T_S}^k(\X_s) \to \CH_{\T_S}^k(\X_s^{[1]})
 \]
 coincides with $-\gamma \rho$. 
 \end{lemma}
 \begin{proof}
 First, it follows form the definitions that the morphism $\gamma \rho + \rho\gamma$ vanishes on $\CH^k_{\T_S}(\X_s^{[1]})$. On the other hand, the map 
 \[
 \rho \gamma\colon \CH^{k-1}_{\T_S}(\X_s^{[1]}) \to \CH^{k}_{\T_S}(\X_s^{[1]})
 \]
 is, by definition, the composite of morphisms 
 \[
 \CH_{\T_S}^{k-1}(\X_s^{[1]}) \to \CH^{\T_s}_{n-k+1}(\X_s) \xrightarrow{ \iota^*\iota_*}\CH_{\T_S}^k(\X_s) \to \CH_{\T_S}^k(\X_s^{[1]}).
 \]
 \end{proof}
 We can now give an expression of the map $-\gamma\rho$ in terms of piecewise polynomial functions.  
 \begin{lemma}\label{lem:compo}
 Let $f = \left(f_v\right)_{v \in \Pi(0)} \in \CH_{\T_S}^{k-1}\left(\X_s^{[1]}\right)$. For a fixed $v \in \Pi(0)$ we denote for any bounded $\gamma \in \Pi(1)$ containing $v$ the other endpoint of $\gamma$ by $v_{\gamma}$.  
 Then 
 \[
 -\gamma \rho(f) = \left(-\hat{f}_v\right)_{v \in \Pi(0)} \in \CH_{\T_S}^k(\X_0^{[1]}),
 \]
where $-\hat{f}_v$ is given by 
 \[
 -\hat{f}_v= \sum_{\stackrel{\gamma \in \Pi(1)}{\gamma \text{ bdd }, \;v \prec \Gamma}}\left({u_{v, \gamma}}_*u_{v_{\gamma},\gamma}^*f_{v_{\gamma}}-\varphi_{v,\gamma}f_v\right).
 \]
Here, the function $\varphi_{v,\gamma}$ is defined as in Equation \eqref{eq:proj}.
 \end{lemma}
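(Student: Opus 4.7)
The plan is a straightforward unraveling: apply the definitions of $\rho$ and $\gamma$ successively, regroup by vertex, and then collapse the remaining compositions $u_{v,\gamma\,*}u_{v,\gamma}^{*}$ via the projection formula. The only subtlety is keeping track of the two possible roles a vertex $v$ can play for an edge $\gamma$ that contains it, namely whether $v=v_{1}^{\gamma}$ or $v=v_{2}^{\gamma}$.

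First I would write $\rho(f)=(\tilde f_{\gamma})_{\gamma}$, where
\[
\tilde f_{\gamma}= u_{v_{1}^{\gamma},\gamma}^{*}f_{v_{1}^{\gamma}}-u_{v_{2}^{\gamma},\gamma}^{*}f_{v_{2}^{\gamma}},
\]
and then expand $\gamma\rho(f)$ by collecting, for each fixed $v\in\Pi(0)$, all bounded edges $\gamma\in\Pi(1)$ with $v\prec\gamma$. By the definition of the map $\gamma$, each such $\tilde f_{\gamma}$ contributes $u_{v_{1}^{\gamma},\gamma\,*}\tilde f_{\gamma}$ to the $v_{1}^{\gamma}$-component and $-u_{v_{2}^{\gamma},\gamma\,*}\tilde f_{\gamma}$ to the $v_{2}^{\gamma}$-component. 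Thus the $v$-component of $\gamma\rho(f)$ is
\[
[\gamma\rho(f)]_{v}\;=\;\sum_{\substack{\gamma\in\Pi(1),\ \gamma\text{ bdd}\\ v\prec\gamma}}\varepsilon_{v,\gamma}\,u_{v,\gamma\,*}\tilde f_{\gamma},
\]
with $\varepsilon_{v,\gamma}=+1$ if $v=v_{1}^{\gamma}$ and $\varepsilon_{v,\gamma}=-1$ if $v=v_{2}^{\gamma}$.

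The key observation is that inserting $\tilde f_{\gamma}$ and checking the two cases side by side shows the sign collapses: in both cases one obtains
\[
\varepsilon_{v,\gamma}\,u_{v,\gamma\,*}\tilde f_{\gamma}\;=\;u_{v,\gamma\,*}u_{v,\gamma}^{*}f_{v}\;-\;u_{v,\gamma\,*}u_{v_{\gamma},\gamma}^{*}f_{v_{\gamma}},
\]
where $v_{\gamma}$ denotes the endpoint of $\gamma$ different from $v$. Here the minus sign coming from $\rho$ combines with the sign $\varepsilon_{v,\gamma}$ coming from $\gamma$ to produce a single clean expression independent of the chosen ordering of the vertices.

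Finally, I would apply the projection formula recalled just above the definitions of $\rho,\gamma$, namely $u_{v,\gamma\,*}\bigl(u_{v,\gamma}^{*}f_{v}\bigr)=f_{v}\cdot u_{v,\gamma\,*}(1)$, together with the identity $u_{v,\gamma\,*}(1)=\varphi_{v,\gamma}$ from Equation \eqref{eq:proj}, to rewrite $u_{v,\gamma\,*}u_{v,\gamma}^{*}f_{v}=\varphi_{v,\gamma}f_{v}$. Multiplying the whole expression by $-1$ then yields exactly the claimed formula for $-\hat f_{v}$. The only genuine obstacle is the sign and index bookkeeping in the case analysis above, but it is purely combinatorial once the role of $\varepsilon_{v,\gamma}$ is spelled out; no further input beyond the projection formula and \eqref{eq:proj} is needed.
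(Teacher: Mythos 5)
Your proposal is correct and follows the same route as the paper's (very terse) proof: unwind the definitions of $\rho$ and $\gamma$, observe that the sign $\varepsilon_{v,\gamma}$ from the ordering cancels against the sign in $\tilde f_\gamma$ so each edge contributes $u_{v,\gamma\,*}u_{v,\gamma}^{*}f_v - u_{v,\gamma\,*}u_{v_\gamma,\gamma}^{*}f_{v_\gamma}$ to the $v$-component regardless of whether $v=v_1^\gamma$ or $v=v_2^\gamma$, and then apply the projection formula together with \eqref{eq:proj}. Your write-up simply makes explicit the sign bookkeeping that the paper leaves implicit.
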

 \begin{proof}
 This follows from the explicit description of $\rho$ and $\gamma$ using the projection formula
 \[
 {u_{v,\gamma}}_*u_{v,\gamma}^*f_v = f_v{u_{v,\gamma}}_*(1) = f_v\varphi_{v,\gamma}.
 \]
 \end{proof}
 \begin{exa}
 \begin{enumerate}
 \item Consider $\Pi = \Sigma$ the canonical model. We have 
 \[
 \CH^k_{\T_S}(\X_{\Pi}) \simeq \CH^k_{\T_S}(\X_{\eta})
 \]
and $\iota^*\iota_* = 0$ in this case. 

\item Suppose that $f = f_{\tilde{v}}$ for some $\tilde{v} \in \Pi(0)$. Then 
\[
-\gamma \rho(f) = \left(-\hat{f}_v\right)_{v \in \Pi(0)}
\]
with 
\[
-\hat{f}_v = \begin{cases} {u_{v,\gamma}}_*u_{\tilde{v},\gamma}^*f_{\tilde{v}} \; &\text{ if }\; \exists \gamma \in \Pi(1) \; \text{ with }\; v, \tilde{v}\prec \gamma, v \neq \tilde{v}, \\
\sum_{\gamma} -\varphi_{\tilde{v},\gamma}f_{\tilde{v}} \; & \text{ if }\; v = \tilde{v},
\end{cases}
\]
where the sum in the second line is over all bounded $\gamma \in \Pi(1)$ having $v$ as a face.
 \end{enumerate}
 \end{exa}
 \subsection{On $\on{ker}(\iota^*\iota_*)$ and $\on{coker}(\iota^*\iota_*)$}\label{subsec:ker}

Denote by 
\[
a \colon \CH_{\T_S}^k(\X_s) \longrightarrow \CH_{n-k}^{\T_S}(\X_s)  
\]
the composition of the morphisms
\[
\CH_{\T_S}^k(\X_s) \longrightarrow \CH_{\T_S}^k(\X_s^{[1]}) \longrightarrow \CH^{\T_S}_{n-k}(\X_s^{[1]})  \longrightarrow \CH^{\T_S}_{n-k}(\X_s).
\]
Then, as in \cite[Theorem 5]{bgs-deg}, we have that the following long sequence is exact.
\begin{eqnarray*}
\begin{tikzcd}
{}\arrow{r} &\CH^{\T_S}_{n+1-k}(\X_{s})\arrow["{\iota^*\iota_*}"]{r}&\CH_{\T_S}^{k}(\X_s)
\arrow["{a}"]{r}&\CH^{\T_S}_{n-k}(\X_s)
\arrow["{\iota^*\iota_*}"]{r}&\CH_{\T_S}^{k+1}(\X_s) 
\arrow{r} &{}
\end{tikzcd}
\end{eqnarray*}
One can think of the above exactness as saying that the failure of Poincaré duality on the special fiber is encoded in $\iota^*\iota_*$. 

We get the following proposition.
\begin{prop}\label{prop:ker=koker}
There is an isomorphism $\on{ker}(\iota^*\iota_*) \simeq \on{coker}(\iota^*\iota_*)$. Moreover, these groups are all isomorphic for any given toric model. 
\end{prop}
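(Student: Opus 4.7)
The plan has two stages: first to show $\on{ker}(\iota^*\iota_*) \simeq \on{coker}(\iota^*\iota_*)$ for a fixed toric model $\X_\Pi$; second to show that this common isomorphism class is independent of the choice of $\Pi \in R(\Sigma)$.

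The first stage is immediate from the long exact sequence displayed just above the statement. By exactness at $\CH^{\T_S}_{n-k}(\X_s/S)$,
\[
\on{ker}\bigl(\iota^*\iota_* \colon \CH^{\T_S}_{n-k}(\X_s/S) \to \CH_{\T_S}^{k+1}(\X_s/S)\bigr) = \on{Im}(a),
\]
where $a \colon \CH_{\T_S}^k(\X_s/S) \to \CH^{\T_S}_{n-k}(\X_s/S)$ is the intermediate map. By exactness one step back, at $\CH_{\T_S}^k(\X_s/S)$, one has $\on{ker}(a) = \on{Im}\bigl(\iota^*\iota_* \colon \CH^{\T_S}_{n+1-k}(\X_s/S) \to \CH_{\T_S}^k(\X_s/S)\bigr)$, so the first isomorphism theorem applied to $a$ yields
\[
\on{Im}(a) \simeq \CH_{\T_S}^k(\X_s/S)/\on{Im}(\iota^*\iota_*) = \on{coker}(\iota^*\iota_*).
\]
Concatenating these two identifications gives the isomorphism for the fixed model.

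For the second stage, I would anchor everything at the canonical model $\X_\Sigma$. Since $\on{rec}(\Pi) = \Sigma$ for every $\Pi \in R(\Sigma)$, there is an equivariant morphism of toric models $\pi \colon \X_\Pi \to \X_\Sigma$, which restricts to $\pi_s \colon \X_{\Pi,s} \to \X_{\Sigma,s}$. Functoriality of equivariant proper pushforward, pullback and Poincaré duality (see Remark~\ref{rem:op-eq}) produces a morphism from the long exact sequence for $\X_\Pi$ to the one for $\X_\Sigma$, and in particular natural maps between the associated kernels and cokernels. On $\X_\Sigma$ itself, item~(1) of the example after Lemma~\ref{lem:compo} gives $\iota^*\iota_* \equiv 0$, so $a$ becomes the usual Poincaré duality isomorphism on the smooth toric variety $X_{\Sigma,\kappa}$, and both groups coincide canonically with $\CH^k_{\T_K}(X_\Sigma/K)$ via Payne's Theorem~\ref{th:chow-payne}.

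The main obstacle I expect is verifying that the comparison maps described above are actually isomorphisms for an arbitrary $\Pi$. To address this, I plan to exploit the combinatorial descriptions in Theorem~\ref{th:equi-pol} and Theorem~\ref{th:cohomology-special}, which translate $\CH_{\T_S}^*(\X_s/S)$ and $\CH^{\T_S}_*(\X_s/S)$ into rings of (affine) piecewise polynomial functions on $\Pi$, together with the explicit formula $\iota^*\iota_* = -\gamma\rho$ from Lemma~\ref{lem:compo}. I expect $\on{Im}(a)$ to be precisely the piecewise polynomial data already encoded by the recession fan $\Sigma$, hence canonically $PP^k(\Sigma) \simeq \CH^k_{\T_K}(X_\Sigma/K)$ regardless of $\Pi$.
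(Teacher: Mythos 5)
Your first stage is exactly the paper's argument: the paper disposes of the isomorphism $\on{ker}(\iota^*\iota_*)\simeq\on{coker}(\iota^*\iota_*)$ on a fixed model by citing the long exact sequence, and your unwinding of it (the kernel of the degree-$(k{+}1)$ instance equals $\on{Im}(a)$, which by the first isomorphism theorem is the cokernel of the degree-$k$ instance) is correct and is precisely what is intended. Note only that this identifies the kernel in one degree with the cokernel in the adjacent degree; that is the natural reading of the statement and is consistent with how the paper uses it later.

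The second stage is where the substance of the ``Moreover'' clause lies, and there you have written a plan rather than a proof. To be fair, the paper's own justification is equally terse (``one can check that this follows from the explicit description of the map $-\gamma\rho$ in Lemma~\ref{lem:compo}''), so your strategy --- anchor at the canonical model and exploit the combinatorial formula $\iota^*\iota_*=-\gamma\rho$ --- is the intended route, and your expectation that everything collapses to $PP^k(\Sigma)\simeq\CH^k_{\T_K}(X_\Sigma/K)$ is exactly what Corollary~\ref{cor:ker} asserts. But the step you flag as ``the main obstacle'' is the entire content of the claim, and functoriality alone will not settle it: the compatibility diagrams in Section~\ref{sec:forms-currents} do give you a morphism of long exact sequences along $\pi\colon\X_\Pi\to\X_\Sigma$, hence comparison maps on kernels and cokernels, and birationality of $\pi$ makes one direction split; what is left is to show that every class in $\CH^{\T_S}_{n-k+1}(\X_{\Pi,s}/S)$ killed by $-\gamma\rho$ is represented by data determined on the recession fan. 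That requires actually working with the formula of Lemma~\ref{lem:compo}, i.e.\ showing that the condition $\rho(f)=0$ on a tuple $(f_v)_{v\in\Pi(0)}$, taken modulo $\on{Im}(\gamma)$, forces it to come from a single piecewise polynomial function on $\Sigma$. Until that computation is carried out, the model-independence (and with it Corollary~\ref{cor:ker}) is not established; as it stands your proposal proves the first assertion of the proposition and only outlines the second.
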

\begin{proof}
The first statement follows from the long exact sequence above. For the second one, one can check that this follows from the explicit description of the map $-\gamma \rho$ in Lemma \ref{lem:compo}.
\end{proof}
\begin{rem} The above proposition is the equivariant version of \cite[Theorem~2.2.1]{BGS} and \cite[Proposition~2.3.3]{BGS}. Note that the latter assumes the existence of a smooth model, which for toric schemes always exists (the canonical model).
\end{rem}
It follows from the above proposition that we can compute $\on{ker}(\iota^*\iota_*)$ on a single toric model, e.g. on the canonical model $\X = \X_{\Sigma}$. We obtain the following.
\begin{cor}\label{cor:ker}
For any $\Pi \in R(\Sigma)$ we have that
\[
\on{ker}\left(\iota^*\iota_* \colon \CH^{\T_S}_{n-k+1}(\X_{\Pi,s}) \longrightarrow \CH_{\T_S}^k(\X_{\Pi,s}) \right) 
\]
 is isomorphic to 
$CH^{k}_{\T_K}(X_{\Sigma})$.
\end{cor}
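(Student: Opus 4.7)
The plan is to reduce the computation to the canonical model $\X_{\Sigma}$ by invoking Proposition~\ref{prop:ker=koker}, which asserts that $\on{ker}(\iota^*\iota_*)$ is independent of the toric model chosen. Thus, instead of working with an arbitrary $\Pi \in R(\Sigma)$, it suffices to identify the kernel on the specific model $\X_{\Sigma}$, which has the advantage of being smooth and whose special fiber is a smooth toric variety over $\kappa$.

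The key observation is that on the canonical model the map $\iota^*\iota_*$ vanishes identically. Indeed, viewing $\Sigma$ as a complete SCR polyhedral complex with $\rec(\Sigma)=\Sigma$, every polyhedron of $\Sigma$ is a cone, so the only vertex is the origin and there are no bounded one-dimensional polyhedra in $\Sigma(1)$. By the explicit formula for $-\gamma\rho$ given in Lemma~\ref{lem:compo}, the sum defining $-\hat{f}_v$ ranges only over bounded edges at $v$, which is empty in this case. Therefore $\iota^*\iota_* = 0$ on $\X_{\Sigma}$, and consequently
\[
\on{ker}\bigl(\iota^*\iota_* \colon \CH^{\T_S}_{n-k+1}(\X_{\Sigma,s}/S) \longrightarrow \CH^k_{\T_S}(\X_{\Sigma,s}/S)\bigr) \;=\; \CH^{\T_S}_{n-k+1}(\X_{\Sigma,s}/S).
\]

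It then remains to identify the right-hand side with $\CH^k_{\T_K}(X_{\Sigma}/K)$. The special fiber of the canonical model satisfies $\X_{\Sigma,s} = X_{\Sigma,\kappa}$, i.e.~it is the smooth projective toric variety over $\kappa$ defined by the same fan $\Sigma$. Accounting for the $S$-absolute dimension convention (Remark~\ref{rem:dim}), the group $\CH^{\T_S}_{n-k+1}(\X_{\Sigma,s}/S)$ agrees with the equivariant Chow homology group $\CH^{\T_{\kappa}}_{n-k}(X_{\Sigma,\kappa}/\kappa)$ in the sense used for toric varieties over a field. Applying Poincaré duality for the smooth toric variety $X_{\Sigma,\kappa}$ and then the identification with piecewise polynomial functions from Theorem~\ref{th:chow-payne}, we obtain
\[
\CH^{\T_{\kappa}}_{n-k}(X_{\Sigma,\kappa}/\kappa) \;\simeq\; \CH^k_{\T_{\kappa}}(X_{\Sigma,\kappa}/\kappa) \;\simeq\; PP^k(\Sigma).
\]
On the other hand, the same theorem gives $\CH^k_{\T_K}(X_{\Sigma}/K) \simeq PP^k(\Sigma)$, since both identifications depend only on the combinatorial datum of the fan $\Sigma$. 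Composing these isomorphisms yields the claimed isomorphism.

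The main subtlety I anticipate is the bookkeeping of the absolute-versus-relative dimension convention when passing from $\CH^{\T_S}_{\ast}(\X_{\Sigma,s}/S)$ to $\CH^{\T_{\kappa}}_{\ast}(X_{\Sigma,\kappa}/\kappa)$, together with checking that the Poincaré duality map on the toric variety over $\kappa$ is compatible with the equivariant duality statement of Theorem~\ref{th:dual2} applied to the ambient $\T_S$-scheme. Once these identifications are in place the conclusion is formal.
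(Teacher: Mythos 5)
Your proposal is correct and follows essentially the same route as the paper: reduce to the canonical model via Proposition~\ref{prop:ker=koker}, observe that $\iota^*\iota_*$ vanishes there (the paper cites this as already shown, while you justify it via the absence of bounded edges in Lemma~\ref{lem:compo}, which is a valid elaboration), and then identify $\CH^{\T_S}_{n-k+1}(\X_{\Sigma,s}/S)$ with $\CH^k_{\T_K}(X_{\Sigma}/K)$ through the smooth special fiber $X_{\Sigma,\kappa}$ and the piecewise-polynomial description. The dimension bookkeeping and the passage through $PP^k(\Sigma)$ are exactly the implicit content of the paper's chain of isomorphisms.
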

\begin{proof}
Consider the canonical model $\X_{\Sigma}.$ In this case, as we have already seen, the $a$ in the above exact sequence is an isomorphism and the map $\iota^*\iota_*$ is the zero map. Hence
\[
\on{ker}\left(\iota^*\iota_*\right) = \CH_{n-k+1}^{\T_S}(\X_{\Sigma,s}) \simeq \CH_{n-k}^{\T_K}(X_{\Sigma}) \simeq \CH^{k}_{\T_K}(X_{\Sigma}).
\]
\end{proof}

 \subsection{Equivariant forms and currents as limits}\label{sec:forms-currents}
 We define the equivariant analogues of closed $(k,k)$-forms, $(k,k)$-forms modulo the image of $\partial$ and $\bar{\partial}$, closed $(k,k)$-currents and 
$(k,k)$-currents modulo the image of $\partial$ and $\bar{\partial}$ appering in \cite{BGS} in the case of toric varieties. These are defined using appropriate direct and inverse limits of equivariant Chow groups of the special fibers. We then relate these spaces to piecewise polynomial functions.

 Let $\Pi', \Pi \in R(\Sigma)$ be two complete, regular SCR polyhedral complexes corresponding to the toric models $\X' \coloneqq \X_{\Pi'}$ and $\X \coloneqq \X_{\Pi}$. Assume that $\Pi'\geq \Pi$ and let $\pi \colon \X' \to \X$ denote the corresponding (proper, birational) map of toric models. Recall from Example \ref{exa:orient} that we have an equivariant orientation class $[\pi_s] \in \CH_{\T_S}^0(\pi_s\colon \X_s' \to \X_s)$. 
 
 In general, if $ \mathfrak{Y}$ and $\mathfrak{Z}$ are $\T_S$-schemes and $f\colon \mathfrak{Y} \to \mathfrak{Z}$ is equivariantly orientable, with orientation $[f] \in \CH_{\T_S}^d(f \colon \mathfrak{Y} \to \mathfrak{Z})$, we write 
 \[
 f^!\colon \CH_k^{\T_S}(\mathfrak{Z}) \to \CH^{\T_S}_{k-d}(\mathfrak{Y})
 \]
 for the induced map on equivariant Chow homology, and if $f$ is proper, we write 
 
 \[
 f_!\colon \CH^k_{\T_S}(\mathfrak{Y}) \to \CH_{\T_S}^{k+d}(\mathfrak{Z})
 \]
 for the induced map on equivaraint cohomology.
 
% The special fiber $\iota \colon \X_s \hookrightarrow \X$ (resp.\,$\iota' \colon \X'_s \hookrightarrow \X'$) is a (principal) Cartier divisor so an orientation $[\iota]$ is defined. Hence, in the following commutative diagram all four maps are orientable. 
% \begin{center}
%    \begin{tikzpicture}
%      \matrix[dmatrix] (m)
%      {
%        \CH_k^{\T_S}(\X'_s)&   \CH_k^{\T_S}(\X')\\
%      \CH_k^{\T_S}(\X'_s)&  \CH_k^{\T_S}(\X)\\
%      };
%      \draw[->] (m-1-1) to node[above]{$\iota'$}  (m-1-2);
%      \draw[ ->] (m-1-1)to node[left]{$\pi_s$}  (m-2-1);
%      \draw[->] (m-1-2)to node[right]{$\pi$} (m-2-2);
%      \draw[->] (m-2-1) to node[below]{$\iota$} (m-2-2);
%      
%     \end{tikzpicture}
%     \end{center}
 
Thus, we have two covariant maps 
 \begin{eqnarray*}
 {\pi_s}_* \colon \CH_k^{\T_S}(\X_s') &\longrightarrow& \CH_k^{\T_S}(\X_s), \\
  {\pi_s}_! \colon \CH_{\T_S}^{n-k}(\X_s') &\longrightarrow & \CH_{\T_S}^{n-k}(\X_s),
 \end{eqnarray*}
 and two contravariant maps 
  \begin{eqnarray*}
 {\pi_s}^* \colon \CH^{n-k}_{\T_S}(\X_s) &\longrightarrow& \CH^{n-k}_{\T_S}(\X'_s), \\
  {\pi_s}^! \colon \CH^{\T_S}_{k}(\X_s) &\longrightarrow & \CH^{\T_S}_{k}(\X_s').
 \end{eqnarray*}
 
 Our next goal is to describe these maps in terms of piecewise polynomial functions. 
 \begin{itemize}
 \item (The map ${\pi_s}_*$) 
 
 By Equation \eqref{eq:hom} the map 
 \[
  {\pi_s}_* \colon \CH_k^{\T_S}(\X_s') \longrightarrow \CH_k^{\T_S}(\X_s)
  \]
  is determined by a map 
 
  \[
  \alpha \colon \CH_k^{\T_S}(\X_s'^{[1]}) \longrightarrow \CH_k^{\T_S}(\X_s^{[1]})
  \]
  such that the following diagram commutes 
 % \begin{figure}[H]
\begin{center}
    \begin{tikzpicture}
      \matrix[dmatrix] (m)
      {
        \CH_k^{\T_S}(\X_s'^{[1]})&  \CH_k^{\T_S}(\X_s^{[1]})\\
       \CH_k^{\T_S}(\X_s') & \CH_k^{\T_S}(\X_s)\\
      };
      \draw[->] (m-1-1) to node[above]{$\alpha$}  (m-1-2);
      \draw[ ->] (m-1-1)to  (m-2-1);
      \draw[->] (m-1-2)to (m-2-2);
      \draw[->] (m-2-1) to node[below]{$ {\pi_s}_*$} (m-2-2);
      
     \end{tikzpicture}
     \end{center}
  %   \end{figure}  
  The map $\alpha$ is given in the following way: Write $\alpha = \left(\alpha_{v'}\right)_{v' \in \Pi'(0)}$.  Then each $\alpha_{v'}$ is given by 
  \[
  \alpha_{v'} = \begin{cases} h_{v'} &\; \text{ if } v' \notin \Pi(0) \\
  {\pi_{v'}}_*  &\; \text{ if } v' \in \Pi(0),
  \end{cases}
  \]
  where ${\pi_{v'}}_* \colon \CH_k^{\T_S}(X_{\Pi'(v')}) \to \CH_k^{\T_S}(X_{\Pi(v')})$ denotes the toric pushforward map induced by the refinement $\Pi'(v') \to \Pi(v')$ in case $v' \in \Pi(0)$. In the case that  $v' \notin \Pi(0)$, $h_{v'}$ is defined as follows. Let $\sigma \in \Pi$ a polyhedron containing $v'$ in its interior and let the $v_i$'s be the vertices of $\sigma$. To each such $v_i$ we have a pushforward map $\pi_{v_i,*} \colon \CH_k^{\T_S}(V(\sigma)) \to \CH_k^{\T_S}(V(v_i))$. Then $h_{v'} = \sum_i \pi_{v_i,*}$.
  
 It is easy to check that if $f \in \CH_k^{\T_S}(\X_s'^{[1]})$ is in the image of $\gamma'$ then $\alpha(f) \in \CH_k^{\T_S}(\X_s^{[1]})$ is in the image of $\gamma$. Hence the above diagram commutes and $\alpha$ induces a well-defined map between the equivariant Chow groups.
  
\item (The map ${\pi_s}_!$) 
By Theorem \ref{th:cohomology-special} the map 
\[
 {\pi_s}_! \colon \CH_{\T_S}^{n-k}(\X_s') \longrightarrow  \CH_{\T_S}^{n-k}(\X_s)
 \]
 can be thought of as a pushforward map of affine piecewise polynomial functions. It maps an affine piecewise polynomial function on $\Pi'$ to an affine piecewise polynomial function on $\Pi$. Explicitly, it is given by the dual map
 \[
 \beta = \alpha^* \colon \CH_{\T_S}^{n-k}(\X_s'^{[1]}) \longrightarrow  \CH_{\T_S}^{n-k}(\X_s^{[1]}), \quad \left(f_{v'}\right))_{v' \in \Pi'(0)} \longmapsto \left(f_v\right)_{v \in \Pi(0)},
 \]
 and it makes the following diagram commute
 % \begin{figure}[H]
\begin{center}
    \begin{tikzpicture}
      \matrix[dmatrix] (m)
      {
        \CH^{n-k}_{\T_S}(\X_s')&  \CH^{n-k}_{\T_S}(\X_s^)\\
       \CH^{n-k}_{\T_S}(\X_s'^{[1]}) & \CH^{n-k}_{\T_S}(\X_s^{[1]})\\
      };
      \draw[->] (m-1-1) to node[above]{${\pi_s}_!$}  (m-1-2);
      \draw[ ->] (m-1-1)to  (m-2-1);
      \draw[->] (m-1-2)to (m-2-2);
      \draw[->] (m-2-1) to node[below]{$\beta = \alpha^*$} (m-2-2);
      
     \end{tikzpicture}
     \end{center}
   %  \end{figure}  
Here one checks that if $f \in  \CH^{n-k}_{\T_S}(\X_s'^{[1]})$ is in the Kernel of $\rho'$, then $\beta(f) \in \CH^{n-k}_{\T_S}(\X_s^{[1]})$ is in the Kernel of $\rho$. 
 
 \item (The map ${\pi_s}^*$) The map 
 \[
{\pi_s}^* \colon \CH^{n-k}_{\T_S}(\X_s) \longrightarrow \CH^{n-k}_{\T_S}(\X'_s)
  \]
 can be thought of as a pullback map of affine piecewise polynomial functions. It has the following explicit description. If $f = (f_v)_{v \in \Pi(0)} \in \CH^{n-k}_{\T_S}(\X_s)$ is an affine piecewise polynomial function on $\Pi$,   then $ {\pi_s}^*(f) \in  \CH^{n-k}_{\T_S}(\X'_s)$ is the affine piecewise polynomial function $(f_{v'})$ on $\Pi'$ induced by $f$, i.e. 
 \[
 f_{v'} = \begin{cases} f_{v} &\text{ if } v = v' \in \Pi(0),\\
 f_{v,\Lambda} &\text{ otherwise }, \end{cases}
 \]
where $\Lambda$ is any (full-dimensional) polyhedron in $\Pi$ containing $v' $ in its interior and $v$ is any vertex of $\Lambda$.  Recall that $f_{v,\Lambda}$ denotes the polynomial function on $N_{\R}$ representing the restriction of $f_v$ to $\Lambda$.

This is well defined by the characterization of $\CH^{n-k}_{\T_S}(\X_s)$ in terms of $PP^{n-k}(\Pi)$.
%$ \circ \mu$ for $\mu \colon |\Pi'| \to |\Pi|$.  Note that here $\mu$ is simply the identity map.
  
  \item (The map $ {\pi_s}^!$) By Equation \eqref{eq:hom} the map 
  \[
   {\pi_s}^! \colon \CH^{\T_S}_{k}(\X_s) \longrightarrow  \CH^{\T_S}_{k}(\X_s')
   \]
   is determined by a map 
   \[
   \zeta \colon \CH^{\T_S}_{k}(\X_s^{[1]}) \longrightarrow  \CH^{\T_S}_{k}(\X_s'^{[1]})
  \]
 such that the following diagram commutes
% \begin{figure}[H]
\begin{center}
    \begin{tikzpicture}
      \matrix[dmatrix] (m)
      {
        \CH_{k}^{\T_S}(\X_s^{[1]})&  \CH_{k}^{\T_S}(\X_s'^{[1]})\\
       \CH_{k}^{\T_S}(\X_s) & \CH_{k}^{\T_S}(\X_s')\\
      };
      \draw[->] (m-1-1) to node[above]{$\zeta$}  (m-1-2);
      \draw[ ->] (m-1-1)to  (m-2-1);
      \draw[->] (m-1-2)to (m-2-2);
      \draw[->] (m-2-1) to node[below]{${\pi_s}^!$} (m-2-2);
      
     \end{tikzpicture}
     \end{center}
  %   \end{figure}  
     
    The map $\zeta$ is given as follows. Write $\zeta = (\zeta_v)_{v \in \Pi(0)}$. Then each $\zeta_{v}$ is given by $\zeta_v = \pi_v^*$, where $\pi_v^* \colon \CH_k^{\T_S}(X_{\Pi(v)}) \to    \CH_k^{\T_S}(X_{\Pi'(v)})$ is the pullback corresponding to the fan refinement $\Pi'(v) \to \Pi(v)$. 
    
%     Let $f = \left(f_v\right)_{v \in \Pi(0)} \in  \CH_{k}^{\T_S}(\X_s^{[1]})$. Then $\zeta (f) = \left(\tilde{f}_{v'}\right)_{v' \in \Pi'(0)}$ with 
%    \[
%    \tilde{f}_{v'} = \begin{cases} f_{v'} \; &\text{ if }v' \in \Pi(0) \\
%    h_{v'}\; &\text{ if }v' \notin \Pi(0) \end{cases}
%    \]
%    where $h_{v'}$ is defined as follows: let $\sigma \in \Pi$ be a cone containing $v'$ in its interior and let $v_1, \dotsc, v_r \in \sigma(0)$. Then $h_{v'} = \sum_{i=1}^rf_{v_i}|_{\sigma}$. 
    
    Here one checks that if $f \in \on{Im}(\gamma)$ then $\zeta(f) \in \on{Im}(\gamma')$.  
 \end{itemize}
 The following definition is the equivariant version of \cite[Def. 1.4.1--1.4.4]{BGS}.
 \begin{Def}\label{def:forms&currents} 
 \begin{enumerate}
 \item The \emph{group of closed equivariant $(k,k)$-forms on $X$} is the direct limit
 \[
 A^{k,k}_{\closed,\T_K}(X) \coloneqq \varinjlim_{\Pi \in R(\Sigma)}\CH_{\T_S}^k\left(\X_{\Pi,s}\right),
 \]
 with maps given by $\pi_s^*$. 
 
 \item The \emph{group of equivariant $(k,k)$-forms modulo the image of $\partial$ and $\bar{\partial}$} is the direct limit
 \[
 \widetilde{A}_{\T_K}^{k,k}(X) \coloneqq \varinjlim_{\Pi \in R(\Sigma)}\CH^{\T_S}_{n-k}\left(\X_{\Pi,s}\right),
 \]
 with maps given by $\pi_s^{!}$.
 \item The \emph{group of closed equivariant $(p,p)$-currents is} the projective limit
 \[
 D_{\closed,\T_K}^{k,k}(X) \coloneqq \varprojlim_{\Pi \in R(\Sigma)}\CH_{\T_S}^k\left(\X_{\Pi,s}\right),
 \]
 with maps given by the ${\pi_s}_!$. 
 \item The \emph{group of equivariant $(k,k)$-currents modulo the image of $\partial$ and $\bar{\partial}$} is the projective limit
 \[
 \widetilde{D}_{\T_K}^{k,k}(X) \coloneqq \varprojlim_{\Pi \in R(\Sigma)}\CH^{\T_S}_{n-k}\left(\X_{\Pi,s}\right),
 \]
 with maps given by the ${\pi_s}_*$. 
 \end{enumerate}
 \end{Def}
The following proposition gives a combinatorial description of these spaces.  
 \begin{prop}\label{prop:com-forms-currents}
 \begin{enumerate}
\item An element in  $A^{k,k}_{\closed,\T_K}(X)$ is determined by an element $f \in \CH_{\T_S}^k\left(\X_{\Pi,s}\right)$ for some model $\Pi \in R(\Sigma)$, i.e. by an affine piecewise  polynomial function on $\Pi$. All the other elements are obtained from $f$ by the pullback ${\pi_s}^*$.  In other words, we get 
\begin{align*}
A^{k,k}_{\closed,\T_K}(X) \simeq \big{\{} f \colon N_{\R} \to \R \; | \; f &\text{ is affine piecewise polynomial of degree $k$}\\ &\text{ with respect to some }\Pi \in R(\Sigma)\big{\}}.
\end{align*}
Hence,
\begin{align*}
A^{*,*}_{\closed,\T_K}(X)\coloneqq \bigoplus_{k \in \Z} A^{k,k}_{\closed,\T_K}(X) \simeq \big{\{} f \colon N_{\R} \to \R \; | \; f &\text{ is affine piecewise polynomial}\\ &\text{with respect to some }\Pi \in R(\Sigma)\big{\}}.
\end{align*}
 
 \item An element in $\widetilde{A}_{\T_K}^{k,k}(X)$ is determined by an element in $\CH_{n-k}^{\T_S}\left(\X_{\Pi,s}^{[1]}\right)$ for some model $\Pi \in R(\Sigma)$ and all the other elements are obtained by pullback under $\zeta$. 
 \item An element in $D_{\closed,\T_K}^{k,k}(X)$ is a tuple of affine piecewise polynomial functions $\left(f_{\Pi'}\right)_{\Pi' \in R(\Sigma)}$, where $f_{\Pi'}$ is piecewise polynomial with respect to $\Pi'$ of degree $k$, compatible under the pushforward map ${\pi_s}_!$, i.e. such that 
 \[
 {\pi_s}_!(f_{\Pi'})= f_{\Pi}
 \]
 for any $\Pi' \geq \Pi$ in $R(\Sigma)$. 
For $k=1$ such a tuple determines a function 
 \[
 f \colon N_{\Q} \to \Q,
\]
up to a constant (see Remark \ref{rem:piecewise-affine}).

 \item An element in $\widetilde{D}_{\T_K}^{k,k}(X)$ is a tuple $\left(g_{\Pi}\right)_{\Pi \in R(\Sigma)}$ with $g_{\Pi} \in \CH_{n-k}^{\T_S}\left(\X_{\Pi,s}^{[1]}\right)$, compatible under the pushforward map $\alpha$ given above, i.e. such that 
 \[
 \alpha(g_{\Pi'}) = g_{\Pi}
 \]
for any $\Pi' \geq \Pi$ in $R(\Sigma)$.  Note that here on new vertices $v\in \Pi'(0) \setminus \Pi(0)$ one allows any affine piecewise polynomial function on $PP^*(\Pi'(v))$ modulo $\on{Im}(\gamma)$.  
 \end{enumerate}
 \end{prop}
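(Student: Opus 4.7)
The plan is to combine Theorem~\ref{th:cohomology-special}, which identifies $\CH^k_{\T_\kappa}(\X_{\Pi,s}/\kappa)$ with the ring $PP^k(\Pi)$ of affine piecewise polynomial functions, with the cokernel description $\CH^{\T_\kappa}_{n-k}(\X_{\Pi,s}/\kappa)\simeq\on{coker}(\gamma)$ from Equation~\eqref{eq:hom}, and then feed in the explicit combinatorial formulas for the transition maps $\pi_s^*,\pi_s^!,{\pi_s}_!,{\pi_s}_*$ recorded at the beginning of Section~\ref{sec:forms-currents}. Once those identifications are in place, each of the four parts becomes an unwinding of what a (co)limit of piecewise polynomial data looks like.

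For part~(1), the key observation is that under the identification $\CH^k_{\T_S}(\X_{\Pi,s}/S)\simeq PP^k(\Pi)$, the pullback $\pi_s^*$ is the identity on the underlying function on $N_\R$: passing from $\Pi$ to a refinement $\Pi'$ amounts to reinterpreting the same continuous function as affine piecewise polynomial with respect to the finer complex, as is visible from the explicit formula $f_{v'}=f_{v,\Lambda}$ in Section~\ref{sec:forms-currents}. Hence the direct limit consists precisely of those functions on $N_\R$ that are affine piecewise polynomial of degree $k$ with respect to some $\Pi\in R(\Sigma)$, which is the claimed description; the graded ring structure is inherited from pointwise multiplication, which commutes with refinement.

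For part~(2), I would identify $\CH^{\T_S}_{n-k}(\X_{\Pi,s}/S)\simeq\on{coker}(\gamma_\Pi)$ and use the transition map $\zeta$ described in Section~\ref{sec:forms-currents}; an element of the direct limit is then, by definition, represented by a class in $\CH^{\T_S}_{n-k}(\X^{[1]}_{\Pi,s}/S)$ for some $\Pi$, with all other representatives obtained via $\zeta$. For parts~(3) and~(4) the projective-limit descriptions are the formal duals: a closed current is a tuple $(f_{\Pi'})$ of affine piecewise polynomial functions compatible under ${\pi_s}_!$, while an element of $\widetilde{D}^{k,k}_{\T_K}(X)$ is a tuple $(g_{\Pi'})$ of classes in $\on{coker}(\gamma_{\Pi'})$ compatible under $\alpha$. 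The freedom on new vertices $v'\in\Pi'(0)\setminus\Pi(0)$ in~(4) reflects the formula $\alpha_{v'}=0$ for such $v'$ in Section~\ref{sec:forms-currents}: these components impose no constraint on $g_\Pi$ and can be chosen arbitrarily in the relevant cokernel.

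The main obstacle will be the parenthetical claim in~(3) that for $k=1$ a compatible tuple $(f_{\Pi'})$ determines a function $f\colon N_\Q\to\Q$. By Remark~\ref{rem:piecewise-affine}, each $f_{\Pi'}$ is itself a continuous piecewise affine function on $N_\R$, but the tuples are compatible under pushforward rather than pullback, so the $f_{\Pi'}$ need not agree as functions on $N_\R$. To extract the rational function, for $x\in N_\Q$ I would choose some $\Pi\in R(\Sigma)$ having $x\in\Pi(0)$ (always possible by a suitable subdivision through $x$) and set $f(x)\coloneqq f_\Pi(x)$. Well-definedness reduces to checking that ${\pi_s}_!$ preserves values at common vertices, which can be verified by direct inspection of the formula for $\beta=\alpha^*$ in Section~\ref{sec:forms-currents}: for $v\in\Pi(0)\subseteq\Pi'(0)$ the relevant pushforward is the toric cohomology pushforward along the birational refinement $X_{\Pi'(v)}\to X_{\Pi(v)}$, whose effect on piecewise linear functions preserves the value at the common origin (i.e.\ at $v$). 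Passing through a common refinement then handles the general case.
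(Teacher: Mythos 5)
Your proposal is correct and takes essentially the same route as the paper, whose entire proof of this proposition is the one-line remark that it follows directly from the definitions and the combinatorial descriptions of the equivariant Chow groups in Section~\ref{sec:eq-chow}; you are simply making explicit the identifications $\CH^k_{\T_S}(\X_{\Pi,s}/S)\simeq PP^k(\Pi)$ and $\CH^{\T_S}_{n-k}(\X_{\Pi,s}/S)\simeq \on{coker}(\gamma)$ together with the combinatorial formulas for $\pi_s^*$, $\zeta$, ${\pi_s}_!$ and $\alpha$. Your additional argument for the $k=1$ parenthetical in part~(3) --- evaluating at vertices on a model where the given rational point is a vertex and checking compatibility under ${\pi_s}_!$ --- supplies more detail than the paper, which at that point only refers to Remark~\ref{rem:piecewise-affine}.
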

 \begin{proof}
 This follows directly from the definitions and the combinatorial descriptions of the equivariant Chow groups given in Section \ref{sec:eq-chow}.
 \end{proof}

 \begin{exa}\label{ex:b-div} Consider $k =1$ and pick any $\Pi \in R(\Sigma)$. We have seen that a form in $A^{1,1}_{\closed,\T_K}(X)$ defines, up to a constant, a piecewise affine function on $\Pi$ and hence a piecewise linear function on the fan $c(\Pi)$. On the other hand, toric Cartier b-divisor on the fan $c(\Pi)$ can be identified with piecewise linear functions on the support $|c(\Pi)|$ and two of them represent linear equivalent divisors if and only if the difference is a global linear function. In particular, we see that a form  in $A^{1,1}_{\closed,\T_K}(X)$ determines a class of Cartier b-divisors on the fan $c(\Pi)$. On the other hand, a current in $D_{\closed,\T_K}^{1,1}(X)$ should define a ``class'' of a toric Weil b-divisor on $c(\Pi)$ (we refer to \cite{botero} for the notion of toric b-divisors). 
 \end{exa}
 \begin{rem}
 For arbitrary $k$, it would be interesting to relate forms in $A^{k,k}_{\closed,\T_K}(X)$ to elements in McMullen's Polytope Algebra \cite{FS} and currents in $D_{\closed,\T_K}^{k,k}(X)$ to elements in the Convex Set Algebra \cite{botero-conv}.  
 \end{rem}
 Clearly we have unique injective maps 
 \[
 \iota_{\closed} \colon A^{k,k}_{\closed,\T_K}(X)  \longrightarrow D^{k,k}_{\closed,\T_K}(X)
 \]
 such that for every map of toric models $\pi \colon \X_{\Pi''} \to \X_{\Pi'}$, the composition 
 \begin{eqnarray*}
\begin{tikzcd}
\CH_{\T_S}^k\left(\X_{\Pi',s}\right)  \arrow{r} & A^{k,k}_{\closed,\T_K}(X) \arrow["{\iota_{\closed}}"]{r}&D_{\closed,\T_K}^{k,k}(X)
\arrow{r}&\CH_{\T_S}^k\left(\X_{\Pi'',s}\right)
\end{tikzcd}
\end{eqnarray*}
is equal to ${\pi_s}^*$.  
 
 Similarly we have unique injective maps 
 \[
 \widetilde{\iota} \colon \widetilde{A}_{\T_S}^{k,k}(X) \longrightarrow \tilde{D}_{\T_S}^{k,k}(X)
 \]
 such that for $\pi \colon \X_{\Pi''} \to \X_{\Pi'}$ as above, the composition 
 \begin{eqnarray*}
\begin{tikzcd}
\CH^{\T_S}_{n-k}\left(\X_{\Pi',s}\right)  \arrow{r} & \widetilde{A}^{k,k}_{\T_K}(X) \arrow["{\widetilde{\iota}}"]{r}&\widetilde{D}_{\T_K}^{k,k}(X)
\arrow{r}&\CH^{\T_S}_{n-k}\left(\X_{\Pi'',s}\right)
\end{tikzcd}
\end{eqnarray*}
is equal to ${\pi_s}^!$.

Now, let $\X = \X_{\Pi}$ be a toric scheme. Recall the map 
 \[
 \iota_*\iota^* \colon \CH_{n-k}^{\T_S}(\X_s) \longrightarrow \CH_{\T_S}^{k+1}(\X_s)
 \]
 from Section \ref{sec:iota}, which is given combinatorially by $-\gamma \rho$. The next proposition says that this map is compatible with the covariant maps ${\pi_s}_*$, ${\pi_s}_!$ and with the contravariant maps ${\pi_s}^*$, ${\pi_s}^!$. 
 \begin{prop}
 Let $\Pi' \geq \Pi \in R(\Sigma)$ with corresponding map of toric models $\pi \colon \X' \to \X$. We have commutative diagrams 
 
% \begin{figure}[H]
\begin{center}
    \begin{tikzpicture}
      \matrix[dmatrix] (m)
      {
      \CH_{\T_S}^{n-k}\left(\X_s'^{[1]}\right) & \CH_k^{\T_S}\left(\X_s'\right) & \CH_{\T_S}^{n+1-k}\left(\X_s'\right) & \CH_{\T_S}^{n+1-k}\left(\X_s'^{[1]}\right) \\
       \CH_{\T_S}^{n-k}\left(\X_s^{[1]}\right) & \CH_k^{\T_S}\left(\X_s\right) & \CH_{\T_S}^{n+1-k}\left(\X_s\right) & \CH_{\T_S}^{n+1-k}\left(\X_s^{[1]}\right)\\
      };
      \draw[->] (m-1-1) to (m-1-2);
      \draw[->] (m-1-2) to node[above]{$\iota^*\iota_*$}  (m-1-3);
        \draw[->] (m-1-3) to (m-1-4);
          \draw[->] (m-2-1) to (m-2-2);
      \draw[->] (m-2-2) to node[above]{$\iota^*\iota_*$}  (m-2-3);
        \draw[->] (m-2-3) to (m-2-4);
        
      \draw[ ->] (m-1-1)to node[left]{$\alpha$} (m-2-1);
        \draw[ ->] (m-1-2)to node[left]{${\pi_s}_*$} (m-2-2);
        
          \draw[ ->] (m-1-3)to node[right]{${\pi_s}_!$} (m-2-3);
        \draw[ ->] (m-1-4)to node[right]{$\beta = \alpha^*$} (m-2-4);
        
        \draw[->, bend left =20] (m-1-1) to node[above]{$-\gamma\rho$} (m-1-4);
         \draw[->, bend right =20] (m-2-1) to node[above]{$-\gamma\rho$} (m-2-4);

     \end{tikzpicture}
     \end{center}
   %  \end{figure} 
 
 and 
 
%  \begin{figure}[H]
\begin{center}
    \begin{tikzpicture}
      \matrix[dmatrix] (m)
      {
      \CH_{\T_S}^{n-k}\left(\X_s^{[1]}\right) & \CH_k^{\T_S}\left(\X_s\right) & \CH_{\T_S}^{n+1-k}\left(\X_s\right) & \CH_{\T_S}^{n+1-k}\left(\X_s^{[1]}\right) \\
       \CH_{\T_S}^{n-k}\left(\X_s'^{[1]}\right) & \CH_k^{\T_S}\left(\X_s'\right) & \CH_{\T_S}^{n+1-k}\left(\X_s'\right) & \CH_{\T_S}^{n+1-k}\left(\X_s'^{[1]}\right)\\
      };
      \draw[->] (m-1-1) to (m-1-2);
      \draw[->] (m-1-2) to node[above]{$\iota^*\iota_*$}  (m-1-3);
        \draw[->] (m-1-3) to (m-1-4);
          \draw[->] (m-2-1) to (m-2-2);
      \draw[->] (m-2-2) to node[above]{$\iota^*\iota_*$}  (m-2-3);
        \draw[->] (m-2-3) to (m-2-4);
        
      \draw[ ->] (m-1-1)to node[left]{$\zeta$} (m-2-1);
        \draw[ ->] (m-1-2)to node[left]{${\pi_s}^!$} (m-2-2);
        
          \draw[ ->] (m-1-3)to node[right]{${\pi_s}^*$} (m-2-3);
        \draw[ ->] (m-1-4)to node[right]{${\pi_s}^*$} (m-2-4);
        
        \draw[->, bend left =20] (m-1-1) to node[above]{$-\gamma\rho$} (m-1-4);
         \draw[->, bend right =20] (m-2-1) to node[above]{$-\gamma\rho$} (m-2-4);

     \end{tikzpicture}
     \end{center}
   %  \end{figure} 
     
     Hence, we obtain induced maps on the direct and inverse systems
     
     \begin{eqnarray}\label{eq:ddc-forms}
     dd^c \colon \widetilde{A}_{\T_K}^{k,k}(X) \longrightarrow A_{\closed, \T_K}^{k+1,k+1}(X)
     \end{eqnarray}
and 
     \begin{eqnarray}\label{eq:ddc-currents}
      dd^c \colon \widetilde{D}_{\T_K}^{k,k}(X) \longrightarrow D_{\closed, \T_K}^{k+1,k+1}(X).
     \end{eqnarray}
 
 \end{prop}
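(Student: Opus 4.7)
The strategy is to lift the commutativity of each inner square in the two diagrams to the auxiliary groups $\CH_{\T_S}^{*}(\X_s^{[1]}/S)$, where by Lemma~\ref{lem:compo} the map $\iota^*\iota_*$ agrees with $-\gamma\rho$. Concretely, for the first diagram it suffices to establish the identity
\[
\beta\circ(-\gamma'\rho') \;=\; (-\gamma\rho)\circ\alpha
\]
as maps $\CH_{\T_S}^{n-k}(\X_s'^{[1]}/S)\to\CH_{\T_S}^{n+1-k}(\X_s^{[1]}/S)$; the inner squares then follow from the surjection $\CH_{\T_S}^{*}(\X_s^{[1]}/S)\twoheadrightarrow\CH^{\T_S}_{*}(\X_s/S)$ furnished by \eqref{eq:hom} and from the injection $\CH_{\T_S}^{*}(\X_s/S)\hookrightarrow\CH_{\T_S}^{*}(\X_s^{[1]}/S)$ furnished by \eqref{eq:coho}. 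Similarly, for the second diagram the required outer identity is
\[
{\pi_s}^*\circ(-\gamma\rho) \;=\; (-\gamma'\rho')\circ\zeta.
\]

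To verify the covariant identity I would evaluate both sides componentwise at an old vertex $v\in\Pi(0)$. By the definition of $\alpha$, only the data $\{f_{v'}\}$ at old vertices $v'\in\Pi(0)$ contribute to the right-hand side, and Lemma~\ref{lem:compo} then expresses that component as a sum over bounded edges of $\Pi$ at $v$ of pushforward/pullback terms involving ${\pi_v}_*$. On the left-hand side, $\beta$ extracts the $v$-component of $(-\gamma'\rho')(f)$, which is a sum over bounded edges of $\Pi'$ emanating from $v$. The key combinatorial step is to partition the edges of $\Pi'$ at $v$ according to which polyhedron of $\Pi$ they lie inside: edges lying over a bounded edge of $\Pi$ contribute (after combining with the telescoping contributions of new interior vertices) the corresponding term on the right-hand side, while edges lying in the interior of higher-dimensional polyhedra of $\Pi$ contribute terms that cancel in pairs by the pushforward formula in Proposition~\ref{prop:pullback}(iii).

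The contravariant identity is verified in a dual fashion. At an old vertex $v\in\Pi(0)$, the value ${\pi_s}^*(-\gamma\rho(f))_v$ is just $(-\gamma\rho(f))_v$ pulled back along the identity, while $(-\gamma'\rho')(\zeta f)_v$ again splits into contributions from edges of $\Pi'$ inside the various polyhedra of $\Pi$ at $v$. At a new vertex $v'\in\Pi'(0)\setminus\Pi(0)$ contained in the relative interior of some $\sigma\in\Pi$, the right-hand side is a restriction of $(-\gamma\rho(f))_v$ to $\sigma$, and the left-hand side simplifies to the same expression by the local affine nature of $\zeta$ inside $\sigma$, together with the fact that the bounded edges of $\Pi'$ at $v'$ all lie within $\sigma$ and their $\varphi_{v',\gamma}$-factors assemble into the restricted piecewise polynomial datum.

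The main obstacle will be the careful bookkeeping of how bounded edges of $\Pi'$ decompose relative to $\Pi$ and of the sign conventions induced by the chosen ordering of vertices along each edge; one must also check that the combinatorial manipulations are compatible with passage to the equivariant Chow quotients, that is, that the identities continue to hold modulo the image of $\gamma$ (respectively, restricted to the kernel of $\rho$). Once both commutativity statements are in place, the induced maps
\[
dd^c\colon \widetilde{A}_{\T_K}^{k,k}(X) \longrightarrow A_{\closed,\T_K}^{k+1,k+1}(X)
\quad\text{and}\quad
dd^c\colon \widetilde{D}_{\T_K}^{k,k}(X) \longrightarrow D_{\closed,\T_K}^{k+1,k+1}(X)
\]
arise immediately from the universal properties of the direct and inverse limits in Definition~\ref{def:forms&currents}.
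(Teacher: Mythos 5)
Your proposal is correct and follows the same route as the paper, whose proof consists of the single sentence that one checks commutativity using the combinatorial descriptions of the maps involved; your reduction to the outer identities $\beta\circ(-\gamma'\rho')=(-\gamma\rho)\circ\alpha$ and ${\pi_s}^*\circ(-\gamma\rho)=(-\gamma'\rho')\circ\zeta$ on the level of $\X_s^{[1]}$, together with the surjection onto $\on{coker}(\gamma)$ and the injection of $\on{ker}(\rho)$, is exactly the intended verification. Your sketch is in fact more detailed than the paper's own argument, and the passage to the limits via Definition~\ref{def:forms&currents} is handled the same way.
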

 \begin{proof}
One checks the commutativity of the diagrams using the combinatorial descriptions of the maps involved. 
 \end{proof}
 We now give a combinatorial description of the maps \eqref{eq:ddc-forms} and \eqref{eq:ddc-currents}.  First note that by the exact sequence in Section \ref{subsec:ker} we have that 
 \begin{eqnarray}\label{eq:pa}
 \on{Im}(-\gamma\rho) \subseteq \on{ker}(\rho).
 \end{eqnarray}
 \begin{itemize}
 \item \eqref{eq:ddc-forms} Let $c \in \widetilde{A}_{\T_S}^{k,k}(X)$.  The element $c$ is determined by an element $\widehat{c} \in \CH_{n-k}^{\T_S}\left(\X_s^{[1]}\right)$ for some model $\X$.  Then $dd^c(c)$ is determined by the element $-\gamma \rho \left(\widehat{c}\right) \in \CH_{\T_S}^{k+1}\left(\X_s^{[1]}\right)$.
 
 Note that by \eqref{eq:pa} we have that $-\gamma \rho \left(\widehat{c}\right)$ indeed determines an affine piecewise polynomial function on some model $\X' \geq \X$ (not necessarily $\X$). 
 \item \eqref{eq:ddc-currents} Let $c \in \widetilde{D}_{\T_S}^{k,k}(X)$.  The element $c$ corresponds to a tuple $\widehat{c} = \left(f_{\Pi'}\right)_{\Pi'}$ with $f_{\Pi'} \in \CH^{\T_S}_{n-k}\left(\X_{\Pi',s}^{[1]}\right)$ compatible under the map $\alpha$. Then $dd^c(c)$ is given by the tuple $\left(-\gamma\rho(f_{\Pi'})\right)_{\Pi'}$ with $-\gamma\rho(f_{\Pi'}) \in \CH_{\T_S}^{k+1}\left(\X_{\Pi,s}^{[1]}\right)$. 
 
 Also here, by \eqref{eq:pa}, each $-\gamma \rho\left(f_{\Pi'}\right)$ gives an affine piecewise polynomial function (defined on a possibly higher model). Hence, $\left(-\gamma \rho\left(f_{\Pi'}\right)\right)_{\Pi'}$ defines a tuple of affine piecewise polynomial functions, and these are compatible under ${\pi_s}_!$. 
 \end{itemize}
 \begin{exa}\label{exa:green-current}
 For $k=0$, an element $c \in \widetilde{D}_{\T_S}^{0,0}(X)$ is an element in $\varprojlim_{\Pi \in \R(\Sigma)}\bigoplus_{v \in \Pi(0)}\Q$, i.e. it associates to each rational point in $N_{\Q}$ a rational number, hence it defines a function $N_{\Q} \to \Q$. This function is associated to $dd^c(c) \in D^{1,1}_{\closed, \T_K}$ in Proposition \ref{prop:com-forms-currents}. 
 \end{exa}
 The $dd^c$ maps satisfy the following regularity property. It is the equivariant analogue of \cite[Theorem 2.3.1 iv)]{BGS}.
 \begin{prop}\label{prop:regularity} Assume that $g \in \widetilde{D}^{k,k}_{\T_K}(X)$ is such that $dd^c(g)$ lies in the subgroup $A^{k+1,k+1}_{\closed,\T_K}(X)$ of $D^{k+1,k+1}_{\closed,\T_K}(X)$. Then $g$ lies in $\widetilde{A}^{k,k}_{\T_K}(X) \subseteq \widetilde{D}^{k,k}_{\T_K}(X)$. 
 \end{prop}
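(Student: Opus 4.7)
The plan is to show that $g$ coincides, as an element of $\widetilde{D}^{k,k}_{\T_K}(X)$, with the image under the canonical inclusion $\widetilde{\iota}\colon\widetilde{A}^{k,k}_{\T_K}(X)\hookrightarrow\widetilde{D}^{k,k}_{\T_K}(X)$ of the class represented by its own component on a model $\Pi_0$ that stabilizes $dd^c(g)$. First, I would unpack the hypothesis: since $dd^c(g)\in A^{k+1,k+1}_{\closed,\T_K}(X)$, the direct-limit definition provides a model $\Pi_0\in R(\Sigma)$ and an element $h\in\CH^{k+1}_{\T_S}(\X_{\Pi_0,s}/S)$ such that, for every $\Pi'\geq\Pi_0$ with induced map $\pi\colon\X_{\Pi'}\to\X_{\Pi_0}$,
\[
\iota^*\iota_*(g_{\Pi'})=\pi_s^*(h);
\]
in particular, $\iota^*\iota_*(g_{\Pi_0})=h$.

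Next, I would introduce the candidate $g^{(0)}:=\widetilde{\iota}(g_{\Pi_0})\in\widetilde{A}^{k,k}_{\T_K}(X)\subseteq\widetilde{D}^{k,k}_{\T_K}(X)$; by the characterizing property of $\widetilde{\iota}$, its $\Pi'$-component for $\Pi'\geq\Pi_0$ equals $\pi_s^!(g_{\Pi_0})$. Form the difference $\delta:=g-g^{(0)}$. Using the commutative square $\iota^*\iota_*\circ\pi_s^!=\pi_s^*\circ\iota^*\iota_*$ from the diagram preceding this proposition, one computes $dd^c(g^{(0)})_{\Pi'}=\pi_s^*\bigl(\iota^*\iota_*(g_{\Pi_0})\bigr)=\pi_s^*(h)=dd^c(g)_{\Pi'}$, so $dd^c(\delta)=0$ and each $\delta_{\Pi'}$ lies in $\ker(\iota^*\iota_*)$. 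Moreover, the inverse-limit compatibility ${\pi_s}_*(g_{\Pi'})=g_{\Pi_0}$ combined with the identity ${\pi_s}_*\circ\pi_s^!=\on{id}$, which holds since $\pi$ is a proper birational morphism of toric schemes of degree one, yields ${\pi_s}_*(\delta_{\Pi'})=0$.

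Then I would invoke Corollary \ref{cor:ker} together with Proposition \ref{prop:ker=koker}: the subgroup $\ker(\iota^*\iota_*)$ is independent of the toric model and the natural pushforward maps between these kernels on different models are isomorphisms. In particular, the restriction ${\pi_s}_*\colon\ker(\iota^*\iota_*)_{\Pi'}\xrightarrow{\sim}\ker(\iota^*\iota_*)_{\Pi_0}$ is an isomorphism, forcing $\delta_{\Pi'}=0$ and hence $g_{\Pi'}=\pi_s^!(g_{\Pi_0})$ for all $\Pi'\geq\Pi_0$. For an arbitrary $\Pi''\in R(\Sigma)$, choosing a common refinement $\Pi'''\geq\Pi_0,\Pi''$ and pushing down via ${\pi''_s}_*$ yields $g_{\Pi''}=(g^{(0)})_{\Pi''}$, so $g=g^{(0)}\in\widetilde{A}^{k,k}_{\T_K}(X)$, as desired.

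The main obstacle is the last invocation: verifying that ${\pi_s}_*$ restricts to an isomorphism on the kernels of $\iota^*\iota_*$. Corollary \ref{cor:ker} identifies each such kernel abstractly with $\CH^k_{\T_K}(X_\Sigma/K)$, but the naturality of this identification under pushforward requires a separate check. I would argue this by factoring ${\pi_s}_*$ through the canonical smooth model $\X_\Sigma$, which is minimal in $R(\Sigma)$ and on which $\iota^*\iota_*$ vanishes identically so that its kernel equals the full homology group, and then using the explicit combinatorial descriptions of the pushforward $\alpha$ from Section \ref{sec:forms-currents} and of $-\gamma\rho$ from Lemma \ref{lem:compo} to verify that the relevant pushforwards restrict to the expected isomorphisms.
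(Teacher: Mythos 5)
Your argument is correct in outline, but it takes a genuinely different route from the paper. The paper's proof is a direct combinatorial one: it unwinds the explicit formula for $-\gamma\rho$ from Lemma \ref{lem:compo} to argue that if $dd^c(g)$ is determined on a single model $\Pi$, then $g$ itself must already be determined on some (possibly finer) model $\Pi'$, and then concludes by Proposition \ref{prop:com-forms-currents}. You instead run the structural argument in the style of \cite{BGS}: compare $g$ with the form $g^{(0)}=\widetilde{\iota}(g_{\Pi_0})$, observe via the compatibility square $\iota^*\iota_*\circ\pi_s^{!}=\pi_s^*\circ\iota^*\iota_*$ that the difference $\delta$ has components in $\on{ker}(\iota^*\iota_*)$ and is killed by ${\pi_s}_*$ (using ${\pi_s}_*\pi_s^{!}=\on{id}$, which indeed holds since $\pi_*[\pi]=1$ for a proper birational map of regular models), and then invoke model-independence of the kernel to force $\delta=0$. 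What your approach buys is transparency: it isolates exactly which input is needed, namely that the isomorphisms between the kernels of $\iota^*\iota_*$ on different models are \emph{realized by} ${\pi_s}_*$ (equivalently, that ${\pi_s}_*$ is injective on $\on{ker}(\iota^*\iota_*)$), rather than merely that these kernels are abstractly isomorphic as stated in Proposition \ref{prop:ker=koker}. That naturality is the equivariant analogue of \cite[Theorem~2.2.1]{BGS} and is the one point where your proof needs more than the paper literally provides; you correctly flag it as the main obstacle, and your plan of checking it combinatorially via the canonical model and the explicit descriptions of $\alpha$ and $-\gamma\rho$ is reasonable and at the same level of detail as the paper's own justification of Proposition \ref{prop:ker=koker}. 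The paper's route avoids this dependency at the price of an unexplicated combinatorial claim; yours makes the logical skeleton explicit at the price of having to prove the naturality statement once.
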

 \begin{proof}
 Suppose that $dd^c(g)$ is an affine piecewise polynomial function, determined on some toric model $\Pi \in R(\Sigma)$. Then it follows from the combinatorial description of the map $-\gamma \rho$ given above, that $g$ has to be already determined on some (possibly higher) toric model $\Pi'$.  The statement then follows from Proposition \ref{prop:com-forms-currents}.
 \end{proof}
 Now we consider the question of mapping closed forms or currents into forms or currents modulo $\partial$, $\bar{\partial}$.  In combinatorial terms, this asks whether we can write affine piecewise polynomial functions as direct sums of piecewise polynomials functions, indexed over all the vertices of the polyhedral complex, in such a way that functorial compatibilities are satisfied. 

Let $\X = \X_{\Pi}$ be a model of $X$ and consider the fundamental class of the special fiber $[\X_s] \in \CH_n^{\T_S}(\X_s)\simeq \CH_n^{\T_S}(\X_s^{[1]})\simeq \bigoplus_{v \in \Pi(0)} \Q$.  
Then $[\X_s]$ corresponds then to an element $(m_v)_{v\in \Pi(0)}$, where $m_v \in \Q$ is the multiplicity of $V(v)$ in $\X_s$. 

Multiplication by $[\X_s]$ gives the cap product 
\[
\cap [\X_s]\colon \CH_{\T_S}^k(\X_s) \longrightarrow \CH_{n-k}^{\T_S}(\X_s)
\]
which in this case is given by 
\[
f \in PP^k(\Pi) \longmapsto f \cap [\X_s] = \left(m_vf_v\right)_{v \in \Pi(0)} \in \CH_{\T_S}^k(\X_s^{[1]}) \simeq \CH_{n-k}^{\T_S}(\X_s^{[1]})
\]
and taking the image in $\CH_{n-k}^{\T_S}(\X_s)$.

Given a map of toric models $\X' \to \X$, $f \in \CH^k_{\T_S}(\X_s)$ and $f' \in \CH^k_{\T_S}(\X'_s)$, by the functoriality properties of the equivariant operational Chow groups, we have that 
\[
{\pi_s}^!\left( f \cap [\X_s]\right) = \left(\pi_s^* f\right) \cap [\X_s']
\]
and 
\[
{\pi_s}_*\left(f' \cap [\X_s']\right) = \left({\pi_s}_!f'\right)\cap [\X_s].
\]
Hence, the cap products $\cap [\X_s]$ induce maps of direct and inverse limits
\[
g\colon  A^{k,k}_{\closed, \T_S}(X) \longrightarrow  \widetilde{A}^{k,k}_{\T_S} 
\]
\[
g' \colon D_{\closed, \T_S}^{k,k}(X) \longrightarrow  \widetilde{D}_{\T_S}^{k,k}(X).
\]
We obtain a commutative diagram  
 %\begin{figure}[H]
\begin{center}
    \begin{tikzpicture}
      \matrix[dmatrix] (m)
      { A^{k,k}_{\closed, \T_S}(X) & D_{\closed, \T_S}^{k,k}(X)\\
      \widetilde{A}^{k,k}_{\T_S} & \widetilde{D}_{\T_S}^{k,k}(X) \\
      };
      \draw[->] (m-1-1) to (m-1-2);
    \draw[->] (m-1-1) to node[left]{$g$}(m-2-1);
     \draw[->] (m-2-1) to (m-2-2);
      \draw[->] (m-1-2) to node[right]{$g'$} (m-2-2);

    \end{tikzpicture}
     \end{center}
  %   \end{figure} 
     where the horizontal maps are the canonical injections mentioned above.
     The following proposition follows from the combinatorial description of the maps taking into account Equation \eqref{eq:pa}. It is the equivariant analogue of \cite[Proposition 1.4.10]{BGS}.
\begin{prop}
The compositions
\begin{eqnarray*}
\begin{tikzcd}
A_{\closed,\T_K}^{k,k}(X)  \arrow["g"]{r} & \widetilde{A}^{k,k}_{\T_K}(X) \arrow["dd^c"]{r}&A_{\closed,\T_K}^{k+1,k+1}(X)
\end{tikzcd}
\\
\begin{tikzcd}
D_{\closed,\T_K}^{k,k}(X)  \arrow["g'"]{r} & \widetilde{D}^{k,k}_{\T_K}(X) \arrow["dd^c"]{r}&D_{\closed,\T_K}^{k+1,k+1}(X)
\end{tikzcd}
\\
\begin{tikzcd}
\widetilde{A}_{\T_K}^{k,k}(X)  \arrow["dd^c"]{r} & A^{k+1,k+1}_{\closed,\T_K}(X) \arrow["g"]{r}& \widetilde{A}_{\T_K}^{k+1,k+1}(X)
\end{tikzcd}
\\
\begin{tikzcd}
\widetilde{D}_{\T_K}^{k,k}(X)  \arrow["dd^c"]{r} & D^{k+1,k+1}_{\closed,\T_K}(X) \arrow["g'"]{r}& \widetilde{D}_{\T_K}^{k+1,k+1}(X)
\end{tikzcd}
\end{eqnarray*}
are all $0$.
\end{prop}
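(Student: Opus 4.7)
The plan is to verify each of the four compositions vanishes on a single toric model and then pass to the appropriate direct or inverse limit. Since the maps $dd^c$, $g$ and $g'$ are all defined via limits over $R(\Sigma)$ and are compatible with the transition morphisms coming from refinements of toric models by the commutative diagrams established in Section~\ref{sec:forms-currents}, it suffices to show, on a single toric model $\X=\X_{\Pi}$, that $\iota^*\iota_*\circ g$ and $g\circ\iota^*\iota_*$ both vanish (and analogously for~$g'$).

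The key observation I would use is that on a single model the cap-product map $g$ (respectively $g'$) coincides with the map
\[
a\colon \CH^k_{\T_S}(\X_s/S) \longrightarrow \CH^{\T_S}_{n-k}(\X_s/S)
\]
from Section~\ref{subsec:ker}, namely the composition $\CH^k_{\T_S}(\X_s/S)\to\CH^k_{\T_S}(\X_s^{[1]}/S)\to\CH^{\T_S}_{n-k}(\X_s^{[1]}/S)\to\CH^{\T_S}_{n-k}(\X_s/S)$: both are realized at the level of $\X_s^{[1]}$ by componentwise multiplication by the weights $(m_v)_v$ appearing in $[\X_s]=\sum_v m_v[V(v)]$, with the multiplicities entering via the cap product with $[\X_s]$ for $g$ and via Poincar\'e duality on each component $V(v)$ for~$a$. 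Since $dd^c$ is induced by $\iota^*\iota_*$ by construction, the four vanishings then follow immediately from the long exact sequence
\[
\cdots\to\CH^{\T_S}_{n+1-k}(\X_s/S)\xrightarrow{\iota^*\iota_*}\CH^k_{\T_S}(\X_s/S)\xrightarrow{a}\CH^{\T_S}_{n-k}(\X_s/S)\xrightarrow{\iota^*\iota_*}\CH^{k+1}_{\T_S}(\X_s/S)\to\cdots
\]
whose exactness yields $\iota^*\iota_*\circ a=0$ and $a\circ\iota^*\iota_*=0$. Passing to the direct (resp.\ inverse) limit along $R(\Sigma)$ translates these into the vanishing of $dd^c\circ g$, $g\circ dd^c$ (resp.\ $dd^c\circ g'$, $g'\circ dd^c$).

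An alternative and more hands-on route, which avoids invoking the long exact sequence, is to verify the vanishing directly by combinatorics: one uses the realization of $dd^c$ at the level of $\X_s^{[1]}$ as the operator $-\gamma\rho$ (Section~\ref{sec:iota}), the projection formula $u_{v,\gamma\ast}u^*_{v,\gamma}=\varphi_{v,\gamma}\cdot\on{id}$, and the inclusion $\on{Im}(-\gamma\rho)\subseteq\on{ker}(\rho)$ of Equation~\eqref{eq:pa}. The main obstacle in either approach is the careful bookkeeping of the weights $m_v$ in the possibly non-reduced case; once the identification $g=a$ at the level of $\X_s^{[1]}$ is unwound, both routes go through uniformly.
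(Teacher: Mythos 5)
Your proposal is correct and, in substance, coincides with the paper's proof: the paper's one-line argument is exactly your ``alternative route'' (the combinatorial realization of $g$, $g'$ as $f\mapsto(m_vf_v)_v$ and of $dd^c$ as $-\gamma\rho$ on $\X_s^{[1]}$, combined with the inclusion $\on{Im}(-\gamma\rho)\subseteq\on{ker}(\rho)$ of Equation~\eqref{eq:pa}), and that inclusion is itself extracted from the long exact sequence of Section~\ref{subsec:ker} that your primary route invokes directly, so the two routes rest on the same input. The only point to make explicit in the primary route is that identifying $g$ with the map $a$ on a single model requires the middle arrow in the definition of $a$ to carry the multiplicities $(m_v)_v$ of $[\X_s]$ (otherwise $g$ and $a$ differ by the invertible diagonal operator $\mathrm{diag}(m_v)$ inserted \emph{between} restriction and Poincar\'e duality, and the containment $\on{Im}(g)\subseteq\on{ker}(\iota^*\iota_*)$ would not formally follow from $\on{Im}(a)\subseteq\on{ker}(\iota^*\iota_*)$) --- precisely the bookkeeping you flag at the end.
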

\subsection{Some properties}
We list some properties of the forms and currents defined in the previous section. As before,  $X= X_{\Sigma}$ denotes a smooth projective toric variety over the discretely valued field $K$.\\

\textbf{Products.} 
The following proposition follows from the combinatorial description of the forms and currents in Proposition \ref{prop:com-forms-currents}.
\begin{prop} 
\begin{enumerate}
\item[i)] The group $A_{\closed,\T_K}^{*,*}(X)$ is a graded commutative ring. The product is given by the product of affine piecewise polynomial functions. 
The groups $\widetilde{A}_{\T_K}^{*,*}(X)$, $D_{\closed,\T_K}^{*,*}(X)$ and $\widetilde{D}_{\T_K}^{*,*}(X)$ are all graded $A_{\closed,\T_K}^{*,*}(X)$-modules.
\item[ii)] The $dd^c$ map on both forms and currents is a map of graded $A_{\closed,\T_K}^{*,*}(X)$-modules.
\end{enumerate}
\end{prop}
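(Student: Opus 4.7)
The strategy for both (i) and (ii) is to work level by level along the direct or inverse system of toric models, exploiting the combinatorial identifications of Sections~\ref{sec:special} and~\ref{sec:forms-currents}, and then upgrading the resulting identities to the direct and inverse limits.

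\textbf{Proof of (i).} For each $\Pi\in R(\Sigma)$ the group $\CH^*_{\T_S}(\X_{\Pi,s}/S)\simeq PP^*(\Pi)$ (Theorem~\ref{th:cohomology-special}) is a graded commutative $\Q$-algebra under pointwise multiplication: if $f,g\in PP^*(\Pi)$, the tuple $(f_v g_v)_{v\in\Pi(0)}$ again satisfies condition~(ii) of Definition~\ref{def:affine-pp}, since $f_{v,\Lambda}=f_{v',\Lambda}$ and $g_{v,\Lambda}=g_{v',\Lambda}$ force $(fg)_{v,\Lambda}=(fg)_{v',\Lambda}$ on any common $\Lambda$. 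The transition map $\pi_s^{\ast}$ is given combinatorially by pointwise restriction (Section~\ref{sec:forms-currents}), hence is a ring homomorphism, so the direct limit $A^{*,*}_{\closed,\T_K}(X)$ inherits a graded commutative ring structure. For the module structures on the other three groups, I would define the action level by level by the cap and cup products on $\CH^*_{\T_S}(\X_{\Pi,s}/S)$ and $\CH^{\T_S}_*(\X_{\Pi,s}/S)$, and then verify compatibility with the four flavors of transition maps. The key compatibilities are the projection formulas
\[
\pi_s^{!}(c\cap\alpha)=\pi_s^{\ast}c\cap \pi_s^{!}\alpha,\qquad \pi_{s\ast}(\pi_s^{\ast}c\cap \alpha)=c\cap \pi_{s\ast}\alpha,\qquad \pi_{s!}(\pi_s^{\ast}c\cup\beta)=c\cup \pi_{s!}\beta,
\]
each of which can be checked directly from the explicit combinatorial expressions for $\pi_s^{\ast},\pi_s^{!},\pi_{s\ast},\pi_{s!}$ in Section~\ref{sec:forms-currents}, or equivalently from the general projection formula for equivariant operational Chow classes recalled in Section~\ref{sec:eq-chow}.

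\textbf{Proof of (ii).} At each level the operator $dd^c$ is given by $\iota^{\ast}\iota_{\ast}$, which combinatorially is the map $-\gamma\rho$ of Lemma~\ref{lem:compo}. Given $c=(f_v)_v\in PP^k(\Pi)$ and a tuple $g=(g_v)_v$ representing an element of $\widetilde{A}^{\ell,\ell}_{\T_K}(X)$ or $\widetilde{D}^{\ell,\ell}_{\T_K}(X)$, the identity $dd^c(c\cdot g)=c\cdot dd^c(g)$ reduces, vertex by vertex and edge by edge, to
\[
{u_{v,\gamma}}_{\ast}\!\left(u_{v_{\gamma},\gamma}^{\ast}(f_{v_{\gamma}}g_{v_{\gamma}})\right)=f_v\cdot {u_{v,\gamma}}_{\ast}\!\left(u_{v_{\gamma},\gamma}^{\ast}g_{v_{\gamma}}\right),
\]
together with the tautology $\varphi_{v,\gamma}f_v g_v=f_v\cdot \varphi_{v,\gamma}g_v$. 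The displayed identity follows from two facts: the projection formula ${u_{v,\gamma}}_{\ast}(u_{v,\gamma}^{\ast}h\cdot \alpha)=h\cdot {u_{v,\gamma}}_{\ast}(\alpha)$, combined with the crucial combinatorial equality $u_{v_{\gamma},\gamma}^{\ast}f_{v_{\gamma}}=u_{v,\gamma}^{\ast}f_v$, which is precisely condition~(ii) of Definition~\ref{def:affine-pp} applied to the polyhedra $\Lambda\in\Pi$ containing the edge $\gamma$. Summing over all bounded edges adjacent to $v$ yields the desired Leibniz identity level by level, and part~(i) ensures that it passes coherently to the direct and inverse limits.

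The main obstacle is not conceptual but organizational: one must carefully track which of the four transition maps $\pi_s^{\ast},\pi_s^{!},\pi_{s\ast},\pi_{s!}$ applies to each of the four spaces, and match each to the correct form of the projection formula. The clean combinatorial description of these maps in Section~\ref{sec:forms-currents} makes this tractable. It is worth highlighting that the essential input in the Leibniz identity of~(ii) is precisely condition~(ii) of Definition~\ref{def:affine-pp}: for tuples in $\CH^*_{\T_S}(\X_s^{[1]}/S)$ that fail this compatibility, $dd^c$ would not be $A^{*,*}_{\closed,\T_K}(X)$-linear, so this condition is exactly what makes the module statement work.
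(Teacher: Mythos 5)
Your proposal is correct and follows the same route the paper intends: the paper offers no written proof beyond asserting that the proposition "follows from the combinatorial description of the forms and currents" and that the structures are determined by compatibility with the cup and cap products on each toric model. Your level-by-level verification via the projection formulas, and in particular your identification of condition~(ii) of Definition~\ref{def:affine-pp} (i.e.\ $c\in\ker\rho$) as the precise input making $dd^c=-\gamma\rho$ into an $A^{*,*}_{\closed,\T_K}(X)$-linear map, correctly supplies the details the paper leaves implicit.
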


Note that the ring and module structures are uniquely determined by the requirement that they be compatible with the product on $\CH_{\T_S}^*(\X_s)$ and the $\CH_{\T_S}^*(\X_s)$-module structure on $\CH_*^{\T_{S}}(\X_s)$ for any toric model $\X \in R(X)$.

\begin{rem}
There is also an associative product on $\widetilde{A}_{\T_K}^{*,*}(X)$ 
\[
\widetilde{A}_{\T_K}^{*,*}(X) \otimes \widetilde{A}_{\T_K}^{*,*}(X) \longrightarrow  \widetilde{A}_{\T_K}^{*,*}(X)
\]
given by 
\begin{eqnarray}\label{eq:prod}
 c \otimes d  \longmapsto  c\cdot dd^c d.
\end{eqnarray}
We expect by \cite[Example 17.4.4]{fulint} that this product is also commutative. 
\end{rem}
\vspace{0.1cm}
\textbf{Functoriality.} Let $f \colon X \to Y$ be an equivariant map of smooth projective toric varieties over $K$. By resolution of singularities, for each toric model $\mathfrak{Y}$ of $Y$ corresponding to some SCR polyhedral complex $\Psi$, there exists a regular toric model $\X \in R(X)$ corresponding to $\Pi$ and a commutative diagram 
% \begin{figure}[H]
\begin{center}
    \begin{tikzpicture}
      \matrix[dmatrix] (m)
    { X & \X \\
     Y & \mathfrak{Y}\\
      };
      \draw[->] (m-1-1) to (m-1-2);
    \draw[->] (m-1-1) to node[left]{$f$}(m-2-1);
     \draw[->] (m-2-1) to (m-2-2);
      \draw[->] (m-1-2) to node[right]{$h$} (m-2-2);
    
     \end{tikzpicture}
     \end{center}
   %  \end{figure} 
     One calls $\X$ a \emph{model of $X$ over $\mathfrak{Y}$.} Denote by $h_s\colon \X_s \to \mathfrak{Y}_s$ the induced map between the special fibers. This is compatible with maps of toric models $\mathfrak{Y}' \to \mathfrak{Y}$ in $R(Y)$. Hence one may take the direct limit of the maps 
     \[
     h_s^*\colon \CH_{\T_S}^*\left(\mathfrak{Y}_s\right) \longrightarrow \CH_{\T_S}^*\left(\X_s\right)
     \]
     and obtain a ring homomorphism 
     \[
     f^* \colon A^{*,*}_{\closed, \T_K}(Y) \longrightarrow A^{*,*}_{\closed, \T_K}(X).
     \]
By Proposition \ref{prop:com-forms-currents} this is given as the pullback map of affine piecewise polynomial functions.

In this way, $A^{*,*}_{\closed, \T_K}(\cdot)$ defines a contravariant functor from the category of smooth projective toric varieties over $K$ to rings of affine piecewise polynomial functions.
     
     Dually, let $d = \dim(X) - \dim(Y)$. Taking the inverse limit of the maps 
     \[
     {h_s}_*\colon \CH_*^{\T_S}\left(\X_s\right) \longrightarrow \CH_*^{\T_S}\left(\mathcal{Y}_s\right)
     \]
     we obtain 
     \[
     f_*\colon \widetilde{D}^{*,*}_{\T_K}(X) \longrightarrow \widetilde{D}_{\T_S}^{*-d,*-d}(Y).
     \]
     Similarly we get maps 
       \[
     f^* \colon \widetilde{A}^{*,*}_{\T_K}(Y) \longrightarrow \widetilde{A}_{\T_S}^{*-d,*-d}(X)
     \]
     and
     \[
     f_* \colon D_{\closed,\T_K}^{*,*}(X) \longrightarrow D_{\closed,\T_S}^{*-d,*-d}(Y).
     \]
     
     \begin{exa}\label{degree}
     Consider the structure map $f \colon X\to \on{Spec}(K)$. 
We have \[
\widetilde{D}_{\T_K}^{k,k}\left(\on{Spec}K\right) = \CH^{\T_S}_{-k}(S) \simeq \on{Sym}^k\left(\widetilde{M}_{\Q}\right)
\]
for $k \in [0, \infty)$ (see Example \ref{exa:point}). 
     
%     Since $\widetilde{D}_{\T_K}^{0,0}\left(\on{Spec}K\right)\simeq \on{Sym}(N_{\Q})$ 
     We obtain a map 
     \[
     f_* \colon \widetilde{D}^{n,n}_{\T_K}(X) \longrightarrow  \on{Sym}^0\left(\widetilde{M}_{\Q}\right).
     \]
     
     By composition we obtain 
     \[
     \begin{cases} & A^{n,n}_{\closed, \T_K}(X) \\ & D^{n,n}_{\closed, \T_K}(X) \\ & \widetilde{A}^{n,n}_{\T_K}(X) \\ & \widetilde{D}^{n,n}_{\T_K}(X) \end{cases}  \; \longrightarrow \widetilde{D}^{n,n}_{\T_K}(X)  \longrightarrow  \on{Sym}^0\left(\widetilde{M}_{\Q}\right).
     \]
     We denote these maps by 
     \[
     \alpha \longmapsto \int_X\alpha
     \]
     and call them the \emph{equivariant degree} maps. These are the equivariant analogues of the degree maps in \cite[Example 1.6.3]{BGS}.
     
    \end{exa}
     
\subsection{Equivariant $\delta$- and Green currents}
Let $X = X_{\Sigma}$ be a smooth complete toric variety over $K$ of dimension $n$. As before, for any complete, regular SCR polyhedral complex $\Pi$ in $N_{\R}$ with $\rec(\Pi) = \Sigma$ we denote by $ \X_{\Pi}$ the corresponding complete, regular toric model of $X$ over $S$ of $S$-absolute dimension $n+1$.  

We define the equivariant analogues of $\delta$- and Green currents from \cite[Sections 1.7, 1.8]{BGS} and describe them in combinatorial terms. \\

\textbf{Equivariant $\delta$-currents.}
Let $Y \subseteq X$ be a closed invariant subvariety of codimension $k$. 
 For $\Pi \in R(\Sigma)$ we denote by $\overline{Y}^{\Pi} \subseteq \X_{\Pi}$ the Zariski closure of $Y$ in $\X_{\Pi}$ and by $\left[\overline{Y}^{\Pi}\right] \in \CH^k_{\T_S}\left(\X_{\Pi}\right)$ its associated equivariant cohomology class.

We also write $\iota\colon \X_{\Pi',s} \hookrightarrow \X_{\Pi'}$ for the inclusion of the special fiber for any given model $\Pi' \in R(\Sigma)$.  
The following proposition follows exactly as in \cite[Proposition-Definition 1.7.1]{BGS}.
\begin{prop} 
The association $\Pi \mapsto \iota^*\left[\overline{Y}^{\Pi}\right]$ gives a well defined class $\delta_Y \in D^{k,k}_{\closed,\T_S}(X)$.  
\end{prop}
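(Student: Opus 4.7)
The statement splits into two parts: well-definedness of $\iota^*[\overline{Y}^{\Pi}]$ at each fixed model, and compatibility of these classes as $\Pi$ varies, so that together they assemble into an element of the projective limit $D^{k,k}_{\closed,\T_K}(X)$.

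The first part is essentially immediate. Because $Y$ is $\T_K$-invariant of codimension $k$, its scheme-theoretic closure $\overline{Y}^{\Pi}$ in $\X_{\Pi}$ is a $\T_S$-invariant horizontal cycle of $S$-absolute dimension $n+1-k$. It therefore defines a class in $\CH^{\T_S}_{n+1-k}(\X_{\Pi}/S)$, which via Poincar\'e duality (Theorem~\ref{th:dual2}) yields $[\overline{Y}^{\Pi}] \in \CH^k_{\T_S}(\X_{\Pi}/S)$; pulling back along the closed immersion $\iota\colon \X_{\Pi,s}\hookrightarrow \X_{\Pi}$ then produces a well-defined class $\iota^*[\overline{Y}^{\Pi}] \in \CH^k_{\T_S}(\X_{\Pi,s}/S)$.

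For the compatibility, fix $\Pi' \geq \Pi$ in $R(\Sigma)$ with associated proper birational morphism of toric models $\pi \colon \X_{\Pi'} \to \X_{\Pi}$, and form the Cartesian square whose other side consists of $\iota' \colon \X_{\Pi',s} \hookrightarrow \X_{\Pi'}$ and $\pi_s \colon \X_{\Pi',s} \to \X_{\Pi,s}$. What must be shown is the identity ${\pi_s}_!\, \iota'^*[\overline{Y}^{\Pi'}] = \iota^*[\overline{Y}^{\Pi}]$. I would prove this in two steps. First, since $\pi$ restricts to the identity on the generic fiber by the very definition of a morphism of toric models, the restriction of $\pi$ to $\overline{Y}^{\Pi'}$ is proper and birational onto $\overline{Y}^{\Pi}$, so $\pi_*[\overline{Y}^{\Pi'}] = [\overline{Y}^{\Pi}]$ in $\CH^k_{\T_S}(\X_{\Pi}/S)$. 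Second, one needs a base-change identity $\iota^* \pi_* = {\pi_s}_!\, \iota'^*$ associated to the Cartesian square above; combined with the previous step it yields the desired equality.

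The base-change identity is the main technical step. I would derive it from the equivariant analogue of Fulton's axiom $C_3$ recalled after Theorem~\ref{th:dual2} (see also \cite[Sections~3--4]{AP}), applied to the orientation classes $[\iota],[\iota'],[\pi],[\pi_s]$ which satisfy $\iota^*[\pi]=[\pi_s]$ and $[\pi\iota']=[\iota']\cdot[\pi]=[\pi_s]\cdot[\iota]$. As an alternative that avoids any abstract base-change machinery, one can translate the identity combinatorially: by Theorems~\ref{th:equi-pol} and~\ref{th:cohomology-special} the classes $[\overline{Y}^{\Pi}]$ and $[\overline{Y}^{\Pi'}]$ correspond to combinations of generators $\varphi_\sigma$ indexed by the horizontal cones of $c(\Pi)$ and $c(\Pi')$ supported on the closures, $\iota^*$ becomes restriction to affine piecewise polynomial functions on $\Pi$, and ${\pi_s}_!$ is the pushforward of such functions; the required equality then reduces to the push-pull formulas of Proposition~\ref{prop:pullback}(ii)--(iii) applied to the subdivision $c(\Pi')\to c(\Pi)$.
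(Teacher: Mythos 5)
Your argument is correct and is essentially the proof the paper intends: the paper defers to \cite[Proposition--Definition 1.7.1]{BGS}, whose argument is precisely your two steps, namely $\pi_*\left[\overline{Y}^{\Pi'}\right]=\left[\overline{Y}^{\Pi}\right]$ because $\pi$ restricts to the identity on the generic fiber, combined with the base-change identity $\iota^*\pi_* = {\pi_s}_!\,\iota'^*$ furnished by the orientation-class relations $[\iota']\cdot[\pi]=[\pi_s]\cdot[\iota]$ recorded after Theorem \ref{th:dual2}. The combinatorial verification via Proposition \ref{prop:pullback} that you offer as an alternative is a reasonable sanity check but not required.
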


Now, let $Z$ be an equivariant cycle in $\CH^{\T_K}_*(X)$. Recall that $Z$ is an $\on{Sym}(M)_{\Q}$-linear combination of invariant cycles $Z_i$ (see \cite[Therem 2.1]{BR}). 
\begin{Def}\label{def:delta}
Let notations be as above. The $\delta$-current $\delta_Z$ is the $\on{Sym}(M)_{\Q}$-linear extension of the delta currents of the invariant cycles $Z_i$.
\end{Def}

Let us describe $\delta_Y$ for $Y$ an invariant cycle in terms of piecewise polynomial functions.  Let $\sigma \in \Sigma(k)$ and assume that $Y = V(\sigma)$ the $k$-codimensional toric subvariety corresponding to the cone $\sigma$. Then for any $\Pi \in R(\Sigma)$ we may consider $\varphi_{\sigma, \Pi}$, the piecewise polynomial function on $c(\Pi)$ of degree $k$ associated to $\sigma$ from Definition \ref{def:generators}. Here we view $\sigma$ as a cone in $c(\Pi)$, contained in $N_{\Q}\times \{0\}$.  

It follows that, under the isomorphism in Theorem \ref{th:equi-pol}, $\varphi_{\sigma, \Pi}$ exactly corresponds to $\left[\overline{Y}^{\Pi}\right]$. Moreover, the pullback $\iota^*\left[\overline{Y}^{\Pi}\right]$ corresponds then to taking the pullback of $\varphi_{\sigma, \Pi}$ to $\Pi(v)$ for each vertex $v \in \Pi(0)$. Hence we get that
\[
\delta_Y = \left(\iota^*\left[\overline{Y}^{\Pi}\right]\right)_{\Pi \in R(\Sigma)} \in \D^{k,k}_{\on{closed}, \T_S}(X)
\]
corresponds to the tower of piecewise polynomial functions
\[
\left(\left(\varphi_{\sigma,\Pi}|_{\Pi(v)}\right)_{v \in \Pi(0)}\right)_{\Pi \in R(\Sigma)}
\]
compatible under the pushforward map from Section \ref{subsec:pull}.

As we have seen, this tuple defines a function 
\begin{eqnarray}\label{eq:delta-current}
\delta_Z \colon N_{\Q} \to \Q
\end{eqnarray}
which is also denoted by $\delta_Z$.\\

\textbf{Equivariant Green currents.} Let $\eta = \sum_in_iY_i$ be a codimension $k$ invariant cycle on $X$. Given $\Pi \in R(X)$ we choose a lifting of $\eta$ to a codimension $k$ algebraic cycle $\widehat{\eta} \in \CH_{\T_S}^k\left(\X_{\Pi}\right)$. The image of $\widehat{\eta}$ under the composition 
\begin{eqnarray*}
\begin{tikzcd}
\CH_{\T_S}^k\left(\X_{\Pi}\right)  \arrow["\iota^*"]{r} &\CH_{\T_S}^k\left(\X_{\Pi,s}\right) \arrow{r}& A^{k,k}_{\closed,\T_K}(X)
\end{tikzcd}
\end{eqnarray*}
gives a class $\omega \in A^{k,k}_{\closed,\T_K}(X)$.

Given a map of toric models $\pi\colon \X_{\Pi'}\to \X_{\Pi}$ corresponding to $\Pi'\geq \Pi \in R(\Sigma)$, we let 
\[
g_{\Pi'} \coloneqq \pi^*\left[\widehat{\eta}\right]-\overline{\eta}^{\Pi'},
\]
where, as before, $\left[\overline{\eta}^{\Pi'}\right]$ denotes the class of the Zariski closure of (the irreducible components of) $\eta$ in $\X_{\Pi'}$.  As in \cite[Prop. 1.84]{BGS}, this cycle is supported on the special fiber $\X_{\Pi',s}$, hence it defines a class in $\CH_{n-k+1}^{\T_S}\left(\X_{\Pi',s}\right)$. It follows that the association $\Pi \mapsto g_{\Pi'}$ induces a class 
\[
g = \left(g_{\Pi'}\right)_{\Pi' \in R(X)} \in \widetilde{D}_{\T_K}^{k-1,k-1}(X).
\]
In terms of piecewise polynomial functions, the choice of a lifting $\widehat{\eta}$ to $\X_{\Pi}$ is equivalent to the choice of a piecewise polynomial function $f_{\widehat{\eta}}$ on $c(\Pi)$ whose restriction to $\Sigma$ is the piecewise polynomial function $f_{\eta}$ corresponding to the cycle $\eta$ on $X$. Then $\pi^*\left[\widehat{\eta}\right]$ corresponds to the pullback of $f_{\widehat{\eta}}$ to $c(\Pi')$. On the other hand, $\overline{\eta}^{\Pi'}$ also corresponds to a piecewise polynomial function on $c(\Pi')$ whose restriction to $\Sigma$ is $f_{\eta}$. Hence, the difference 
\[
g_{\X'} = \pi^*\left[\widehat{\eta}\right]-\overline{\eta}^{\Pi'},
\]
is a piecewise polynomial function on $c(\Pi')$ which vanishes when restricted to $\Sigma$. Thus it defines an element in $\CH_{n-k+1}^{\T_S}\left(\X_{\Pi',s}\right)$ and for different toric models $\Pi'' \geq \Pi' \in R(\Sigma)$ these are compatible under the pushforward map ${\pi_s}_*$.
The following proposition follows from the above discussion.

\begin{prop}\label{prop:green-fun}
Let $\eta = \sum_in_iY_i$ be an invariant cycle on $X$ of codimension $k$. Then the choice of a toric model $\Pi \in R(\Sigma)$ and a lifting $\widehat{\eta}$ in $\CH_{\T_S}^k(\X_{\Pi})$ determines a class $g \in \widetilde{D}_{\T_K}^{k-1,k-1}(X)$ such that 
\[
dd^cg = \omega - \delta_{\eta}.
\]
Moreover, since the $dd^c$ map is a map of $\on{Sym}(M)_{\Q}$, such a $g$ exists for any equivariant cycle $\eta$. 
\end{prop}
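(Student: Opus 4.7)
The plan is to construct $g$ as the explicit tower $(g_{\Pi'})_{\Pi' \geq \Pi}$ described before the statement, verify that each term lives in the equivariant Chow homology of the special fiber, check the compatibility across models, and then compute $dd^c g$ by unwinding the definition of $\iota^*\iota_*$ applied termwise.

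First I would fix $\Pi' \geq \Pi$ in $R(\Sigma)$ with corresponding map $\pi \colon \X_{\Pi'} \to \X_{\Pi}$ and set
\[
\widetilde{g}_{\Pi'} \coloneqq \pi^*[\widehat{\eta}] - \bigl[\overline{\eta}^{\Pi'}\bigr] \in \CH^k_{\T_S}(\X_{\Pi'}/S).
\]
Under Poincar\'e duality (Theorem~\ref{th:dual2}) and the identification with $PP^*(c(\Pi'))$, both summands restrict on the generic fiber $\X_{\Pi',\eta} = X$ to the class of $\eta$. Using the localization sequence from the proof of Proposition~\ref{prop:chow-hom}, the difference therefore comes from a unique class $g_{\Pi'} \in \CH^{\T_S}_{n-k+1}(\X_{\Pi',s}/S)$ under $\iota_*$. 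Combinatorially, $\widetilde{g}_{\Pi'}$ is the difference of two piecewise polynomial functions on $c(\Pi')$ that agree on the subfan $\Sigma \subseteq c(\Pi') \cap (N_{\R} \times \{0\})$, so it is supported on the cones meeting the interior $N_{\R} \times \R_{>0}$, i.e.~on the vertical part; this is exactly what it means to lie in the image of $\iota_*$.

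Next I would check compatibility: for $\Pi'' \geq \Pi'$ with map $\pi' \colon \X_{\Pi''} \to \X_{\Pi'}$, the projection formula and the fact that $\pi'_*[\overline{\eta}^{\Pi''}] = [\overline{\eta}^{\Pi'}]$ (both being the Zariski closure of the same cycle) give $\pi'_*\widetilde{g}_{\Pi''} = \widetilde{g}_{\Pi'}$. Since $\iota_*$ commutes with proper pushforward, this yields $\pi'_{s,*} g_{\Pi''} = g_{\Pi'}$, so the family $(g_{\Pi'})_{\Pi' \geq \Pi}$ defines an element $g \in \widetilde{D}^{k-1,k-1}_{\T_K}(X)$ (extending by $0$ or rather by compatibility to arbitrary $\Pi' \in R(\Sigma)$ after first dominating $\Pi$).

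Finally, to compute $dd^c g$, I would apply the description from Section~\ref{sec:forms-currents}: on the model $\X_{\Pi'}$ the operator $dd^c$ is represented by $\iota^*\iota_*$ on $\CH^{\T_S}_{n-k+1}(\X_{\Pi',s}/S)$. By construction $\iota_* g_{\Pi'} = \widetilde{g}_{\Pi'} = \pi^*[\widehat{\eta}] - [\overline{\eta}^{\Pi'}]$, so
\[
\iota^*\iota_* g_{\Pi'} = \iota^*\pi^*[\widehat{\eta}] - \iota^*[\overline{\eta}^{\Pi'}].
\]
Passing to the direct/projective limit the first summand gives, by definition, the class $\omega \in A^{k,k}_{\closed,\T_K}(X)$ associated with the chosen lift $\widehat{\eta}$, while the second summand gives $\delta_\eta \in D^{k,k}_{\closed,\T_K}(X)$ by the description preceding~\eqref{eq:delta-current}. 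Hence $dd^c g = \omega - \delta_\eta$, as required. The main obstacle is the careful bookkeeping in the first step: showing that $\widetilde{g}_{\Pi'}$ genuinely lifts along $\iota_*$ and that this lift is compatible under the pushforward maps $\pi'_{s,*}$ (equivalently, under the combinatorial map $\alpha$ from Section~\ref{sec:forms-currents}); everything else is a direct consequence of the functorial properties of $\iota^*$, $\iota_*$ and the projection formula.
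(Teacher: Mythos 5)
Your proposal matches the paper's own argument: the paper constructs exactly the tower $g_{\Pi'}=\pi^*[\widehat{\eta}]-[\overline{\eta}^{\Pi'}]$, observes it is supported on the special fiber (combinatorially, a piecewise polynomial function on $c(\Pi')$ vanishing on $\Sigma$), checks compatibility under ${\pi_s}_*$, and reads off $dd^cg=\omega-\delta_{\eta}$ from the termwise description of $\iota^*\iota_*$, so your route is essentially identical. The only caveat is your appeal to the localization sequence to get a \emph{unique} preimage under $\iota_*$ — that sequence gives existence but not uniqueness; one should instead take the cycle-level (or piecewise-polynomial-level) difference, which is literally supported on the vertical part and hence yields a canonical class in $\CH^{\T_S}_{n-k+1}(\X_{\Pi',s}/S)$.
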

This $g$ is then is an example of a so called \emph{equivariant Green current} for $\eta$.

\begin{Def}
Let $Z \in \CH_{n-k}^{\T_K}(X)$ be an equivariant cycle. Any element $g \in \widetilde{D}_{\T_K}^{k-1,k-1}(X)$ such that 
\[
dd^cg+\delta_{Z} \in A^{k,k}_{\closed, \T_K}(X)
\]
 is called an \emph{equivariant Green current for} $Z$.
\end{Def}
In terms of piecewise polynomial functions, an element $g \in \widetilde{D}_{\T_K}^{k-1,k-1}(X)$ is a Green current for $Z$ if and only if the sum 
\[
dd^cg+\delta_{Z}
\]
is an affine piecewise polynomial function on $N_{\R}$ with respect to some $\Pi \in R(\Sigma)$.  

\begin{exa}
Let $L_{\psi}$ be a toric Cartier divisor on $X$ associated to a virtual support function $\psi$ on $\Sigma$ and let $s$ be a toric section of $L_{\psi}$. By Proposition \ref{prop:green-fun} we have that the analogue of a toric metric on $L$ is the choice of an element $\mathcal{L}$ in $ \varinjlim_{\Pi\in R(\Sigma)}\CH^1_{\T_S}(\X_{\Pi})$ extending $L$. In other words, the choice of a model $\Pi \in R(\Sigma)$ and a piecewise linear function $\widetilde{\psi}$ on $c(\Pi)$ whose restriction to $\Sigma$ is $\psi$. 

Given such a toric metric $\| \cdot \|$, the analogue of the $L^1$-function $\log\|s\|$ is the Green's function $g \in \widetilde{D}^{0,0}_{\T_K}(X)$ defined for $\pi \colon \X_{\Pi'} \to \X_{\Pi}$ by $g_{\Pi'} = \on{div}\left(\tilde{s}'\right) - \overline{\on{div}(s)}$, where $\left(\tilde{s}'\right)$ is the extension of $s$ to a toric section of $\pi^*\mathcal{L}$ and $\overline{\on{div}(s)}^{\Pi'}$ is the Zariski closure of $\on{div}(s)$ on $\X_{\Pi'}$.  Note that $\on{div}\left(\tilde{s}'\right)$ is a piecewise polynomial function on $c(\Pi')$ but $\overline{\on{div}(s)}^{\Pi'}$ not necessarily. They differ on $\Pi(v)$ for $v \in \Pi'(0)\setminus \Pi(0)$. In this way, $g$ defines a function $N_{\Q} \to \Q$ which is exactly the function associated to $dd^cg \in D^{1,1}_{\closed, \T_S}$ (see Example \ref{exa:green-current}). 
\end{exa}

\section{Equivariant non-arquimedean arithmetic Chow groups of toric varieties}\label{sec:equi-arith-chow}
Let $X = X_{\Sigma}$ be a smooth projective toric variety over the discretely valued field $K$. 
The goal of this section is to define equivariant analogues of the non-arquimedean arithmetic Chow groups $\widehat{\CH}(X)$ and $\widecheck{\CH}(X)$ defined in \cite{BGS} and \cite{GS-direct}, respectively, and to provide combinatorial descriptions of these groups.
\subsection{The equivariant arithmetic Chow group $\widehat{\CH}^*_{\T_K}(X)$}\label{sec:arith-chow1}

Let $W \subseteq X$ be an invariant subvariety of codimension $(k-1)$ and let $f \in K(W)^*$ be a non-zero, invariant rational function on $W$. Then $\on{div}(f)$ on $W$ is an invariant cycle on $X$ of codimension $k$. 

For $\Pi$ in $R(\Sigma)$ we denote by $\on{div}_{\Pi}(f)$ the divisor of $f$ on the Zariski closure $\overline{W}^{\Pi}$ of $W$ on $\X_{\Pi}$. On the other hand, we write $\overline{\on{div}(f)}^{\Pi}$ for the Zariski closure of $\on{div}(f)$ on $\X_{\Pi}$.  Define  
\[
\on{div}_{\nu}(f)_{\Pi} \coloneqq \overline{\on{div}(f)}^{\Pi} - \on{div}_{\Pi}(f).
\]
This is a codimension $(k-1)$ cycle supported on the special fiber $\X_{\Pi,s}$. Given a map of toric models $\pi\colon \X_{\Pi'} \to \X_{\Pi}$ associated to $\Pi' \geq \Pi$ in $R(\Sigma)$, one has 
\[
{\pi_s}_*\left( \on{div}_{\nu}(f)_{\Pi'}\right) = \on{div}_{\nu}(f)_{\Pi}.
\]
Hence, we get a well-defined element 
\[
\on{div}_{\nu}(f) = \left(\on{div}_{\nu}(f)_{\Pi}\right)_{\Pi \in R(\Sigma)} \in \widetilde{D}^{k,k}_{\T_K}(X).
\]

We then have the following equivariant version of the \emph{Poincaré--Lelong formula}. 
\begin{prop}\label{prop:p-l-formula}
Let notations be as above and let $\chi \in M$ be the weight of $f$, i.e. $\chi \in M$ and \[
g \cdot f = \chi(g) f \quad \forall g \in \T_K.
\]
Then
\[
dd^c\left(-\on{div}_{\nu}(f)\right) = \delta_{\chi[W]-\on{div}(f)},
\]
where $\chi[W]$ denotes the action of $M$ on $\CH_*^{\T_K}(X)$ by homogeneous maps of degree $-1$. 
\end{prop}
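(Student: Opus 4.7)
The plan is to verify the formula on each toric model separately. Since $D^{k,k}_{\closed,\T_K}(X) = \varprojlim_{\Pi \in R(\Sigma)} \CH^k_{\T_S}(\X_{\Pi,s}/S)$, it suffices to show that for every $\Pi \in R(\Sigma)$ the $\Pi$-components of $dd^c(-\on{div}_\nu(f))$ and of $\delta_{\chi[W]-\on{div}(f)}$ agree. Fix $\Pi$, set $\X = \X_\Pi$, and let $\iota \colon \X_s \hookrightarrow \X$ denote the inclusion of the special fiber. By the construction of the $dd^c$ operator as $\iota^*\iota_*$ in Section \ref{sec:iota}, it is enough to compute $\iota^*\iota_*(-\on{div}_\nu(f)_\Pi)$ in $\CH^k_{\T_S}(\X_s/S)$.

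First I would rewrite
\[
-\on{div}_\nu(f)_\Pi = \on{div}_\Pi(f) - \overline{\on{div}(f)}^{\Pi},
\]
which is genuinely supported on the special fiber, because the horizontal part of the principal divisor $\on{div}_\Pi(f)$ on $\overline{W}^\Pi$ equals $\overline{\on{div}(f)}^{\Pi}$. Pushing forward via $\iota_*$ to $\X$ and applying the Poincaré duality isomorphism $\CH^{\T_S}_{n-k+1}(\X/S) \simeq \CH^k_{\T_S}(\X/S)$ of Theorem \ref{th:dual2}, we reduce to computing $\iota^*\bigl([\on{div}_\Pi(f)] - [\overline{\on{div}(f)}^\Pi]\bigr)$.

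The key step is then to invoke the equivariant rational equivalence relation of Remark \ref{rem:rel-inv}. Applied on the invariant subvariety $\overline{W}^\Pi$ with $f$ an eigenfunction of weight $\chi \in \widetilde{M}$, it gives $[\on{div}_\Pi(f)] = \chi \cdot [\overline{W}^\Pi]$ in $\CH^{\T_S}_*(\X/S)$, where $\chi$ acts through the $\widetilde{S}$-module structure described in Section \ref{sec:eq-chow}. Together with the defining property $\iota^*[\overline{Y}^\Pi] = \delta_Y|_\Pi$ of the $\delta$-currents and the compatibility of $\iota^*$ with the $\widetilde{S}$-action, this produces
\[
\iota^*\iota_*(-\on{div}_\nu(f)_\Pi) = \chi \cdot \delta_W|_\Pi - \delta_{\on{div}(f)}|_\Pi = \delta_{\chi[W] - \on{div}(f)}|_\Pi,
\]
which is precisely the $\Pi$-component of the right-hand side of the statement.

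What remains is a compatibility check: the identifications above must be compatible with the transition maps ${\pi_s}_!$ of the inverse system as $\Pi$ varies in $R(\Sigma)$, so that equality on each model upgrades to equality in $D^{k,k}_{\closed,\T_K}(X)$. This should follow from functoriality of proper pushforward and flat pullback, the projection formula, and the construction of $\delta_Y$ as a compatible family, all recorded in Section \ref{sec:eq-chow}. The main obstacle I anticipate is precisely verifying that the $\widetilde{S}$-action at the level of the inverse system commutes with $\iota^*$ and with ${\pi_s}_!$, so that the identity $\chi \cdot \delta_Y = \delta_{\chi[Y]}$ holds coherently across all models; once this functoriality is in place, the remaining argument is essentially bookkeeping.
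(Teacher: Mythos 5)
Your proposal follows essentially the same route as the paper's proof: reduce to a fixed model $\Pi$, rewrite $-\on{div}_{\nu}(f)_{\Pi}$ as $\on{div}_{\Pi}(f)-\overline{\on{div}(f)}^{\Pi}$ supported on the special fiber, and kill the difference with the right-hand side using the equivariant rational-equivalence relation $\on{div}_{\Pi}(f)=\widetilde{\chi}\,[\overline{W}^{\Pi}]$ of Remark \ref{rem:rel-inv}. The only point to tighten is the distinction between the weight $\chi\in M$ of $f$ on $W$ and the weight $\widetilde{\chi}\in\widetilde{M}$ of $f$ on $\overline{W}^{\Pi}$ (the paper keeps these separate and only identifies them after applying $\iota^*$); otherwise the argument, including the final model-compatibility check, matches the paper's.
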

\begin{proof}
It suffices to show that for a fixed toric model $\Pi \in R(\Sigma)$ we have that 
\[
\iota^*\iota_*\left(\on{div}_{\Pi}(f)-\overline{\on{div}(f)}^{\Pi}\right) = \iota^*\left(\overline{\chi[W]}^{\Pi}-\overline{\on{div}(f)}^{\Pi}\right),
\]
where $\iota \colon \X_{\Pi,s}\hookrightarrow \X_{\Pi}$ denotes the inclusion of the special fiber. But the difference between the two sides of this equation is 
\[
\iota^*\left(\on{div}_{\Pi}(f)-\overline{\chi[W]}^{\Pi}\right) = \iota^*\left(\on{div}_{\Pi}(f)-\widetilde{\chi}\overline{[W]}^{\Pi}\right),
\]
where $\widetilde{\chi} \in \widetilde{M}$ is the weight of $f$ on $\overline{W}^{\Pi}$. 
And this is zero because $\on{div}_{\Pi}(f)-\widetilde{\chi}\overline{[W]}^{\Pi}$ is zero in $\CH_{\T_S}^k(\X_{\Pi})$ (see Proposition \ref{prop:rel-inv}). 
\end{proof}
We are now ready to define the equivariant arithmetic Chow groups $\widehat{\CH}^k_{\T_K}(X)$.  We write $Z^k_{\T_K}(X)$ for the free $\on{Sym}(M)_{\Q}$-module generated by invariant cycles of codimension $k$ on $X$.

\begin{Def}\label{def:arith-chow1}
We set
\[
\widehat{Z}^k_{\T_K}(X) \coloneqq \left\{\eta,g)\; | \; \eta \in Z^k_{\T_K}(X), g \text{ a Green current for }\eta\right\}
\]
be the group of \emph{equivariant arithmetic cycles} and let $\widehat{R}^k_{\T_K}(X) \subseteq \widehat{Z}^k_{\T_K}(X)$ be the subgroup generated by all arithmetic cycles of the form 
\[
\widehat{\on{div}(f)} \coloneqq \left(\chi[W]-\on{div}(f), -\on{div}_{\nu}(f)\right),
\]
where $f \in K(W)^*$ is any non-zero invariant rational function on an invariant subvariety $W$ of codimension $k-1$ of weight $\chi$. Recall that this means that $f$ is an eigenvector of $\T_K$ of weight $\chi \in M$, i.e. such that 
\[
g \cdot f = \chi(g) f \quad \forall g \in \T_K.
\]

 Then the \emph{equivariant arithmetic Chow group of $X$ of degree $k$} is defined by 
\[
\widehat{\CH}^k_{\T_K}(X) \coloneqq \widehat{Z}^k_{\T_K}(X) / \widehat{R}^k_{\T_K}(X).
\]
We also set 
\[
\widehat{\CH}^*_{\T_K}(X) \coloneqq \bigoplus_{k \in \Z} \widehat{\CH}^k_{\T_K}(X).
\]

\end{Def}
\begin{rem}
One has maps 
\begin{enumerate}
\item[i)] 
\[
\zeta \colon \widehat{\CH}^k_{\T_K}(X) \longrightarrow \CH^k_{\T_K}(X), \; (\eta,g) \longmapsto \eta,
\]
\item[ii)]
\[
a\colon \widetilde{A}^{k-1,k-1}_{\T_K}(X) \longrightarrow \widehat{\CH}^k_{\T_K}(X), \; \alpha \longmapsto (0,\alpha),
\]
\item[iii)]
\[
\omega \colon  \widehat{\CH}^k_{\T_K}(X) \longrightarrow A^{k,k}_{\closed,\T_K}(X), \; (\eta,g) \longmapsto \delta_{\eta} + dd^cg.
\]
\end{enumerate}
The map $\zeta$ is well-defined since $\zeta\left(\widehat{\div(f)}\right) = \chi[W]-\on{div}(f)$, which is zero in $\CH^k_{\T_K}(X)$. The map $\omega$ is well-defined by the Poincaré-Lelong formula in Proposition \ref{prop:p-l-formula}.
\end{rem}
\begin{prop}
The maps above fit into an exact sequence 
\begin{eqnarray*}
\begin{tikzcd}
\widetilde{D}_{\T_K}^{k-1,k-1}(X)  \arrow["a"]{r} &\widehat{\CH}^k_{\T_K}(X)\arrow["\zeta"]{r}& \CH^k_{\T_K}(X) \arrow{r} &0
\end{tikzcd}
\end{eqnarray*}
\end{prop}
\begin{proof}
Surjectivity of $\zeta$ is equivalent to the statement that any equivariant cycle has a Green current, which follows from Proposition \ref{prop:green-fun}. Relations $\chi[W]-\div(f)$ lift to relations $\widehat{\div(f)} = \left(\chi[W]-\div(f), -\div_{\nu}(f)\right)$ in $\widehat{CH}^*_{\T_K}(X)$, so $\on{Im}(a) = \on{Ker}(\zeta)$. 
\end{proof}
\textbf{$\widehat{\CH}^*_{\T_K}(X)$ as a direct limit.}
Let $\eta \in Z_{\T_K}^k(X)$ be an invariant cycle of codimension $k$. Recall from Proposition~\ref{prop:green-fun} that a choice of a toric model $\Pi \in R(\Sigma)$ together with a lifting $\widehat{\eta} \in Z_{\T_S}^k(\X_{\Pi})$ of $\eta$ determines a Green current $g_{\widehat{\eta}}$ for $\eta$. Here, $Z^k_{\T_S}(\X_{\Pi})$ denotes the set of invariant cycles of $\X_{\Pi}$ of absolute $S$-codimension $k$. 

Thus, for any $\Pi \in R(\Sigma)$ we have a commutative diagram

%\begin{figure}[H]
\begin{center}
    \begin{tikzpicture}
      \matrix[dmatrix] (m)
    { Z^k_{\T_S}(\X_{\Pi}) & & \widehat{Z}^k_{\T_K}(X) \\
      & Z_{\T_K}^k(X) &\\
      };
      \draw[->] (m-1-1) to (m-2-2);
    \draw[->] (m-1-1) to node[above]{$\Theta_{\Pi}$}(m-1-3);
     \draw[->] (m-1-3) to (m-2-2);

     \end{tikzpicture}
     \end{center}
    % \end{figure} 
where $\Theta_{\Pi}\left(\widehat{\eta}\right) = \left(\widehat{\eta}|_X, g_{\widehat{\eta}}\right)$, the left arrow is given by $\widehat{\eta} \mapsto \widehat{\eta}|_X$ and the right arrow by $\left(\eta,g_{\eta}\right) \mapsto \eta$. 
The following Lemma follows similar to \cite[Lemma~3.3.1]{BGS}. 
\begin{lemma} Let $\Pi \in R(\Sigma)$ and let $\widetilde{W} \subseteq \X_{\Pi}$ be an invariant integral subscheme of codimension $k-1$. Further, let $f \in k(\widetilde{W})^*$ be an invariant rational function on $\widetilde{W}$ of weight $\widetilde{\chi} \in \widetilde{M}$. Then 
\[
\Theta_{\Pi}\left(\widetilde{\chi}\left[\widetilde{W}\right]-\on{div}_{\X_{\Pi}}(f)\right) \in \widehat{R}^k_{\T_K}(X),
\]
where, as before, $\on{div}_{\X_{\Pi}}(f)$ denotes the divisor of $f$ on $\X$. 
\end{lemma}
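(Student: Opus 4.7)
The proof proceeds by distinguishing whether the invariant integral subscheme $\widetilde{W}\subseteq \X_\Pi$ is horizontal (i.e.\ dominates $S$) or vertical (i.e.\ contained in $\X_{\Pi,s}$). The uniform input in both cases is that the cycle $\widehat{\eta}:=\widetilde{\chi}[\widetilde{W}]-\on{div}_{\X_\Pi}(f)$ vanishes as a class in $\CH^k_{\T_S}(\X_\Pi/S)$: this is precisely the equivariant eigenfunction relation recalled in Remark~\ref{rem:rel-inv}. Consequently, for every $\Pi'\geq\Pi$ in $R(\Sigma)$ with induced map $\pi\colon\X_{\Pi'}\to\X_\Pi$, the pullback $\pi^*[\widehat{\eta}]$ vanishes in $\CH^k_{\T_S}(\X_{\Pi'}/S)$, so the Green current component $g_{\widehat{\eta}}$ entering the construction preceding Proposition~\ref{prop:green-fun} reduces to the simple formula $g_{\Pi'}=-\overline{\eta}^{\Pi'}$, where $\eta:=\widehat{\eta}|_X$.

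In the horizontal case, $\widetilde{W}=\overline{W}^\Pi$ for the invariant subvariety $W:=\widetilde{W}|_X\subseteq X$ of codimension $k-1$. Since $\widetilde{W}$ and $W$ share the same function field, $f$ restricts to a nonzero invariant rational function $f|_W\in K(W)^*$ of some $\T_K$-weight $\chi\in M$, after absorbing a possible power of the uniformizer into $\widetilde{\chi}$ (whose contribution to $\on{div}_{\X_\Pi}(f)$ is purely vertical). Restricting to the generic fiber then yields $\eta=\chi[W]-\on{div}(f|_W)$, which is the first coordinate of $\widehat{\on{div}(f|_W)}$. Applying the same eigenfunction relation on $\overline{W}^{\Pi'}$ gives $\chi\,\overline{[W]}^{\Pi'}=\on{div}_{\Pi'}(f|_W)$ in $\CH^k_{\T_S}(\X_{\Pi'}/S)$; substituting into $g_{\Pi'}=-\overline{\eta}^{\Pi'}$ produces
\[
g_{\Pi'}=-\chi\,\overline{[W]}^{\Pi'}+\overline{\on{div}(f|_W)}^{\Pi'}=\overline{\on{div}(f|_W)}^{\Pi'}-\on{div}_{\Pi'}(f|_W),
\]
which agrees up to sign with $\on{div}_\nu(f|_W)_{\Pi'}$. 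Assembling over $\Pi'\in R(\Sigma)$ identifies $\Theta_\Pi(\widehat{\eta})$ (possibly up to a global sign) with $\widehat{\on{div}(f|_W)}$, hence with an element of $\widehat{R}^k_{\T_K}(X)$.

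In the vertical case both summands of $\widehat{\eta}$ are supported on $\X_{\Pi,s}$, so $\eta=0$ and $\overline{\eta}^{\Pi'}=0$ for every $\Pi'$; combined with the vanishing of $[\widehat{\eta}]$, this forces $g_{\Pi'}=0$ for all $\Pi'$, and therefore $\Theta_\Pi(\widehat{\eta})=(0,0)\in\widehat{R}^k_{\T_K}(X)$ trivially. The principal technical obstacle is the Green-current identification in the horizontal case: the formula $g_{\Pi'}=\pi^*[\widehat{\eta}]-\overline{\eta}^{\Pi'}$ only pins down $g_{\widehat{\eta}}$ modulo the image of the restriction $\CH^k_{\T_S}(\X_{\Pi',s}/S)\to\CH^k_{\T_S}(\X_{\Pi'}/S)$, so one must lift through the localization exact sequence of Proposition~\ref{prop:exact-seq} and verify compatibility under the pushforwards ${\pi_s}_*$ to produce a bona fide element of $\widetilde{D}^{k-1,k-1}_{\T_K}(X)$, all while carefully tracking the equivariant sign and orientation conventions.
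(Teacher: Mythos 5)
The paper itself offers no proof of this lemma beyond the remark that it ``follows similar to'' \cite[Lemma~3.3.1]{BGS}, so there is no written argument to compare routes with; judging your attempt on its own terms, the overall architecture (horizontal/vertical case split plus the eigenfunction relations of Remark~\ref{rem:rel-inv}) is the right skeleton. The vertical case is essentially correct, but for a slightly different reason than the one you give: what makes $g_{\Pi'}$ vanish is not the vanishing of $[\widehat{\eta}]$ in $\CH^k_{\T_S}(\X_{\Pi}/S)$ but the fact that, since $\widetilde{W}\subseteq\X_{\Pi,s}$, the rational equivalence $\widetilde{\chi}[\widetilde{W}]\sim\on{div}_{\X_\Pi}(f)$ takes place entirely \emph{inside the special fiber}, so $\widehat{\eta}$ is already zero as a class in $\CH^{\T_S}_{n-k+1}(\X_{\Pi,s}/S)$ and hence so are its pullbacks.

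The horizontal case has a genuine gap, and it is precisely the one you name in your closing paragraph without closing it. The component $g_{\Pi'}=\pi^*\widehat{\eta}-\overline{\eta}^{\Pi'}$ is by construction a class in $\CH^{\T_S}_{n-k+1}(\X_{\Pi',s}/S)$: only the \emph{difference} of the two cycles is supported on the special fiber, so you cannot use $\pi^*[\widehat{\eta}]=0$ to write $g_{\Pi'}=-\overline{\eta}^{\Pi'}$ (the right-hand side is a horizontal cycle and defines no class on $\X_{\Pi',s}$), and the subsequent substitution $\chi\,\overline{[W]}^{\Pi'}=\on{div}_{\Pi'}(f|_W)$ is a rational equivalence on the ambient scheme $\X_{\Pi'}$, not on $\X_{\Pi',s}$. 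Since $\iota_*\colon\CH^{\T_S}_{n-k+1}(\X_{\Pi',s}/S)\to\CH^{\T_S}_{n-k+1}(\X_{\Pi'}/S)$ is far from injective --- its kernel is exactly what the groups $\widetilde{D}^{*,*}_{\T_K}(X)$ and the map $\iota^*\iota_*$ are built to detect --- agreement of $g_{\Pi'}$ with $-\on{div}_{\nu}(f|_W)_{\Pi'}$ after pushing forward to $\X_{\Pi'}$ does not yield the required identity in $\CH^{\T_S}_{n-k+1}(\X_{\Pi',s}/S)$, which is the actual content of the lemma. What is missing is a cycle-level (equivalently, piecewise-polynomial on $c(\Pi')$) verification that $\pi^*\widehat{\eta}-\overline{\eta}^{\Pi'}$ and $\on{div}_{\Pi'}(f|_W)-\overline{\on{div}(f|_W)}^{\Pi'}$ differ by something rationally equivalent to zero \emph{on the special fiber itself}; flagging this as ``the principal technical obstacle'' does not discharge it.
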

It follows from the above lemma that $\Theta_{\Pi}$ induces a map 
\[
\Theta_{\Pi} \colon \CH_{\T_S}^k(\X_{\Pi}) \longrightarrow \widehat{\CH}^k_{\T_K}(X).
\]
Moreover, let $\Pi'\geq \Pi$ in $R(\Sigma)$ correspond to a map of toric models $\pi \colon \X_{\Pi'}\to \X_{\Pi}$. Then there is a commutative diagram 
%\begin{figure}[H]
\begin{center}
    \begin{tikzpicture}
      \matrix[dmatrix] (m)
    { \CH^k_{\T_S}(\X_{\Pi}) & & \CH^k_{\T_S}(\X_{\Pi'})  \\
      & \widehat{\CH}^k_{\T_K}(X) &\\
      };
      \draw[->] (m-1-1) to node[left]{$\Theta_{\Pi}$}(m-2-2);
    \draw[->] (m-1-1) to node[above]{$\pi^*$}(m-1-3);
     \draw[->] (m-1-3) to node[right]{$\Theta_{\Pi'}$}(m-2-2);

     \end{tikzpicture}
     \end{center}
   %  \end{figure} 
   Hence we get a map on the direct limit
   \[
\Theta \colon \varinjlim_{\Pi \in R(\Sigma)}\CH^k_{\T_S}\left(\X_{\Pi}\right) \longrightarrow \widehat{\CH}^k_{\T_K}(X).
\]
\begin{theorem}\label{th:direct}
The map $\Theta$ defined above
is an isomorphism.  
\end{theorem}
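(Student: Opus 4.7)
The plan is to adapt the strategy of \cite[Theorem~3.3.4]{BGS} to the equivariant toric setting, using Proposition~\ref{prop:green-fun} (production of Green currents from lifts) and the regularity Proposition~\ref{prop:regularity} as the key inputs. The compatibility of $\Theta_{\Pi}$ with pullbacks under refinements of models has already been checked above, so what remains is surjectivity and injectivity of $\Theta$.

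For surjectivity, given an equivariant arithmetic cycle $(\eta, g) \in \widehat{Z}^k_{\T_K}(X)$, first choose an arbitrary initial model $\Pi_0 \in R(\Sigma)$ and an invariant lift $\widehat{\eta}_0 \in Z^k_{\T_S}(\X_{\Pi_0}/S)$ of $\eta$; the Zariski closure of $\eta$ on $\X_{\Pi_0}$ works. Proposition~\ref{prop:green-fun} attaches to this lift a Green current $g_{\widehat{\eta}_0}$ for $\eta$, and the difference $h \coloneqq g - g_{\widehat{\eta}_0} \in \widetilde{D}^{k-1,k-1}_{\T_K}(X)$ satisfies $dd^c h \in A^{k,k}_{\closed,\T_K}(X)$ because both $g$ and $g_{\widehat{\eta}_0}$ are Green currents for the same cycle. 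The regularity Proposition~\ref{prop:regularity} then upgrades $h$ to a class in $\widetilde{A}^{k-1,k-1}_{\T_K}(X)$, which means that on some refinement $\Pi \geq \Pi_0$ it is represented by an element $\alpha \in \CH^{\T_S}_{n-k+1}(\X_{\Pi,s}/S)$. Setting $\widehat{\eta} \coloneqq \pi^*\widehat{\eta}_0 + \iota_*\alpha \in \CH^k_{\T_S}(\X_\Pi/S)$ with $\pi \colon \X_\Pi \to \X_{\Pi_0}$ the induced morphism of toric models, the restriction to the generic fiber remains $\eta$ (because $\iota_*\alpha$ is supported on $\X_{\Pi,s}$), while the Green current attached to $\widehat{\eta}$ becomes $g_{\pi^*\widehat{\eta}_0} + h = g_{\widehat{\eta}_0} + h = g$. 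Hence $\Theta_\Pi(\widehat{\eta}) = (\eta, g)$ in $\widehat{\CH}^k_{\T_K}(X)$.

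For injectivity, suppose a class $\widehat{\eta} \in \CH^k_{\T_S}(\X_\Pi/S)$ satisfies $\Theta_\Pi(\widehat{\eta}) = 0$, which means that $(\widehat{\eta}|_X, g_{\widehat{\eta}})$ is a finite sum $\sum_i (\chi_i[W_i] - \on{div}(f_i), -\on{div}_\nu(f_i))$ in $\widehat{Z}^k_{\T_K}(X)$. After passing to a sufficiently fine refinement $\Pi' \geq \Pi$ with map $\pi \colon \X_{\Pi'} \to \X_{\Pi}$, each Zariski closure $\widetilde{W}_i \subseteq \X_{\Pi'}$ is invariant and carries an extension of $f_i$ to an invariant rational function of some weight $\widetilde{\chi}_i \in \widetilde{M}$ projecting to $\chi_i$. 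The lemma preceding the theorem then shows that the arithmetic cycle $\sum_i (\widetilde{\chi}_i[\widetilde{W}_i] - \on{div}_{\X_{\Pi'}}(f_i))$ has the same image under $\Theta_{\Pi'}$ as $\pi^*\widehat{\eta}$ on the level of arithmetic-cycle pairs. The difference lives in $\CH^k_{\T_S}(\X_{\Pi'}/S)$ with trivial restriction to $X$ and trivial Green current, so is represented by a vertical class $\iota_*\beta$ with $\beta \in \CH^{\T_S}_{n-k+1}(\X_{\Pi',s}/S)$ whose image in $\widetilde{A}^{k-1,k-1}_{\T_K}(X)$ vanishes; combined with Remark~\ref{rem:rel-inv}, which kills each $\widetilde{\chi}_i[\widetilde{W}_i] - \on{div}_{\X_{\Pi'}}(f_i)$ in the equivariant Chow group, one concludes that $\pi^*\widehat{\eta} = 0$ in $\CH^k_{\T_S}(\X_{\Pi''}/S)$ for some $\Pi'' \geq \Pi'$, hence that $\widehat{\eta}$ vanishes in the direct limit.

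The main obstacle is the last step of the surjectivity argument: confirming that the Green current of $\pi^*\widehat{\eta}_0 + \iota_*\alpha$ is genuinely $g_{\widehat{\eta}_0} + h$, which requires identifying the image of $\iota_*\alpha$ under $\Theta$ with the canonical injection $\widetilde{A}^{k-1,k-1}_{\T_K}(X) \hookrightarrow \widetilde{D}^{k-1,k-1}_{\T_K}(X)$ applied to the class of $\alpha$. The analogous point on the injectivity side is the identification of a vertical class in $\CH^k_{\T_S}(\X_{\Pi'}/S)$ with zero Green current as a class that dies on a further refinement. Both verifications reduce, via the combinatorial description in Section~\ref{sec:forms-currents} and Lemma~\ref{lem:compo}, to a manipulation of affine piecewise polynomial functions using the explicit formulas worked out there for $\pi_s^*$, $\pi_s^!$, ${\pi_s}_*$ and ${\pi_s}_!$.
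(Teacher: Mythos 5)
Your argument is correct in outline, but it follows a genuinely different route from the paper. You transplant the Bloch--Gillet--Soul\'e strategy: surjectivity by correcting an arbitrary lift $\widehat{\eta}_0$ with a vertical class $\iota_*\alpha$ produced from the regularity statement (Proposition~\ref{prop:regularity}), and injectivity by lifting the generators of $\widehat{R}^k_{\T_K}(X)$ to rational equivalences on a sufficiently fine model and invoking Remark~\ref{rem:rel-inv}. The paper instead exploits the combinatorics throughout: it first identifies $\varinjlim_{\Pi}\CH^k_{\T_S}(\X_{\Pi}/S)$ with $PP^k_{\Sigma}(N_{\R}\oplus\R_{\geq 0})$, rewrites $\Theta$ as $f\mapsto (f|_{N_{\R}\times\{0\}},g_f)$, and then exhibits an explicit inverse sending $(\eta,g_{\eta})$ to the piecewise polynomial function of $\overline{\eta}^{\Pi}$ on a model $\Pi$ where $\delta_{\eta}+dd^cg_{\eta}$ becomes affine piecewise polynomial. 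Your approach is more structural -- it isolates exactly which general inputs drive the result (regularity of $dd^c$, the Poincar\'e--Lelong lemma, and the compatibility of $\Theta$ on vertical classes with $a\circ\widetilde{\iota}$) and would work verbatim in the non-equivariant setting of \cite{BGS}; the paper's route buys the explicit combinatorial description $\widehat{\CH}^k_{\T_K}(X)\simeq PP^k_{\Sigma}(N_{\R}\oplus\R_{\geq 0})$ stated in the introduction, which is really the point of the theorem, and makes the independence of all choices transparent. Be aware that the verification you defer at the end -- that the Green current attached to $\pi^*\widehat{\eta}_0+\iota_*\alpha$ is $g_{\widehat{\eta}_0}+\widetilde{\iota}(\alpha)$, equivalently that $\Theta_{\Pi}\circ\iota_*$ agrees with $a\circ\widetilde{\iota}$ -- is the genuine technical heart of your version and must be carried out at the level of cycles (so that ``supported on the special fiber'' yields a well-defined class in $\CH^{\T_S}_{n-k+1}(\X_{\Pi'',s}/S)$); it does go through using the explicit formulas for ${\pi_s}^!$ and $\pi^*$ in Section~\ref{sec:forms-currents}, and it is precisely the computation that the paper's explicit inverse packages away.
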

\begin{proof}
We have
\[
\varinjlim_{\Pi \in R(\Sigma)}\CH^k_{\T_S}\left(\X_{\Pi}\right) \simeq \varinjlim_{\Pi \in R(\Sigma)}PP^k\left(c(\Pi)\right) = PP^k_{\Sigma}\left(N_{\R} \oplus \R_{\geq 0}\right),
\]
where $PP^k_{\Sigma}\left(N_{\R} \oplus \R_{\geq 0}\right)$ denotes the set of piecewise polynomial functions $f$ on $N_{\R} \oplus \R_{\geq 0}$ of degree $k$ such that there exists $\Pi \in R(\Sigma)$ with $f \in PP^k(c(\Pi))$. 

Then $\Theta$ can be described in combinatorial terms as the map 
\[
\Theta \colon PP^k_{\Sigma}\left(N_{\R} \oplus \R_{\geq 0}\right) \longrightarrow \widehat{\CH}^k_{\T_K}(X)
\]
which sends $f$ to the equivariant arithmetic cycle $\left(f|_{N_{\R} \times \{0\}}, g_f\right)$. Here, $g_f$ is the equivariant Green function from Proposition \ref{prop:green-fun} associated to the invariant cycle induced by $f$ on $X$. 

For the inverse map, let $(\eta, g_{\eta}) \in \widehat{\CH}_{\T_K}^k(X)$, where $\eta$ is an invariant cycle and $g_{\eta}$ is an equivariant Green function for $\eta$. Then $g_{\eta}$ being an equivariant Green function for $\eta$ implies that there exists a toric model $\Pi \in R(\Sigma)$ such that 
\[
\delta_{\eta} + dd^cg_{\eta} \in PP(\Pi),
\]
i.e.~such that the above expression defines an affine piecewise polynomial functions with respect to $\Pi$.  Then the inverse map takes $(\eta,g_{\eta})$ to the piecewise polynomial function $f_{\eta} \in PP^k(c(\Pi))$ corresponding to the Zariski closure $\overline{\eta}^{\Pi}$ of $\eta$ in $\X_{\Pi}$. This is independent on the choice  of $\Pi$.

One checks that the assignments $(\eta, g_{\eta}) \mapsto f_{\eta}$ and $f \mapsto \left(f|_{N_{\R} \times \{0\}}, g_f\right)$ are inverse to each other,. This concludes the theorem.
\end{proof}
\begin{rem}\label{rem:direct-toric}
As was mentioned in the introduction, the above theorem suggests that the non-archimedean analogue of a (smooth) toric metric on a toric vector bundle is the choice of a toric model and a piecewise polynomial function on the (cone over the) polyhedral complex associated to the toric model.  In particular, for $k=1$, by considering the restriction to $N_{\R} \times \{1\}$, we see that the set $PP^1_{\Sigma}\left(N_{\R} \oplus \R_{\geq 0}\right)$ is the same as the set of piecewise affine functions on $N_{\R}$ whose recession function is piecewise linear with respect to $\Sigma$.  Hence, the above isomorphism is saying that the equivariant non-arquimedean analogue of choosing a (smooth) hermitian \emph{toric} metric on a \emph{toric} line bundle associated to a virtual support function $\psi$, is the choice of a piecewise affine function on $N_{\R}$ whose recession function agrees $\psi$.  These are the so called \emph{toric model metrics} from \cite{BPS}.
\end{rem}
\begin{cor}\label{cor:direct}
The graded algebra structure on $\varinjlim_{\X \in R(X)}\CH^*_{\T_S}\left(\X\right) \simeq PP^*_{\Sigma}\left(N_{\R} \oplus \R_{\geq 0}\right)$, which is given by multiplication of piecewise polynomial functions, induces a graded algebra structure on $\widehat{\CH}^*_{\T_K}(X)$. Thus $\Theta$ becomes an isomorphism of graded algebras.

 Moreover, if $X \to Y$ is a toric morphism of smooth projective toric varieties over $K$, we obtain a pullback map
\[
f^* \colon \widehat{\CH}^*_{\T_K}(Y) \longrightarrow \widehat{\CH}^*_{\T_K}(X),
\]
given simply by composition of functions.  

In this way, the association $X \mapsto \widehat{\CH}^*_{\T_K}(X)$ defines a contravariant functor from smooth projective toric varieties to rings of piecewise polynomial functions. 
\end{cor}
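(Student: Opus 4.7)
The plan is to transport the ring structure through the isomorphism $\Theta$ of Theorem~\ref{th:direct}. By Theorem~\ref{th:equi-pol}, for each $\Pi \in R(\Sigma)$ the equivariant operational Chow ring $\CH^*_{\T_S}(\X_\Pi/S)$ is identified with $PP^*(c(\Pi))$, and by Proposition~\ref{prop:pullback}(i) the transition map $\pi^* \colon \CH^*_{\T_S}(\X_\Pi/S) \to \CH^*_{\T_S}(\X_{\Pi'}/S)$ associated to $\Pi' \geq \Pi$ is a morphism of graded rings, since pullback of piecewise polynomial functions is restriction to the refined complex and visibly preserves both grading and product. Consequently
\[
\varinjlim_{\Pi \in R(\Sigma)} \CH^*_{\T_S}(\X_\Pi/S) \simeq PP^*_{\Sigma}(N_{\R} \oplus \R_{\geq 0})
\]
carries a canonical graded algebra structure given by pointwise multiplication, and one defines the ring structure on $\widehat{\CH}^*_{\T_K}(X)$ by requiring $\Theta$ to be a ring isomorphism. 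This proves the first assertion tautologically.

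For the pullback, a toric morphism $f \colon X_{\Sigma_X} \to Y_{\Sigma_Y}$ is induced by a $\Z$-linear map of cocharacter lattices $\varphi \colon N_X \to N_Y$ compatible with the fans. Let $\widetilde{\varphi} \colon N_{X,\R} \oplus \R_{\geq 0} \to N_{Y,\R} \oplus \R_{\geq 0}$ denote the extension by the identity on the second factor. Given $\Pi_Y \in R(\Sigma_Y)$, using that every SCR polyhedral complex admits a regular subdivision (Remark~\ref{rem:directed}) and that $R(\Sigma_X)$ is directed, one finds $\Pi_X \in R(\Sigma_X)$ such that each cone of $c(\Pi_X)$ is sent by $\widetilde{\varphi}$ into some cone of $c(\Pi_Y)$. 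Then composition with $\widetilde{\varphi}$ yields a graded ring homomorphism $PP^*(c(\Pi_Y)) \to PP^*(c(\Pi_X))$, and passing to the direct limit gives a graded ring homomorphism $f^* \colon PP^*_{\Sigma_Y}(N_{Y,\R} \oplus \R_{\geq 0}) \to PP^*_{\Sigma_X}(N_{X,\R} \oplus \R_{\geq 0})$. Transporting through $\Theta$ produces $f^* \colon \widehat{\CH}^*_{\T_K}(Y) \to \widehat{\CH}^*_{\T_K}(X)$, which by construction is composition with $\widetilde{\varphi}$.

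The main step I expect to require care is independence of the choice of compatible refinement $\Pi_X$: if $\Pi_X$ and $\Pi'_X$ are two such choices, any common refinement in $R(\Sigma_X)$ (which exists by directedness) is again compatible with $\Pi_Y$, and the three resulting pullbacks agree after passing to the direct limit. Once this is established, the functoriality identities $(gf)^* = f^* \circ g^*$ and $\on{id}^* = \on{id}$ follow immediately from the analogous properties of composition of linear maps on $N_{\R} \oplus \R_{\geq 0}$, and the assignment $X \mapsto \widehat{\CH}^*_{\T_K}(X)$ defines the claimed contravariant functor into graded rings of piecewise polynomial functions.
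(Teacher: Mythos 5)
Your proposal is correct and follows the same route the paper intends: the corollary is stated without a separate proof precisely because it is the transport of structure through $\Theta$, using that the transition maps $\pi^*$ are graded ring homomorphisms (so the direct limit $PP^*_{\Sigma}(N_{\R}\oplus\R_{\geq 0})$ is a graded algebra under pointwise multiplication), and the pullback is constructed exactly as in the paper's earlier functoriality discussion by choosing a model of $X$ compatible with a given model of $Y$ and composing with the induced linear map on $N_{\R}\oplus\R_{\geq 0}$. Your attention to independence of the compatible refinement is the right detail to flag, and your argument for it (pass to a common refinement and use directedness) is the standard and correct one.
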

\subsection{The extended equivariant arithmetic Chow group $\widecheck{\CH}_{\T_K}^*(X)$}
As before, $X = X_{\Sigma}$ denotes a smooth projective toric variety over $K$. We now define the equivariant analogue of the \emph{extended} arithmetic Chow group $\widecheck{\CH}^k(X)$ defined in \cite{GS-direct}. This allows for non-archimedean analogues of singular metrics. Here one considers, instead of pairs $(\eta, g_{\eta})$ consisting of invariant cycles on $X$ together with Green currents, pairs of the form $(\eta, g)$, where $g$ is an arbitrary current in $\widetilde{D}^{k-1,k-1}(X)$.
\begin{Def}
The \emph{extended equivariant arithmetic Chow group of $X$ of degree $k$} is defined as 
\[
\widecheck{\CH}_{\T_K}^k(X) \coloneqq \left(Z_{\T_K}^k(X) \oplus \widetilde{D}_{\T_K}^{k-1,k-1}(X)\right)/ \widehat{R}_{\T_K}^k(X).
\]
We further  set 
\[
\widecheck{\CH}_{\T_K}^*(X) \coloneqq \bigoplus_{k \in \Z} \widecheck{\CH}_{\T_K}^k(X).
\]
\end{Def}
Clearly, we have 
\[
\widehat{\CH}_{\T_K}^k(X) \subseteq \widecheck{\CH}_{\T_K}^k(X)
\]
for any integer $k$.

The maps $\zeta$, $a$ and $\omega$ given after Definition \ref{def:arith-chow1} extend to maps
\begin{enumerate}
\item[i)] 
\[
\zeta \colon \widecheck{\CH}^k_{\T_K}(X) \longrightarrow \CH^k_{\T_K}(X), \; (\eta,g) \longmapsto \eta,
\]
\item[ii)]
\[
a\colon \widetilde{D}^{k-1,k-1}_{\T_K}(X) \longrightarrow \widecheck{\CH}^k_{\T_K}(X), \; \alpha \longmapsto (0,\alpha),
\]
\item[iii)]
\[
\omega \colon  \widecheck{\CH}^k_{\T_K}(X) \longrightarrow D^{k,k}_{\closed,\T_K}(X), \; (\eta,g) \longmapsto \delta_{\eta} + dd^cg.
\]
\end{enumerate}
Then the regularity result in Proposition \ref{prop:regularity} implies that $\alpha \in \widetilde{A}^{k-1,k-1}_{\T_K}(X)$ if and only if $a(\alpha) \in \widehat{\CH}^k_{\T_K}(X)$. Also, we have that $(\eta,g) \in \widecheck{\CH}^k_{\T_K}(X)$ lies in $\widehat{\CH}^k_{\T_K}(X)$ if and only if $\omega(\eta,g) \in A^{k,k}_{\closed,\T_K}(X)$, i.e.~if and only if $\omega(\eta,g)$ is affine piecewise polynomial with respect to some $\Pi \in R(\Sigma)$.

\textbf{$\widecheck{\CH}^k_{\T_K}(X)$ as an inverse limit.}
We will define a dual map to $\Theta$, expressing $\widecheck{\CH}^k_{\T_K}(X)$ as an \emph{inverse} limit of equivariant Chow groups of toric models of $X$.

Let $\Pi \in R(\Sigma)$. We define a map
\[
\Theta_{\Pi}' \colon Z_{\T_S}^k(\X_{\Pi}) \longrightarrow Z_{\T_K}^k(X) \oplus \CH^{\T_S}_{n-k+1}\left(\X_s\right)
\]
by 
\[
\widetilde{Z}\longmapsto \left(\widetilde{Z}|_X, z_{\Pi}\right),
\]
where $z_{\Pi}$ is given by 
\[
z_{\Pi} = \widetilde{Z} - \overline{\widetilde{Z}|_X}^{\Pi},
\]
where $\overline{\widetilde{Z}|_X}^{\Pi}$ denotes the Zariski closure of $\widetilde{Z}|_X$ in $\X_{\Pi}$. Note that the difference $\widetilde{Z} - \overline{\widetilde{Z}|_X}^{\Pi}$ is supported on the special fiber, so that it indeed defines an element in $\CH_{n-k+1}^{\T_S}(\X_s)$.

As before, we have
 \[
\Theta_{\Pi}'\left(\widetilde{\chi}\left[\widetilde{W}\right]-\on{div}_{\X_{\Pi}}(f)\right) \in \widehat{R}^k_{\T_K}(X),
\]
for any invariant integral subscheme $\widetilde{W} \subseteq \X_{\Pi}$ of codimension $k-1$ and any invariant rational function $f \in k(\widetilde{W})^*$ of weight $\widetilde{\chi} \in \widetilde{M}$. 

Hence, $\Theta_{\Pi}'$ induces a map 
\[
\Theta_{\Pi}' \colon \CH_{\T_S}^k(\X_{\Pi}) \longrightarrow Z_{\T_K}^k(X) \oplus \CH^{\T_S}_{n-k+1}\left(\X_{\Pi,s}\right).
\]
Moreover, if $\Pi' \geq \Pi$, then it follows from the combinatorial description of the pushforward map ${\pi_s}_*$ (see Proposition~\ref{prop:com-forms-currents}),  that ${\pi_s}_*(z_{\Pi'}) = z_{\Pi}$. Hence, the tuple $\left(z_{\Pi'}\right)_{\Pi' \in R(\Sigma)}$ defines an element in $\widetilde{D}_{\T_K}^{k-1,k-1}(X)$. We obtain a commutative diagram 
%\begin{figure}[H]
\begin{center}
    \begin{tikzpicture}
      \matrix[dmatrix] (m)
    { \CH^k_{\T_S}(\X_{\Pi'}) & & \CH^k_{\T_S}(\X_{\Pi})  \\
      & \widecheck{\CH}^k_{\T_K}(X) &\\
      };
      \draw[->] (m-1-1) to node[left]{$\Theta_{\Pi}'$}(m-2-2);
    \draw[->] (m-1-1) to node[above]{$\pi_*$}(m-1-3);
     \draw[->] (m-1-3) to node[right]{$\Theta_{\Pi}'$}(m-2-2);

     \end{tikzpicture}
     \end{center}
   %  \end{figure}
and therefore a map on the inverse limit
\[
\Theta' \colon \varprojlim_{\Pi \in R(\Sigma)} \CH^k_{\T_S}(\X_{\Pi}) \longrightarrow \widecheck{\CH}^k_{\T_K}(X).
\]
\begin{theorem}\label{th:inverse}
The map $\Theta'$ defined above is an isomorphism. 
\end{theorem}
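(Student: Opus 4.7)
The plan is to mimic the proof strategy of Theorem~\ref{th:direct}, constructing an explicit inverse to $\Theta'$ and verifying that the two compositions are the identity. Using the identification of $\widetilde{D}^{k-1,k-1}_{\T_K}(X)$ with the inverse limit $\varprojlim_{\Pi'} \CH^{\T_S}_{n-k+1}(\X_{\Pi',s}/S)$ from Definition~\ref{def:forms&currents}, we may write any current $g \in \widetilde{D}^{k-1,k-1}_{\T_K}(X)$ as a compatible system $(g_{\Pi'})_{\Pi' \in R(\Sigma)}$. For $(\eta,g) \in \widecheck{\CH}_{\T_K}^k(X)$ I would define a candidate inverse by
\[
\Phi(\eta,g) \;:=\; \Bigl(\bigl[\overline{\eta}^{\Pi'}\bigr] + \iota_{\Pi',*}(g_{\Pi'})\Bigr)_{\Pi' \in R(\Sigma)},
\]
where $\iota_{\Pi'} \colon \X_{\Pi',s} \hookrightarrow \X_{\Pi'}$ is the inclusion of the special fiber and the right hand side is viewed in $\CH^k_{\T_S}(\X_{\Pi'}/S)$ via the Poincar\'e duality established in Theorem~\ref{th:dual2}.

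Three things need to be verified. First, that $\Phi(\eta,g)$ is genuinely an element of the inverse limit: for any morphism $\pi\colon \X_{\Pi''}\to \X_{\Pi'}$ of toric models we need $\pi_*\bigl([\overline{\eta}^{\Pi''}] + \iota_{\Pi'',*}(g_{\Pi''})\bigr) = [\overline{\eta}^{\Pi'}] + \iota_{\Pi',*}(g_{\Pi'})$. The Zariski-closure term is compatible because $\pi$ is proper and birational, and the other term is compatible by the very definition of the $(g_{\Pi'})$ as a compatible system under ${\pi_s}_*$, together with the commutativity $\pi_*\circ \iota_{\Pi'',*} = \iota_{\Pi',*}\circ {\pi_s}_*$. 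Second, that $\Phi$ descends to $\widecheck{\CH}^k_{\T_K}(X)$: for each generator $\widehat{\on{div}(f)} = (\chi[W]-\on{div}(f), -\on{div}_\nu(f))$ of $\widehat{R}^k_{\T_K}(X)$ one must show that at every level $\Pi'$ the class
\[
\bigl[\overline{\chi[W]-\on{div}(f)}^{\Pi'}\bigr] - \iota_{\Pi',*}\bigl(\on{div}_\nu(f)_{\Pi'}\bigr)
\]
vanishes in $\CH^k_{\T_S}(\X_{\Pi'}/S)$. Expanding $\on{div}_\nu(f)_{\Pi'} = \overline{\on{div}(f)}^{\Pi'} - \on{div}_{\Pi'}(f)$ and invoking the equivariant rational equivalence of Remark~\ref{rem:rel-inv}, which identifies $\on{div}_{\overline{W}^{\Pi'}}(f)$ with $\widetilde{\chi}[\overline{W}^{\Pi'}]$ for the canonical lift $\widetilde{\chi}\in\widetilde{M}$ of $\chi$, reduces this to the Poincar\'e--Lelong relation at model level already used in the proof of Proposition~\ref{prop:green-fun}. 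This is the main technical obstacle, and I expect the careful bookkeeping of the vertical correction term (the difference between $\widetilde{\chi}$ and $\chi$, which records the valuation of~$f$) to be the delicate point.

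Granted well-definedness, the two compositions are quick to verify. For $\Theta'\circ \Phi$, applied to $(\eta,g)$, the first component $\bigl([\overline{\eta}^{\Pi'}] + \iota_{\Pi',*}(g_{\Pi'})\bigr)\big|_X$ equals $\eta$ because $\iota_{\Pi',*}(g_{\Pi'})$ is supported on the special fiber and thus restricts to zero on the generic fiber, while the second component is by construction $[\overline{\eta}^{\Pi'}] + \iota_{\Pi',*}(g_{\Pi'}) - \overline{\eta}^{\Pi'} = \iota_{\Pi',*}(g_{\Pi'})$, which corresponds to $g_{\Pi'}$ under the identification used above. Conversely, for a compatible system $(\widetilde{Z}_{\Pi'})$ in $\varprojlim_{\Pi'}\CH^k_{\T_S}(\X_{\Pi'}/S)$, the generic fiber $\eta := \widetilde{Z}_{\Pi'}|_X$ is independent of $\Pi'$ by compatibility under birational pushforward, and then
\[
\Phi\bigl(\Theta'(\widetilde{Z})\bigr)_{\Pi'} \;=\; \bigl[\overline{\eta}^{\Pi'}\bigr] + \bigl(\widetilde{Z}_{\Pi'} - \overline{\eta}^{\Pi'}\bigr) \;=\; \widetilde{Z}_{\Pi'},
\]
so $\Phi\circ \Theta' = \on{id}$. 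Combined with the earlier steps, this yields the theorem.
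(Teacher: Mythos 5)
Your proposal is correct and follows essentially the same route as the paper: both construct the explicit inverse $(\eta,g)\mapsto\bigl(\overline{\eta}^{\Pi}+\iota_{\Pi,*}(g_{\Pi})\bigr)_{\Pi}$ using Poincar\'e duality and the compatibility of the $g_{\Pi}$ under ${\pi_s}_*$, and then verify that the two compositions are the identity. You are somewhat more explicit than the paper about checking that this inverse descends through the relations $\widehat{R}^k_{\T_K}(X)$ via the model-level Poincar\'e--Lelong identity, a point the paper leaves implicit in its ``one easily checks'' step.
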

\begin{proof}
An element $f \in \varprojlim_{\Pi \in R(\Sigma)} \CH^k_{\T_S}(\X_{\Pi})$ can be seen as a tuple $f = \left(f_{\Pi}\right)_{\Pi \in R(\Sigma)}$ with $f_{\Pi} \in PP^k(c(\Pi))$ and compatible under the pushforward map $\pi_*$. In particular, the restriction $f_{\Pi}|_{N_{\R} \times \{0\}}$ (which coincides with the pushforward map to the canonical model $\X_{\Sigma}$) is the same for any $\Pi \in R(\Sigma)$. We denote it by $f_{\Sigma}$. Then the map $\Theta'$ sends $f$ to 
\[
\left(f_{\Sigma}, z\right),
\]
where $z = \left(z_{\Pi}\right)_{\Pi \in R(\Sigma)} \in \widetilde{D}_{\T_K}^{k-1,k-1}(X)$ is given by 
\[
z_{\Pi} = Z_{f_{\Pi}}-\overline{Z_{f_{\Sigma}}}^{\Pi} \in \CH_{n-k+1}^{\T_S}(\X_{\Pi,s}).
\]
Here, we have denote by $Z_p$ the invariant cycle associated to a piecewise polynomial function $p$ and, as always, $\overline{(\cdot)}^{\Pi}$ denotes the Zariski closure in $\X_{\Pi}$. 

We can now define an inverse map 

\[
 \widecheck{\CH}^k_{\T_K}(X) \longrightarrow \varprojlim_{\Pi \in R(\Sigma)} \CH^k_{\T_S}(\X_{\Pi})
 \]
by 
\[
(\eta,g) \longmapsto \left(h_{\Pi}\right)_{\Pi \in R(\Sigma)},
\]
where $h_{\Pi} \in PP^k(c(\Pi))$ is the piecewise polynomial function associated to the cycle 
\[
\overline{\eta}^{\Pi} +\iota_*(g_{\Pi}).
\]  
Here,  $g = (g_{\Pi})_{\Pi \in R(\Sigma)} \in \widetilde{D}_{\T_K}^{k-1,k-1}(X)$ with $g_{\Pi} \in \CH_{n-k+1}^{\T_S}(\X_{\Pi,s})$ and $\iota \colon \X_{\Pi,s} \hookrightarrow \X_{\Pi}$ is the inclusion of the special fiber. 

One checks that the associations $f \mapsto \left(f|_{\Sigma}, z\right)$ and $(\eta,g) \mapsto \left(h_{\Pi}\right)_{\Pi \in R(\Sigma)}$ are inverse to each other. This concludes the theorem.

\end{proof}
\begin{rem} Note that the restriction of the map $\Theta'$ and its inverse to $\widehat{\CH}^*_{\T_K}(X)$ coincides with the map $\Theta$ and its inverse from the proof of Theorem \ref{th:direct}, respectively. 
\end{rem}
\begin{exa}
\begin{enumerate}
\item Let $g = \left(g_{\Pi}\right)_{\Pi \in R(\Sigma)} \in \widetilde{D}_{\T_K}^{k-1,k-1}(X)$. Then the isomorphism $\Theta'$ sends $\left(\iota_*(\eta_{\X})\right)_{\Pi} \in \varprojlim_{\Pi \in R(\Sigma)}\CH^k_{\T_S}(\X_{\Pi})$ to $a(g) = (0,g)$.
\item If $\eta \in Z^k_{\T_K}(X)$, then the isomorphism $\Theta'$ sends $\left(\overline{\eta}^{\Pi}\right)_{\Pi \in R(\Sigma)}$ to the class of $(\eta, 0)$.
\end{enumerate}
\end{exa}
\begin{rem}\label{rem:proj}
\begin{enumerate}
\item Given the isomorphism $\theta'$, we can view an element in $\widecheck{\CH}_{\T_K}^*(X)$ as a function 
\[
f \colon N_{\Q}\oplus \R \longrightarrow \R,
\]
whose restriction to $N_{\Q}\times \{0\}$ is an element in $PP^k(\Sigma)$.  We would like to characterize all such functions arising in this way. 
\item Consider the case $k=1$. Then, as was mentioned in the introduction, we could interpret the isomorphism $\Theta'$ as saying that the non-archimedean analogue of a (singular) hermitian toric metric on a toric line bundle associated to a virtual support function $\psi$ is the choice of a function $f \colon N_{\Q}\oplus \R \to \R$ whose restriction to $N_{\Q} \times \{0\}$ is $\psi$. This extends the class of toric metrics considered in \cite{BPS} (see Proposition~4.3.10 in \emph{loc.~cit.}). 

\item Note that $\widecheck{\CH}_{\T_K}^*(X)$ is not a ring (the pushforward map $\pi_*$ is only a group homomorphism). It is however an $\widehat{\CH}_{\T_K}^*(X)$-module by considering the product with piecewise polynomial functions. Moreover, it seems possible to define an intersection pairing on a suitable subset of ``positive'' extended arithmetic cycles.  We will pursue these ideas in the future. 
\end{enumerate}
\end{rem}

\printbibliography

\end{document}